\title{Real and quaternionic second-order free cumulants and connections to matrix cumulants}
\author{C.\ E.\ I.\ Redelmeier}

\documentclass[11pt]{article}

\usepackage{graphicx}
\usepackage{graphics}
\usepackage{amsfonts}
\usepackage{amssymb}
\usepackage{amsthm}
\usepackage{amsmath}
\usepackage{color}
\usepackage{cite}
\usepackage{enumitem}

\hyphenation{Wis-hart Hil-bert non-stand-ard}

\newtheorem{theorem}{Theorem}[section]
\newtheorem{lemma}[theorem]{Lemma}
\newtheorem{proposition}[theorem]{Proposition}

\theoremstyle{remark}

\newtheorem{example}[theorem]{Example}

\theoremstyle{definition}
\newtheorem{definition}[theorem]{Definition}

\begin{document}

\maketitle

\begin{abstract}
We present definitions for real and quaternionic second-order free cumulants, functions whose collective vanishing when applied to elements from different subalgebras is equivalent to the real (resp.\ quaternionic) second-order freeness of those subalgebras.

We show the connection between second-order free cumulants and the topological expansion interpretation of matrix cumulants.  This provides a construction for higher-order free cumulants.  Coefficients are given in terms of the asymptotics of the cumulants of the Weingarten function.
\end{abstract}

\section{Introduction}

The connection between random matrices and free probability was first presented in \cite{MR1094052}.  This type of construction was extended to the fluctuations and higher-order statistics of random matrices in \cite{MR2216446, MR2294222, MR2302524} in the complex case and in \cite{MR3217665, 2015arXiv150404612R} in the real and quaternionic cases.  (Unlike in the first-order case, where the asymptotic behaviour of random matrices is identical in all three cases, the asymptotic higher-order behaviour depends on whether the matrices are real, complex, or quaternionic, or more accurately on whether the probability distribution of the random matrix is orthogonally invariant, unitarily invariant, or symplectically invariant, respectively.)

The free cumulants of Speicher (see, e.g.\ \cite{MR2266879}) are quantities which, when applied to mutually free elements will vanish.  They can thus be used to test for freeness.  They can also be used to compute moments of elements generated by free subalgebras when the moments are known on each subalgebra.

The matrix cumulants of \cite{MR2240781, MR2337139, MR2483727} provide a similar construction for finite-dimensional random matrices.  These are constructed in terms of a convolution as well as in terms of a geometric projection.  These quantities are multiplicative over algebras of random matrices which are independent and unitarily (respectively orthogonally, symplectically) invariant (this definition also depends on the symmetry of the random matrix's probability distribution), and can thus be used to compute moments of expressions in the algebra generated by ensembles of independent matrices in general position if the moments, and hence the matrix cumulants, of the individual ensembles are known.  In these papers, it is shown that the asymptotic values of the matrix cumulants are the free cumulants of the limit distribution of the random matrices.

\begin{figure}
\centering
\begin{tabular}{c}
\input{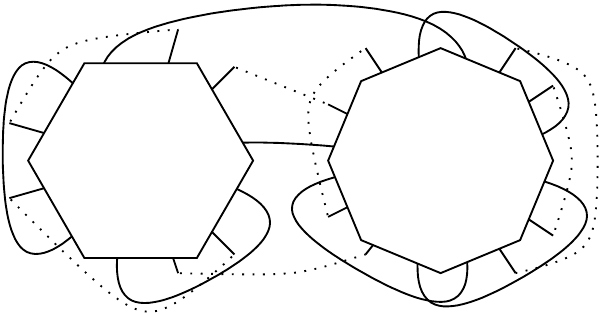_t}\\
\input{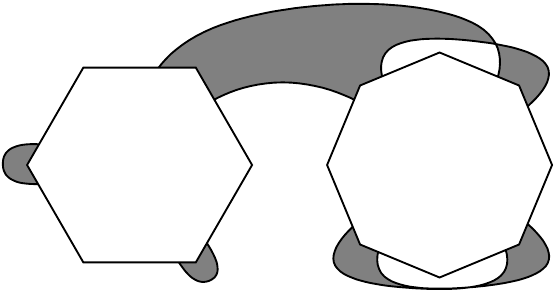_t}
\end{tabular}
\caption{A summand in the topological expansion of an expression with Haar-distributed orthogonal matrces \(O\) and arbitrary orthogonally invariant random matrices \(X_{1},\ldots,X_{7}\) (top), and the corresponding surface gluing in the topological expansion for the \(X_{1},\ldots,X_{7}\) (bottom).}
\label{figure: orthogonally invariant}
\end{figure}

\begin{figure}
\centering
\begin{tabular}{cc}
\input{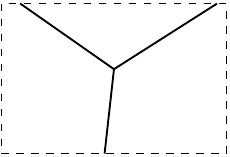_t}&\input{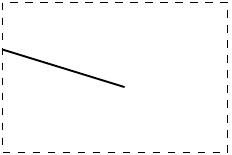_t}\\
\input{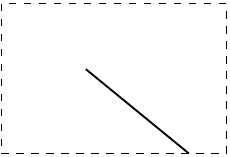_t}&\input{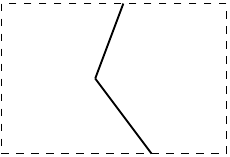_t}
\end{tabular}
\caption{The vertices which appear on the surface constructed in Figure~\ref{figure: orthogonally invariant}, bottom.}
\label{figure: vertices}
\end{figure}

The connection between matrix cumulants and topological expansion is described in \cite{MR3217665, 2015arXiv151101087R}.  If \(X_{1},\ldots,X_{7}\) are random matrices with orthogonally invariant distributions, a matrix integral which is a product of traces, such as, for example,
\begin{equation}
\mathbb{E}\left(\mathrm{tr}\left(X_{1}X_{2}X_{3}\right)\mathrm{tr}\left(X_{4}X_{5}X_{6}X_{7}\right)\right)
\label{formula: example moment}
\end{equation}
may be computed by summing over surfaces glued from faces corresponding to the traces (see Figure~\ref{figure: orthogonally invariant}).  The lower diagram shows such a gluing.  The summand corresponding to this surface is
\[N^{2F-\chi}c_{\pi}\left(X_{1},\ldots,X_{7}\right)\]
where \(N\) is the dimension of the matrix, \(F\) is the number of traces in the integral, \(\chi\) is the Euler characteristic of the surface, and \(c_{\pi}\left(X_{1},\ldots,X_{7}\right)\) is the cumulant associated to the collection of four vertices which appear on the resulting surface (see Figure~\ref{figure: vertices}; note \(X_{4}^{T}\) and \(X_{7}^{T}\) which appear on the ``backs'' of the corners containing \(X_{4}\) and \(X_{7}\), respectively).

The topological expansion for orthogonally invariant matrices follows from the topological expansion for Haar-distributed orthogonal matrices \(O\), which may be expressed as a sum over pairings on each of the first and second indices of \(O\) \cite{MR2217291,MR2337139,2015arXiv151101087R}.  A matrix cumulant is the sum over pairings on the second index for a fixed choice of pairing on the first index.  (An example is shown in the top part of Figure~\ref{figure: orthogonally invariant}, with the pairing on the first indices shown in solid lines, and a pairing on the second indices marked with dotted lines.)  This cumulant corresponds to the gluing shown in the lower part of Figure~\ref{figure: orthogonally invariant}.  See Example~\ref{example: gluing} for more detail.

If any of the \(X_{k}\) are independent and orthogonally in general position, they may be conjugated by independent Haar-distributed matrices, say \(O_{1}\) and \(O_{2}\).  In this case, we do not need to consider terms in which independent Haar matrices are connected (by either index), so any term in which \(O_{1}\) and \(O_{2}\) are connected will vanish, and the contribution of the diagram will be the product of the contribution of the \(O_{1}\) vertices and the contribution of the \(O_{2}\) vertices.  Thus, independent matrices in general position can be thought of as different colours, and only gluings which respect this colouring need be considered.  For example, in Figure~\ref{figure: orthogonally invariant}, if \(X_{1}\) and \(X_{4}\) are independent and orthogonally in general position, the term shown will vanish.  The contribution of each colour is multiplicative.  If \(X_{1}\), \(X_{2}\), \(X_{4}\), and \(X_{5}\) as a set are independent from \(X_{3}\), \(X_{6}\) and \(X_{7}\), then the contribution of the lower diagram in Figure~\ref{figure: orthogonally invariant} will be the product of the contribution of the two upper vertices in Figure~\ref{figure: vertices} multiplied by the contribution of the two lower vertices.  In this way, if the matrix cumulants of independent ensembles in general position are known, it is possible to compute the matrix cumulants, and hence the moments such as (\ref{formula: example moment}), of any expression generated by those ensembles.  

The contribution is not in general, however, multiplicative over the vertices (i.e.\ the matrix cumulant is not the product of the four matrix cumulants associated to the four vertices).  In this topological interpretation, it is natural to ask what the difference is between the product of the cumulants of the vertices individually or in subsets and the cumulant of the vertices together.  A natural way to do so is similar to the construction of classical cumulants, where we consider the quantities such as the two-vertex cumulant
\[c_{V_{1},V_{2}}\left(X_{1},\ldots,X_{n}\right)-c_{V_{1}}\left(X_{1},\cdots,X_{n}\right)c_{V_{2}}\left(X_{1},\ldots,X_{n}\right)\]
where \(c_{V_{1},V_{2}}\) is the cumulant corresponding to the two vertices \(V_{1}\) and \(V_{2}\), and \(c_{V_{i}}\) is the cumulant corresponding to vertex \(V_{i}\) (see Definition~\ref{definition: vertex cumulants} for the general construction).  These quantities can be thought of as the difference of the cumulants from being multiplicative.

While these quantities do not in general vanish, if the matrices satisfy cerain convergence properties, the cumulants on a larger number of vertices are lower-order: specifically, the vertex cumulants on one vertex are \({\cal O}\left(1\right)\), while the vertex cumulants on \(V\) vertices are \({\cal O}\left(N^{2-2V}\right)\), where \(N\) is the dimension of the matrix.  In this sense, the matrix cumulants are asymptotically multiplicative.  In addition, these cumulants have the desirable property of vanishing when they are applied to independent matrices in general position.  (Since a matrix cumulant on vertices of different colours is the product of each colour, this difference from multiplicativity will vanish.)

Finally, we find that the asymptotic limit of the appropriately renormalized two-vertex cumulants are the second-order real free cumulants, quantities which vanish when applied to elements which are second-order free.

In Section~\ref{section: preliminaries}, we define notation and introduce relevant lemmas.  (More specialized notation and lemmas, in particular those pertaining to maps on nonorientable surfaces and their applications in topological expansions, and those pertaining to higher-order freeness, are gathered at the beginning of Section~\ref{section: matrix}.)  In Section~\ref{section: freeness}, we present expressions for the second-order real cumulants, and demonstrate that the vanishing of mixed cumulants is equivalent to second-order real freeness.  In Section~\ref{section: matrix} we construct the vertex cumulants from the matrix cumulants and present expressions and properties.  In Subsection~\ref{subsection: asymptotics} we discuss the asymptotic properties of the vertex cumulants.  In Subsection~\ref{subsection: second-order} we discuss the second-order case and show that the asymptotic limit of a two-vertex cumulant is a second-order cumulant.  Generalization to higher-order freeness is discussed in \ref{subsection: higher-order}.  Section~\ref{section: freeness} and Section~\ref{section: matrix} may be read somewhat independently from each other.  In Section~\ref{section: quaternion}, we outline the construction in the quaternionic case.

\section{Preliminaries}

\label{section: preliminaries}

\subsection{Partitions and permutations}

We denote the set of integers \(\left\{1,\ldots,n\right\}\) by \(\left[n\right]\) and the set of integers \(\left\{m,\ldots,n\right\}\) by \(\left[m,n\right]\).

For a finite set \(I\), we denote the number of elements in \(I\) by \(\left|I\right|\).

\begin{definition}
A {\em partition} of a set \(I\) is a set \(\left\{U_{1},\ldots,U_{k}\right\}\) of nonempty subsets of \(I\) called {\em blocks} such that \(U_{i}\cap U_{j}=\emptyset\) for \(i\neq j\) and \(U_{1}\cup\cdots\cup U_{k}=I\).  We denote the number of blocks of a partition \({\cal U}\) by \(\#\left({\cal U}\right)\).  The set of partitions of \(I\) is denoted \({\cal P}\left(I\right)\) and the set of partitions of \(\left[n\right]\) by \({\cal P}\left(n\right)\).  The partitions may be given a partial order by letting \({\cal U}\preceq{\cal V}\) when every block of \({\cal U}\) is a subset of a block of \({\cal V}\).

We define the restriction of a partition \({\cal U}=\left\{U_{1},\ldots,U_{k}\right\}\in{\cal P}\left(I\right)\) to a subset \(J\subseteq I\) by letting \(\left.{\cal U}\right|_{J}:=\left\{U_{1}\cap J,\ldots,U_{k}\cap J\right\}\) (excluding empty sets).

We denote the set of {\em pairings}, or partitions where each block contains \(2\) elements, of set \(I\) by \({\cal P}_{2}\left(I\right)\), and denote \({\cal P}_{2}\left(\left[n\right]\right)\) by \({\cal P}_{2}\left(n\right)\).
\end{definition}

\begin{definition}
We denote the set of permutations on set \(I\) by \(S\left(I\right)\), and \(S\left(\left[n\right]\right)\) by \(S\left(n\right)\).

The orbits (cycles) of a permutation form a partition of its domain.  For \(\pi\in S\left(I\right)\) we denote this partition by \(\Pi\left(\pi\right)\).  When it is clear, we will sometimes use \(\pi\) to denote this partition as well as the permutation.

We note that \(\pi^{-1}\), as well as any conjugate of \(\pi\), have the same cycle structure as \(\pi\).

For \(\pi\in S\left(I\right)\) and subset \(J\subseteq I\), the permutation induced by \(\pi\) on \(J\), denoted \(\left.\pi\right|_{J}\), is the permutation where \(\left.\pi\right|_{J}\left(a\right)=\pi^{k}\left(a\right)\), where \(k>1\) is the smallest exponent such that \(\pi^{k}\left(a\right)\in J\).  (This is the permutation obtained from \(\pi\) in cycle notation by deleting all elements not in \(J\)).

\subsection{Integer sequences}

\begin{definition}
The Catalan numbers are given by
\[C_{n}:=\frac{1}{n+1}\binom{2n}{n}\textrm{.}\]
\end{definition}

The following integers appear in \cite{MR1761777, MR1959915, MR2217291}:
\begin{definition}
Let \({\cal U},{\cal V}\in{\cal P}\left(I\right)\) with \({\cal U}\preceq{\cal V}\).  Define
\begin{multline*}
\gamma_{{\cal U},{\cal V}}:=\prod_{V\in{\cal V}}\left(-1\right)^{\left|V\right|-\#\left(\left.{\cal U}\right|_{V}\right)}\frac{2^{2\#\left(\left.{\cal U}\right|_{V}\right)-1}\left(2\left|V\right|+\#\left(\left.{\cal U}\right|_{V}\right)-3\right)!}{\left(2\left|V\right|\right)!}\\\times\prod_{U\in\left.{\cal U}\right|_{V}}\frac{\left(2\left|U\right|-1\right)!}{\left(\left|U\right|-1\right)!^{2}}\textrm{.}
\end{multline*}
\end{definition}

We note that the contribution of a block \(V\in{\cal V}\) containing only one block of \({\cal U}\) is \(\left(-1\right)^{\left|V\right|-1}C_{\left|V\right|-1}\), and the contribution of a block \(V\in{\cal V}\) containing two blocks \(U_{1},U_{2}\in{\cal U}\) is
\[\left(-1\right)^{\left|U_{1}\right|+\left|U_{2}\right|}\frac{4}{\left|U_{1}\right|+\left|U_{2}\right|}\frac{\left(2\left|U_{1}\right|-1\right)!}{\left(\left|U_{1}\right|-1\right)!^{2}}\frac{\left(2\left|U_{2}\right|-1\right)!}{\left(\left|U_{2}\right|-1\right)!^{2}}\textrm{.}\]

\subsection{Cartographic machinery}

For integers \(r_{1},\ldots,r_{k}\), we define the permutation in \(S\left(r_{1}+\cdots+r_{k}\right)\)
\[\tau_{r_{1},\ldots,r_{k}}:=\left(1,\ldots,r_{1}\right)\cdots\left(r_{1}+\cdots+r_{k-1}+1,\ldots,r_{1}+\cdots+r_{k}\right)\textrm{.}\]

\begin{definition}
Given a permutation \(\pi\), the {\em Kreweras complement} of permutation \(\rho\) with respect to \(\pi\) is
\[\mathrm{Kr}_{\pi}\left(\rho\right):=\rho^{-1}\pi\textrm{.}\]
We write \(\mathrm{Kr}_{p,q}\left(\pi\right)\) for \(\mathrm{Kr}_{\tau_{p,q}}\left(\pi\right)\), or omit the subscript if it is clear from context.
\end{definition}
(For more on the topological and graph theoretical interpretation of this and the related constructions, see e.g.\ \cite{MR1813436, MR0404045, MR2036721, 2012arXiv1204.6211R}.)

The Euler characteristic of \(\pi\) (on \(\rho\)) is
\[\chi_{\rho}\left(\pi\right):=\#\left(\rho\right)+\#\left(\pi\right)+\#\left(\mathrm{Kr}_{\rho}\left(\pi\right)\right)-n\textrm{.}\]
The Euler characteristic is less than or equal to \(2\) times the number of connected components:
\[\chi_{\rho}\left(\pi\right)\leq 2\#\left(\Pi\left(\pi\right)\vee\Pi\left(\rho\right)\right)\]
(see \cite{MR2052516}).
A permutation is said to be {\em noncrossing} (on \(\rho\)) if
\[\chi_{\rho}\left(\pi\right)=2\#\left(\Pi\left(\pi\right)\vee\Pi\left(\rho\right)\right)\textrm{.}\]
We denote the set of \(\pi\) noncrossing on \(\rho\) by \(S_{\mathrm{nc}}\left(\rho\right)\), and \(S_{\mathrm{nc}}\left(p,q\right):=S_{\mathrm{nc}}\left(\tau_{p,q}\right)\).  We denote the set of \(\pi\) such that \(\Pi\left(\pi\right)\preceq\Pi\left(\rho\right)\) by \(S_{\mathrm{disc-nc}}\left(\rho\right)\) (and let \(S_{\mathrm{disc-nc}}\left(p,q\right):=S_{\mathrm{disc-nc}}\left(\tau_{p,q}\right)\)).  We denote the \(\pi\in S_{\mathrm{nc}}\left(p,q\right)\) such that at least one cycle of \(\pi\) contains elements from both \(\left[p\right]\) and \(\left[p+1,p+q\right]\) by \(S_{\mathrm{ann-nc}}\left(p,q\right)\).

A poset may be constructed on \(S_{\mathrm{nc}}\left(\tau\right)\) by letting \(\pi\preceq\rho\) if \(\Pi\left(\pi\right)\preceq\Pi\left(\rho\right)\) and \(\pi\in S_{\mathrm{nc}}\left(\rho\right)\).  If \(\tau\) has only one cycle, then this poset is the same as the poset on the \(\Pi\left(\rho\right)\).  This poset is a lattice (any two elements \(\pi\) and \(\rho\) have a unique \(\sup\) \(\pi\vee\rho\) and \(\inf\) \(\pi\wedge\rho\)).  It is also self-dual.  (See \cite{MR2266879}, Chapters~9 and 10.)
\end{definition}

In the cases where \(\tau\) has \(1\) or \(2\) cycles, \(\pi\) being noncrossing is equivalent to a number of conditions on a fixed number of elements not occurring.

\begin{lemma}[Biane]
Let \(\tau\in S\left(I\right)\) have one cycle.  Then \(\pi\in S_{\mathrm{nc}}\left(\tau\right)\) iff neither of the following conditions occurs:
\begin{enumerate}
	\item There are \(a,b,c\in I\) such that \(\left.\tau\right|_{\left\{a,b,c\right\}}=\left(a,b,c\right)\) and \(\left.\pi\right|_{\left\{a,b,c\right\}}=\left(a,c,b\right)\).
	\item There are \(a,b,c,d\in I\) such that \(\left.\tau\right|_{\left\{a,b,c,d\right\}}=\left(a,b,c,d\right)\) and \(\left.\pi\right|_{\left\{a,b,c,d\right\}}=\left(a,c\right)\left(b,d\right)\).
\end{enumerate}
\label{lemma: disc noncrossing}
\end{lemma}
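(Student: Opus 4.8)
\emph{Proof strategy.}  By conjugation invariance we may take \(\tau=\tau_n:=(1,\ldots,n)\): every one-cycle is conjugate to \(\tau_n\), and conjugating \(\tau\) and \(\pi\) by a common permutation preserves cycle types, induced permutations and the Kreweras complement (hence \(\chi_\tau(\pi)\) and membership in \(S_{\mathrm{nc}}(\tau)\)), as well as the two stated conditions.  Because \(\Pi(\pi)\vee\Pi(\tau_n)\) consists of a single block, \(\pi\in S_{\mathrm{nc}}(\tau_n)\) is exactly the equality \(\#(\pi)+\#(\pi^{-1}\tau_n)=n+1\), and we also record the a priori bound \(\#(\pi)+\#(\pi^{-1}\tau_n)\le n+1\) coming from \(\chi_\rho(\pi)\le 2\#(\Pi(\pi)\vee\Pi(\rho))\).

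The technical heart is a point-deletion lemma.  For \(a\in[n]\), set \(I'=[n]\setminus\{a\}\), \(\tau'=\tau_n|_{I'}\) (an \((n-1)\)-cycle) and \(\pi'=\pi|_{I'}\); then \(\chi_{\tau'}(\pi')\ge\chi_{\tau_n}(\pi)\), with equality whenever \(a\) is a fixed point of \(\pi\) or of \(\pi^{-1}\tau_n\).  We would prove this by cases.  If \(a\) is fixed by \(\pi\), then \(\#(\pi')=\#(\pi)-1\) and one checks that \((\pi')^{-1}\tau'=(\pi^{-1}\tau_n)|_{I'}\), which has the same number of cycles as \(\pi^{-1}\tau_n\); hence \(\chi\) is unchanged.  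The case of a fixed point of \(\pi^{-1}\tau_n\) is symmetric via \(\chi_{\tau_n}(\pi)=\chi_{\tau_n}(\pi^{-1}\tau_n)\).  If \(a\) is fixed by neither, then \(\#(\pi')=\#(\pi)\) and \(\#((\pi^{-1}\tau_n)|_{I'})=\#(\pi^{-1}\tau_n)\), while \((\pi')^{-1}\tau'\) differs from \((\pi^{-1}\tau_n)|_{I'}\) by the single transposition \((\pi^{-1}(a),\pi^{-1}(\tau_n(a)))\), so \(\#((\pi')^{-1}\tau')=\#(\pi^{-1}\tau_n)\pm1\) and \(\chi_{\tau'}(\pi')\in\{\chi_{\tau_n}(\pi),\chi_{\tau_n}(\pi)+2\}\).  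Since \(\chi\le2\) always, deleting any single point preserves membership in \(S_{\mathrm{nc}}\); iterating, \(\pi\in S_{\mathrm{nc}}(\tau_n)\) forces \(\pi|_J\in S_{\mathrm{nc}}(\tau_n|_J)\) for every \(J\subseteq[n]\).

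Granting this, the implication ``non-crossing \(\Rightarrow\) no forbidden pattern'' is immediate: the computations \(\#((a,c,b))+\#((a,c,b)^{-1}(a,b,c))=2<4\) and \(\#((a,c)(b,d))+\#(((a,c)(b,d))^{-1}(a,b,c,d))=3<5\) show each pattern is a crossing configuration on its subset, so if \(\pi\) realized a pattern on a subset \(J\) then \(\pi|_J\notin S_{\mathrm{nc}}(\tau_n|_J)\), contradicting non-crossing of \(\pi\).  For the converse we argue the contrapositive by induction on \(n\): a \(\pi\) crossing on \(\tau_n\) realizes one of the patterns.  The cases \(n\le4\) are finite checks (\(n\le2\): no crossing permutation; \(n=3\): only \((1,3,2)\); \(n=4\): every crossing permutation either restricts to an \((a,c,b)\) on a \(3\)-subset, or equals \((1,3)(2,4)\)).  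For \(n\ge5\) we claim there is an \(a\) with \(\pi'\) still crossing on \(\tau'\); then induction supplies a pattern of \(\pi'\) on some \(J\subseteq I'\), and since \((\pi|_{I'})|_J=\pi|_J\) and \((\tau_n|_{I'})|_J=\tau_n|_J\), the same \(J\) is a pattern of \(\pi\).  To find \(a\): if \(\pi\) or \(\pi^{-1}\tau_n\) has a fixed point, delete it — the lemma keeps \(\chi\), hence ``crossing''.  Otherwise both are fixed-point-free, so any deletion changes \(\chi\) by \(0\) or \(+2\): if \(\chi_{\tau_n}(\pi)\le-2\) any \(a\) works, and if \(\chi_{\tau_n}(\pi)=0\) then \(\#(\pi^{-1}\tau_n)=n-1-\#(\pi)\ge n-1-\lfloor n/2\rfloor\ge2\), so the \(n\)-cycle \(\mu:=\pi^{-1}\tau_n\pi\) cannot setwise fix every cycle of \(\pi^{-1}\tau_n\) (a single cycle has no proper invariant subset); choosing \(b\) so that \(b\) and \(\mu(b)=\pi^{-1}(\tau_n(\pi(b)))\) lie in different cycles of \(\pi^{-1}\tau_n\) and putting \(a=\pi(b)\), the transposition of the lemma merges two cycles, so \(\chi_{\tau'}(\pi')=0\) and \(\pi'\) is crossing.

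The step we expect to absorb most of the work is the point-deletion lemma, in particular identifying the exact transposition relating \((\pi|_{I'})^{-1}(\tau_n|_{I'})\) to \((\pi^{-1}\tau_n)|_{I'}\), and verifying that whether it merges or splits cycles is governed by whether \(\pi^{-1}(a)\) and \(\pi^{-1}(\tau_n(a))\) lie in the same cycle of \(\pi^{-1}\tau_n\); the rest is routine bookkeeping.  Alternatively, the deletion inequality can be quoted as the ``genus is non-increasing under restriction'' principle for permutation factorizations (in the spirit of \cite{MR2052516}), after which the converse follows from the base cases \(n\le4\) and the reduction above.
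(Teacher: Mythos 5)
Your argument is correct, but it is worth noting that the paper does not prove this lemma at all: it is quoted from Biane \cite{MR1475837}, so any comparison is with the literature rather than with the text. The standard route (Biane; see also \cite{MR2266879}) goes through the identification of \(S_{\mathrm{nc}}\left(\tau_{n}\right)\) with the geodesics \(\left|\pi\right|+\left|\pi^{-1}\tau_{n}\right|=\left|\tau_{n}\right|\) in the Cayley graph and the resulting lattice isomorphism with \(NC\left(n\right)\), after which the pattern characterisation is inherited from the classical one for noncrossing partitions together with the orientation condition on each block. Your point-deletion induction is a genuinely different and essentially self-contained alternative, and the parts I checked are sound: the identity \(\left(\left.\pi\right|_{I^{\prime}}\right)^{-1}\left(\left.\tau_{n}\right|_{I^{\prime}}\right)=\left(\pi^{-1}\left(a\right),\pi^{-1}\left(\tau_{n}\left(a\right)\right)\right)\cdot\left(\left.\pi^{-1}\tau_{n}\right|_{I^{\prime}}\right)\) does hold when \(a\) is fixed by neither \(\pi\) nor \(\pi^{-1}\tau_{n}\) (and degenerates to equality of the two sides in the fixed-point cases), the merge-versus-split criterion is as you state, and the invariant-subset argument for the \(n\)-cycle \(\mu\) correctly produces a deletion preserving \(\chi=0\). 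Two small points you should make explicit in a full write-up: the case split ``\(\chi\leq-2\) or \(\chi=0\)'' silently uses that \(\chi_{\tau_{n}}\left(\pi\right)\) is always even (a one-line sign computation, since \(\mathrm{sgn}\left(\pi\right)\mathrm{sgn}\left(\pi^{-1}\tau_{n}\right)=\left(-1\right)^{n-1}\)) — although \(\chi\leq-1\) would in fact also suffice for ``any \(a\) works''; and the \(n=4\) base case is a real, if short, enumeration (ten crossing permutations, each of which either restricts to an \(\left(a,c,b\right)\) on a triple or equals \(\left(1,3\right)\left(2,4\right)\)). What your approach buys is elementarity — no appeal to the lattice structure of \(NC\left(n\right)\) — at the cost of the bookkeeping in the deletion lemma; what the standard approach buys is that the same machinery simultaneously yields the M\"{o}bius function and self-duality used elsewhere in the paper.
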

See \cite{MR1475837}.

\begin{lemma}[Mingo, Nica]
Let \(\tau:=\tau_{p,q}\).

The annular nonstandard conditions are:
\begin{enumerate}
	\item There are \(a,b,c\in\left[p+q\right]\) with \(\left.\tau\right|_{\left\{a,b,c\right\}}\left(a,b,c\right)\) and \(\left.\pi\right|_{\left\{a,b,c\right\}}\left(a,c,b\right)\).
\label{item: ans1}
	\item There are \(a,b,c,d\in\left[p+q\right]\) with \(\left.\tau\right|_{\left\{a,b,c,d\right\}}\left(a,b\right)\left(c,d\right)\) and \(\left.\pi\right|_{\left\{a,b,c,d\right\}}\left(a,c,b,d\right)\).
\label{item: ans2}
\end{enumerate}

For \(x,y\) in different cycles of \(\tau\), we let
\[\lambda_{x,y}:=\left(\tau\left(x\right),\tau^{2}\left(x\right),\ldots,\tau^{-1}\left(x\right),\tau\left(y\right),\tau^{2}\left(y\right),\ldots,\tau^{-1}\left(y\right)\right)\textrm{.}\]
The annular noncrossing conditions are:
\begin{enumerate}
	\item There are \(a,b,c,d\in\left[p+q\right]\) with \(\left.\tau\right|_{\left\{a,b,c,d\right\}}\left(a,b,c,d\right)\) and \(\left.\pi\right|_{\left\{a,b,c,d\right\}}=\left(a,c\right)\left(b,d\right)\).
\label{item: ac1}
	\item There are \(a,b,c,x,y\in\left[p+q\right]\) such that \(\left.\lambda_{x,y}\right|_{\left\{a,b,c\right\}}=\left(a,b,c\right)\) and \(\left.\pi\right|_{\left\{a,b,c,x,y\right\}}=\left(a,c,b\right)\left(x,y\right)\).
\label{item: ac2}
	\item There are \(a,b,c,d,x,y\in\left[p+q\right]\) such that \(\left.\lambda_{x,y}\right|_{\left\{a,b,c,d\right\}}=\left(a,b,c,d\right)\) and \(\left.\pi\right|_{\left\{a,b,c,d,x,y\right\}}=\left(a,c\right)\left(b,d\right)\left(x,y\right)\).
\label{item: ac3}
\end{enumerate}

Then \(\pi\in S_{\mathrm{ann-nc}}\left(p,q\right)\) iff none of the annular nonstandard or annular crossing conditions occurs.
\end{lemma}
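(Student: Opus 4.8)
The plan is to pass from the global statement ``\(\pi\) is noncrossing on \(\tau_{p,q}\)'' to the non-occurrence of the listed local conditions, using Biane's Lemma~\ref{lemma: disc noncrossing} as the disc input. Since \(S_{\mathrm{ann-nc}}(p,q)\) consists by definition of the permutations noncrossing on \(\tau_{p,q}\) that possess a bridge, and since avoidance of the conditions ans1 and ac1 is readily seen to be equivalent to \(\pi|_{[p]}\) and \(\pi|_{[p+1,p+q]}\) each being disc-noncrossing, the content is the case in which \(\pi\) has a bridge, which I assume henceforth; then \(\#(\Pi(\pi)\vee\Pi(\tau_{p,q}))=1\), so by the definition of ``noncrossing'' membership in \(S_{\mathrm{ann-nc}}(p,q)\) is just \(\chi_{\tau_{p,q}}(\pi)=2\). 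Writing \(\delta(\pi):=2-\chi_{\tau_{p,q}}(\pi)\geq0\), I must show \(\delta(\pi)=0\) if and only if \(\pi\) avoids all five conditions.

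Two tools will carry the argument. First, the defect is monotone under deletion: deleting a point from \(\tau_{p,q}\) and from \(\pi\) cannot increase \(\delta\), so that noncrossing is hereditary under induced permutations (this is the counting lemma behind Biane's Lemma~\ref{lemma: disc noncrossing}). Second, for \(x,y\) in different cycles of \(\tau_{p,q}\) the product \((x,y)\tau_{p,q}\) is a single \((p+q)\)-cycle, the permutation it induces on \([p+q]\setminus\{x,y\}\) is exactly \(\lambda_{x,y}\), and whenever \(x\) and \(y\) lie in a common cycle of \(\pi\) one has
\[\chi_{\tau_{p,q}}(\pi)=\chi_{(x,y)\tau_{p,q}}\bigl((x,y)\pi\bigr)\textrm{,}\]
since multiplying \(\pi\) by \((x,y)\) then splits that cycle; so for such \(x,y\) the annular question for \(\pi\) becomes a disc question for \((x,y)\pi\) on a single cycle, handled by Biane's lemma. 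One also uses here that \(\bigl((x,y)\pi\bigr)|_J=(x,y)\cdot(\pi|_J)\) whenever \(\{x,y\}\subseteq J\).

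For necessity I would argue as follows. If ans1 or ac1 occurs on a set \(J\), then \(\pi|_J\) is one of Biane's forbidden disc patterns relative to the single cycle \(\tau_{p,q}|_J\), hence crossing, hence \(\pi\) is crossing on \(\tau_{p,q}\) by heredity. If ans2 occurs, then \(\tau_{p,q}|_J=(a,b)(c,d)\) and \(\pi|_J=(a,c,b,d)\), a direct computation gives \(\chi=2+1+1-4=0\) against a join of one block, so \(\pi|_J\) and hence \(\pi\) is crossing. If ac2 or ac3 occurs, the shape of the pattern forces \(x,y\) into a common cycle of \(\pi\); then, with \(\lambda_{x,y}\) restricted to \(\{a,b,c\}\) (resp.\ \(\{a,b,c,d\}\)) serving as base for \((x,y)\pi\) restricted to the same set, one obtains Biane's first (resp.\ second) disc pattern, so \((x,y)\pi\) is crossing on the single cycle \((x,y)\tau_{p,q}\), and by the displayed identity \(\chi_{\tau_{p,q}}(\pi)<2\). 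In all cases \(\pi\notin S_{\mathrm{ann-nc}}(p,q)\).

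For sufficiency I would prove the contrapositive by induction on \(p+q\): if \(\pi\) has a bridge and \(\delta(\pi)>0\), then \(\pi\) exhibits one of the five conditions. If some point in a nontrivial cycle of \(\tau_{p,q}\) can be deleted while keeping \(\delta>0\) and a bridge, I apply the inductive hypothesis on the smaller annulus and lift the resulting condition back, since all five are phrased through induced permutations. Otherwise \(\pi\) is deletion-minimal, and the core of the proof is the finite case analysis showing that such a \(\pi\) is, up to relabelling within the cyclic orders of the two circles, one of the five: if the defect already shows up within a single circle, Biane's lemma applied to \(\pi|_{[p]}\) or \(\pi|_{[p+1,p+q]}\) gives ans1 or ac1; otherwise the obstruction is genuinely annular, so I pick \(x,y\) on the two circles lying in a common cycle of \(\pi\) (possible since \(\pi\) has a bridge), pass to the disc problem for \((x,y)\pi\) on \((x,y)\tau_{p,q}\) (crossing there by the displayed identity), extract a small Biane pattern, and translate it back through \(\lambda_{x,y}\), using deletion-minimality to keep the point count at most six, obtaining ans2, ac2, or ac3. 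I expect this deletion-minimal classification to be the main obstacle: one must track simultaneously the cyclic order on each circle, the partition \(\Pi(\pi)\), and which cycles are bridges, and show that any genuinely annular obstruction localizes to at most six points; the deletion monotonicity and the \(\lambda_{x,y}\)-to-disc reduction are precisely what let this localization reduce to Biane's already-settled disc classification. The remaining pieces---the small defect computations in necessity, the heredity, and the lifting of conditions along deletions---are routine.
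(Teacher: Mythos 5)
Your overall strategy is sound and is in fact the standard one (the paper itself does not prove this lemma; it only cites Mingo and Nica). The necessity half of your plan is essentially complete and correct: the verification that \((x,y)\tau_{p,q}\) is a single \((p+q)\)-cycle inducing \(\lambda_{x,y}\) on the complement of \(\{x,y\}\), the identity \(\chi_{\tau_{p,q}}(\pi)=\chi_{(x,y)\tau_{p,q}}\bigl((x,y)\pi\bigr)\) when \(x\) and \(y\) share a cycle of \(\pi\), the direct Euler-characteristic computation for the second nonstandard condition, and the reduction of the remaining conditions to Biane's disc patterns via heredity of noncrossing under restriction all check out. Your restriction to permutations possessing a bridge is also the right reading of the statement, which is literally false for, e.g., the identity permutation without that proviso.

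The gap is in sufficiency, and you have correctly located it yourself: the claim that a deletion-minimal crossing configuration with a bridge occupies at most six points and realizes one of the five listed shapes is the entire content of the lemma, and your plan asserts rather than proves it. The step ``extract a small Biane pattern from \((x,y)\pi\) and translate it back through \(\lambda_{x,y}\)'' is not routine, because the Biane pattern may involve points lying in the \(\pi\)-cycle that contains \(x\) and \(y\); for such points \(\bigl((x,y)\pi\bigr)|_{J}\) does not coincide with \(\pi|_{J}\), and the translation then has to produce the nonstandard conditions or the first crossing condition rather than the two \(\lambda_{x,y}\)-conditions. Organizing the cases by how the Biane pattern meets that distinguished cycle is precisely the analysis you would need to write out. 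I would also suggest dropping the outer induction on \(p+q\): running the argument directly in contrapositive form --- fix \(x,y\) in a common cycle of \(\pi\) on different circles, and show that any Biane violation for \((x,y)\pi\) on the single cycle \((x,y)\tau_{p,q}\) forces one of the five conditions for \(\pi\) --- avoids having to track whether deletion preserves the bridge and the positivity of the defect, and reduces to the same case analysis with less bookkeeping.
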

See \cite{MR2052516}.

\begin{definition}
Let \(p\) and \(q\) be fixed.  For a permutation \(\pi\in S\left(p+q\right)\), we define \(\pi_{\mathrm{op}_{1}}\) to be the permutation constructed from \(\pi\) by switching \(1\) with \(p\), \(2\) with \(p-1\), and so on, in the cycle notation.  Likewire, we define \(\pi_{\mathrm{op}_{2}}\) to be the permutation constructed from \(\pi\) by switching \(p+1\) with \(p+q\), \(p+2\) with \(p+q-1\), and so on.
\label{definition: opposite}
\end{definition}
If \(\pi\in S_{\mathrm{nc}}\left(p,q\right)\), then for \(\tau:=\tau_{p,q}\), \(\pi_{\mathrm{op}_{i}}\in S_{\mathrm{nc}}\left(\tau_{\mathrm{op}_{i}}\right)\).  We note that \(\mathrm{Kr}_{\tau_{\mathrm{op}_{i}}}\left(\pi_{\mathrm{op}_{i}}\right)\) has the same cycle structure as \(\mathrm{Kr}_{p,q}\left(\pi\right)\).

\begin{definition}
We denote the set of \(\left({\cal U},\pi\right)\in{\cal P}\left(p+q\right)\times S_{p+q}\) such that \({\cal U}\succeq\Pi\left(\pi\right)\) by \(\mathit{PS}\left(p,q\right)\).  We denote by \(\mathit{PS}^{\prime}\left(p,q\right)\) the set of \(\left({\cal U},\pi\right)\in\mathit{PS}\left(p,q\right)\) such that \(\pi\in S_{\mathrm{nc}}\left(p,q\right)\), and either \({\cal U}=\Pi\left(\pi\right)\), or each block of \({\cal U}\) contains exactly one block of \(\Pi\left(\pi\right)\), except one block \(U\in{\cal U}\) which contains exactly two blocks of \(\Pi\left(\pi\right)\), one contained in \(\left[p\right]\) and the other contained in \(\left[p+1,p+q\right]\).
\end{definition}

The following construction is described in more detail in \cite{2020arXiv200309344R}, including computations of the M\"{o}bius function.  (For definitions and standard results on M\"{o}bius functions see a combinatorics text such as \cite{MR1311922}.)
\begin{definition}
We define \(S_{\mathrm{sd}}\left(p,q\right)\) as the disjoint union of \(S_{\mathrm{ann-nc}}\left(p,q\right)\) with two copies of \(S_{\mathrm{disc-nc}}\left(p,q\right)\), denoted \(S_{\mathrm{nc}}\left(p,q\right)\) and \(\hat{S}_{\mathrm{nc}}\left(p,q\right)\).  If \(\pi\in S_{\mathrm{disc-nc}}\left(p,q\right)\), we denote the same permutation in \(\hat{S}_{\mathrm{disc-nc}}\left(p,q\right)\) by \(\hat{\pi}\) (however we may nonetheless use a symbol like \(\pi\) for an element in \(\hat{S}_{\mathrm{disc-nc}}\left(p,q\right)\), and permutation operations such as \(\mathrm{Kr}\left(\pi\right)\) refer to the underlying permutation).

We define a partial order on \(S_{\mathrm{sd}}\left(p,q\right)\): if \(\pi,\rho\in S_{\mathrm{disc-nc}}\left(p,q\right)\cup S_{\mathrm{ann-nc}}\left(p,q\right)\), then \(\pi\preceq\rho\) if \(\pi\preceq\rho\) in the usual partial order on permutations, and if \(\pi\in S_{\mathrm{sd}}\left(p,q\right)\) and \(\rho\in\hat{S}_{\mathrm{disc-nc}}\left(p,q\right)\), then \(\pi\preceq\rho\) if \(\mathrm{Kr}\left(\rho\right)\succeq\mathrm{Kr}\left(\pi\right)\) in the usual partial order.

We define an order reversing bijection \(\widehat{\mathrm{Kr}}:S_{\mathrm{sd}}\left(p,q\right)\rightarrow S_{\mathrm{sd}}\left(p,q\right)\) (resp.\ \(\widehat{\mathrm{Kr}}^{-1}\)) taking \(S_{\mathrm{disc-nc}}\left(p,q\right)\) to \(\hat{S}_{\mathrm{disc-nc}}\left(p,q\right)\) and {\em vice versa}, where the underlying permutation of \(\widehat{\mathrm{Kr}}\left(\pi\right)\) is the underlying permutation of \(\mathrm{Kr}\left(\pi\right)\).

We denote the M\"{o}bius function on \(S_{\mathrm{sd}}\left(p,q\right)\) by \(\mu_{\mathrm{sd}}\).  For \(\pi,\rho\in S_{\mathrm{sd}}\left(p,q\right)\) with \(\pi\preceq\rho\), if it is not the case that \(\pi\in S_{\mathrm{disc-nc}}\left(p,q\right)\) and \(\rho\in \hat{S}_{\mathrm{disc-nc}}\left(p,q\right)\), then
\[\mu_{\mathrm{sd}}\left(\pi,\rho\right)=\prod_{U\in\Pi\left(\mathrm{Kr}_{\rho}\left(\pi\right)\right)}\left(-1\right)^{\left|U\right|-1}C_{\left|U\right|-1}\textrm{;}\]
and if \(\pi\in S_{\mathrm{disc-nc}}\left(p,q\right)\) and \(\rho\in\hat{S}_{\mathrm{disc-nc}}\left(p,q\right)\), then
\begin{multline*}
\mu_{\mathrm{sd}}\left(\pi,\rho\right)\\=\sum_{\substack{U_{1},U_{2}\in\Pi\left(\mathrm{Kr}_{\rho}\left(\pi\right)\right)\\U_{1}\in\left[p\right],U_{2}\in\left[p+1,p+q\right]}}\left(-1\right)^{\left|U_{1}\right|+\left|U_{2}\right|}\frac{2}{\left|U_{1}\right|+\left|U_{2}\right|}\frac{\left(2\left|U_{1}\right|-1\right)!}{\left(\left|U_{1}\right|-1\right)!^{2}}\frac{\left(2\left|U_{2}\right|-1\right)!}{\left(\left|U_{2}\right|-1\right)!^{2}}\\\times\prod_{U\in\Pi\left(\mathrm{Kr}_{\rho}\left(\pi\right)\right)\setminus\left\{U_{1},U_{2}\right\}}\left(-1\right)^{\left|U\right|-1}C_{\left|U\right|-1}-\prod_{U\in\Pi\left(\mathrm{Kr}_{\rho}\left(\pi\right)\right)}\left(-1\right)^{\left|U\right|-1}C_{\left|U\right|-1}\textrm{.}
\end{multline*}
\end{definition}
We note that, for any \(\pi,\rho\in S_{\mathrm{disc-nc}}\left(p,q\right)\) with \(\pi\preceq\rho\)
\[\mu_{\mathrm{sd}}\left(\pi,\rho\right)+\mu_{\mathrm{sd}}\left(\pi,\hat{\rho}\right)=\frac{1}{2}\sum_{{\cal U}:\left({\cal U},\mathrm{Kr}_{\rho}\left(\pi\right)\right)\in{\cal PS}^{\prime}\left(p,q\right)}\gamma_{\mathrm{Kr}_{\rho}\left(\pi\right),{\cal U}}\textrm{.}\]

We also note that \(\mu_{\mathrm{sd}}\left(\pi,\rho\right)=\mu_{\mathrm{sd}}\left(\pi_{\mathrm{op}_{i}},\rho\right)\) for any \(\pi,\rho\in S_{\mathrm{sd-nc}}\left(p,q\right)\).

\subsection{Freeness}

\begin{definition}
A {\em noncommutative probability space} is a unital algebra \(A\) equipped with a linear functional \(\phi_{1}:A\rightarrow\mathbb{F}\) such that \(\phi_{1}\left(1\right)=1\).

An {\em \(n\)th-order probability space} has, in addition, functionals \(\varphi_{1},\ldots,\varphi_{n}\) such that \(\varphi_{i}:A^{i}\rightarrow\mathbb{F}\) is linear in each argument and vanishes whenever any argument is \(1\).

A {\em real second-order probability space} must also have an involution \(x\mapsto x^{t}\) such that for any \(x,y\in A\), \(\left(xy\right)^{t}=y^{t}x^{t}\), and the value of a \(\varphi_{k}\left(x_{1},\ldots,x_{k}\right)\) is unchanged if any of the \(x_{i}\) is replaced with \(x_{i}^{t}\).
\end{definition}

\begin{definition}
We say that an element of a noncommutative probability space \(x\in A\) is {\em centred} if \(\varphi_{1}\left(x\right)=0\).  We denote the centred random variable \(\mathring{x}:=x-\varphi_{1}\left(x\right)1\).

If noncommutative probability space has subalgebras \(A_{1},\ldots,A_{n}\), we say that variables \(x_{1},\ldots,x_{p}\in A_{1}\cup\cdots\cup A_{n}\) are {\em alternating} if \(x_{i}\) comes from a different algebra from \(x_{i+1}\), \(i=1,\ldots,p-1\).  If in addition \(x_{p}\) comes from a different algebra from \(x_{1}\), we say that they are {\em cyclically alternating}.
\end{definition}

\begin{definition}
\label{definition: second-order freeness}
Subalgebras \(A_{1},\ldots,A_{n}\subseteq A\) of a noncommutative probability space are free if, for any \(x_{1},\ldots,x_{p}\) centred and alternating,
\[\varphi_{1}\left(x_{1}\cdots x_{p}\right)=0\textrm{.}\]

Subalgebras of a second-order probability space are real second-order free if they are free, and if, for \(x_{1},\ldots,x_{p}\) and \(y_{1},\ldots,y_{q}\) centred and cyclically alternating,
\begin{multline}
\varphi_{2}\left(x_{1},\ldots,x_{p},y_{1},\ldots,y_{q}\right)\\=\left\{\begin{array}{ll}\sum_{k=1}^{p}\prod_{i=1}^{p}\varphi_{1}\left(x_{i}y_{\tau_{p,q}^{-k}\left(i\right)}\right)+\sum_{k=1}^{p}\prod_{i=1}^{p}\varphi_{1}\left(x_{i}y_{\tau_{p,q}^{k}\left(i\right)}\right)&p=q\\0\textrm{,}&p\neq q\end{array}\right.
\label{formula: second-order freeness}
\end{multline}
(see \cite{MR3217665} for diagrams representing this formula).
\end{definition}

\begin{definition}
Let \(\left(A,\varphi\right)\) be a noncommutative probability space, and let \(x_{1},\ldots,x_{n}\in A\).  We let
\[\alpha_{n}\left(x_{1},\ldots,x_{n}\right):=\varphi_{1}\left(x_{1}\cdots x_{n}\right)\textrm{.}\]

Let \(I\subseteq\left[n\right]\), and let
\[\pi=\left(c_{1},\ldots,c_{k_{1}}\right)\cdots\left(c_{k_{1}+\cdots+k_{m-1}+1},\ldots,c_{k_{1}+\cdots+k_{m}}\right)\in{\cal P}\left(I\right)\textrm{.}\]
We let
\[\alpha_{\pi}\left(x_{1},\ldots,x_{n}\right):=\varphi_{1}\left(x_{c_{1}}\cdots x_{c_{k_{1}}}\right)\cdots\varphi_{1}\left(x_{c_{k_{1}+\cdots+k_{m-1}+1}}\cdots x_{c_{k_{1}+\cdots+k_{m}}}\right)\textrm{.}\]
\end{definition}

\begin{definition}[Speicher]
For \(n=1,2,\ldots,\) and for \(I\subseteq\left[n\right]\) and
\[\pi=\left(c_{1},\ldots,c_{k_{1}}\right)\cdots\left(c_{k_{1}+\cdots+k_{m-1}+1},\ldots,c_{k_{1}+\cdots+k_{m}}\right)\in{\cal P}\left(I\right)\textrm{.}\]
define the {\em free cumulants} \(\kappa_{n}:A^{n}\rightarrow\mathbb{C}\) and \(\kappa_{\pi}\), where
\[\kappa_{\pi}\left(x_{1},\ldots,x_{n}\right):=\kappa_{k_{1}}\left(x_{c_{1}},\ldots,x_{c_{k_{1}}}\right)\cdots\kappa_{k_{m}}\left(x_{c_{k_{1}+\cdots+k_{m-1}+1}},\ldots,x_{c_{k_{1}+\cdots+k_{m}}}\right)\]
by
\begin{equation}
\label{formula: free moment cumulant}
\alpha_{n}\left(x_{1},\ldots,x_{n}\right)=\sum_{\pi\in S_{\mathrm{nc}}\left(n\right)}\kappa_{\pi}\left(x_{1},\ldots,x_{n}\right)
\end{equation}
or equivalently (by a standard property of M\"{o}bius functions)
\begin{equation}
\label{formula: free cumulant moment}
\kappa_{n}\left(x_{1},\ldots,x_{n}\right):=\sum_{\pi\in S_{\mathrm{nc}}\left(n\right)}\mu_{\mathrm{sd}}\left(\pi,\tau_{n}\right)\alpha_{\pi}\left(x_{1},\ldots,x_{n}\right)\textrm{.}
\end{equation}
\end{definition}
If subalgebras of a free probability space are free, mixed cumulants (those whose arguments include at least one element from each of more than one subalgebra) vanish; see \cite{MR2266879}.

\section{Real second-order cumulants}

\label{section: freeness}

\begin{definition}
Let \(\left(A,\phi_{1},\phi_{2}\right)\) be a real second-order noncommutative space.

For \(p,q=1,2,\ldots\) the second-order moments \(\alpha_{p,q}:A^{p+q}\rightarrow\mathbb{C}\) are multilinear functions defined by
\[\alpha_{p,q}\left(x_{1},\ldots,x_{p},y_{1},\ldots,y_{q}\right):=\varphi_{2}\left(x_{1}\cdots x_{p},y_{1}\cdots y_{q}\right)\textrm{.}\]
For \(\left({\cal U},\pi\right)\in\mathit{PS}^{\prime}\left(p,q\right)\) with nontrivial block \(U\) and \(\left.\pi\right|_{U}=\left(c_{1},\ldots,c_{r}\right)\left(p+d_{1},\ldots,p+d_{s}\right)\), the moment over \(\left({\cal U},\pi\right)\), \(\alpha_{\left({\cal U},\pi\right)}:A^{p+q}\rightarrow\mathbb{C}\), is defined by
\begin{multline*}
\alpha_{\left({\cal U},\pi\right)}\left(x_{1},\ldots,x_{p},y_{1},\ldots,y_{q}\right)\\:=\alpha_{\left.\pi\right|_{\left[p+q\right]\setminus U}}\left(x_{1},\ldots,x_{p},y_{1},\ldots,y_{q}\right)\alpha_{r,s}\left(x_{c_{1}},\ldots,x_{c_{r}},y_{d_{1}},\ldots,y_{d_{s}}\right)\textrm{.}
\end{multline*}

We define the second-order (real) cumulants \(\kappa_{p,q}\left(x_{1},\ldots,x_{p},y_{1},\ldots,y_{q}\right)\) to be multilinear functions, notating, for \(\left({\cal U},\pi\right)\in\mathit{PS}^{\prime}\left(p,q\right)\) as above,
\begin{multline*}
\kappa_{\left({\cal U},\pi\right)}\left(x_{1},\ldots,x_{p},y_{1},\ldots,y_{q}\right)\\:=\kappa_{\left.\pi\right|_{\left[p+q\right]\setminus U}}\left(x_{1},\ldots,x_{p},y_{1},\ldots,y_{q}\right)\kappa_{r,s}\left(x_{c_{1}},\ldots,x_{c_{r}},y_{d_{1}},\ldots,y_{d_{s}}\right)\textrm{,}
\end{multline*}
such that
\begin{multline}
\alpha_{p,q}\left(x_{1},\ldots,x_{p},y_{1},\ldots,y_{q}\right)
\\=:\sum_{\alpha\in S_{\mathrm{ann-nc}}\left(p,q\right)}\kappa_{\alpha}\left(x_{1},\ldots,x_{p},y_{1},\ldots,y_{q}\right)\\+\sum_{\alpha\in S_{\mathrm{ann-nc}}\left(p,q\right)}\kappa_{\alpha}\left(x_{1},\ldots,x_{p},y_{q}^{t},\ldots,y_{1}^{t}\right)\\+\sum_{\substack{\left(U,\alpha\right)\in\mathit{PS}^{\prime}\left(p,q\right)}}\kappa_{\left(U,\alpha\right)}\left(x_{1},\ldots,x_{p},y_{1},\ldots,y_{q}\right)\textrm{;}
\label{formula: moment cumulant}
\end{multline}
or equivalently (see Lemma~\ref{lemma: moment cumulant}),
\begin{multline}
\kappa_{p,q}\left(x_{1},\ldots,x_{p},y_{1},\ldots,y_{q}\right)\\:=2\sum_{\pi\in S_{\mathrm{disc-nc}}\left(p,q\right)}\left[\mu_{\mathrm{sd}}\left(\pi,1\right)+\mu_{\mathrm{sd}}\left(\hat{\pi},1\right)\right]\alpha_{\pi}\left(x_{1},\ldots,x_{p},y_{1},\ldots,y_{q}\right)\\+\sum_{\pi\in S_{\mathrm{ann-nc}}\left(p,q\right)}\mu_{\mathrm{sd}}\left(\pi,1\right)\alpha_{\pi}\left(x_{1},\ldots,x_{p},y_{1},\ldots,y_{q}\right)\\+\sum_{\pi\in S_{\mathrm{ann-nc}}\left(p,q\right)}\mu_{\mathrm{sd}}\left(\pi,1\right)\alpha_{\pi}\left(x_{1},\ldots,x_{p},y_{q}^{t},\ldots,y_{1}^{t}\right)\\+\sum_{\left(U,\pi\right)\in\mathit{PS}^{\prime}\left(p,q\right)}\mu_{\mathrm{sd}}\left(\hat{\pi},1\right)\alpha_{\left(U,\pi\right)}\left(x_{1},\ldots,x_{p},y_{1},\ldots,y_{q}\right)\textrm{.}
\label{formula: cumulant moment}
\end{multline}
It follows from (\ref{formula: free moment cumulant}) and (\ref{formula: moment cumulant}) that for \(\left({\cal U},\pi\right)\in\mathit{PS}^{\prime}\left(p,q\right)\) as above that:
\begin{multline}
\alpha_{\left({\cal U},\pi\right)}\left(x_{1},\ldots,x_{p},y_{1},\ldots,y_{q}\right)\\=\sum_{\substack{\rho\in S_{\mathrm{ann-nc}}\left(p,q\right)\\\Pi\left(\rho\right)\preceq{\cal U}}}\kappa_{\rho}\left(x_{1},\ldots,x_{p},y_{1},\ldots,y_{q}\right)\\+\sum_{\substack{\rho\in S_{\mathrm{ann-nc}}\left(p,q\right)\\\Pi\left(\rho_{\mathrm{op}_{2}}\right)\preceq{\cal U}}}\kappa_{\rho}\left(x_{1},\ldots,x_{p},y_{q}^{t},\ldots,y_{1}^{t}\right)\\+\sum_{\substack{\left({\cal V},\rho\right)\in\mathit{PS}^{\prime}\left(p,q\right)\\{\cal V}\preceq{\cal U}}}\kappa_{\left({\cal V},\rho\right)}\left(x_{1},\ldots,x_{p},y_{1},\ldots,y_{q}\right)
\label{formula: general moment cumulant}
\end{multline}
and from (\ref{formula: free cumulant moment}) and (\ref{formula: cumulant moment}) that
\begin{multline}
\kappa_{\left({\cal U},\pi\right)}\left(x_{1},\ldots,x_{p},y_{1},\ldots,y_{q}\right)\\=2\sum_{\substack{\rho\in S_{\mathrm{disc-nc}}\left(p,q\right)\\\rho\preceq\pi}}\mu_{\mathrm{sd}}\left(\left.\rho\right|_{\left[p+q\right]\setminus U},\left.\pi\right|_{\left[p+q\right]\setminus U}\right)\\\times\left(\mu_{\mathrm{sd}}\left(\left.\rho\right|_{U},\left.1\right|_{U}\right)+\mu_{\mathrm{sd}}\left(\widehat{\left.\rho\right|_{U}},\left.1\right|_{U}\right)\right)\alpha_{\rho}\left(x_{1},\ldots,x_{p},y_{1},\ldots,y_{q}\right)\\+\sum_{\substack{\rho\in S_{\mathrm{ann-n}}\left(p,q\right)\\\Pi\left(\rho\right)\preceq{\cal U}}}\mu_{\mathrm{sd}}\left(\rho,\pi\right)\alpha_{\rho}\left(x_{1},\ldots,x_{p},y_{1},\ldots,y_{q}\right)\\+\sum_{\substack{S\in S_{\mathrm{ann-nc}}\left(p,q\right)\\\Pi\left(\rho_{\mathrm{op}_{2}}\right)\preceq{\cal U}}}\mu_{\mathrm{sd}}\left(\rho_{\mathrm{op}_{2}},\pi\right)\alpha_{\rho}\left(x_{1},\ldots,x_{p},y_{q}^{t},\ldots,y_{1}^{t}\right)\\+\sum_{\substack{\left({\cal V},\rho\right)\in\mathit{PS}^{\prime}\left(p,q\right)\\{\cal V}\preceq{\cal U}}}\mu_{\mathrm{sd}}\left(\hat{\rho},\hat{\pi}\right)\alpha_{\left({\cal V},\rho\right)}\left(x_{1},\ldots,x_{p},y_{1},\ldots,y_{q}\right)\textrm{.}
\label{formula: general cumulant moment}
\end{multline}
\end{definition}

\begin{lemma}
\label{lemma: moment cumulant}
Definitions (\ref{formula: moment cumulant}) and (\ref{formula: cumulant moment}) are equivalent.
\end{lemma}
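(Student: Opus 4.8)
The two formulas are the two directions of a single M\"{o}bius inversion on the poset $S_{\mathrm{nc-sd}}\left(p,q\right)$, whose M\"{o}bius function is computed in Proposition~\ref{proposition: mobius}; the plan is to attach generalized moments and cumulants to this poset, verify that they form a M\"{o}bius pair, and then invoke the defining summation property of $\mu$.

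First I would fix the bookkeeping.  To each $\sigma\in S_{\mathrm{nc-sd}}\left(p,q\right)$ one attaches a generalized moment $M\left(\sigma\right)$ and cumulant $K\left(\sigma\right)$.  For $\sigma\in S_{\mathrm{disc-nc}}\left(p,q\right)$ these are $\alpha_{\sigma}$ and $\kappa_{\sigma}$, the products over the cycles of $\sigma$ of first-order moments, respectively cumulants; the relation $\alpha_{\sigma}=\sum_{\rho\preceq\sigma}\kappa_{\rho}$ on such $\sigma$ is (\ref{formula: free moment cumulant}) applied cycle by cycle, its inverse is (\ref{formula: free cumulant moment}), and by the first case of Proposition~\ref{proposition: mobius} the coefficients there coincide with the M\"{o}bius function of $S_{\mathrm{nc-sd}}\left(p,q\right)$ restricted to $S_{\mathrm{disc-nc}}\left(p,q\right)$.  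For $\sigma\in S_{\mathrm{ann-nc}}\left(p,q\right)$, $K\left(\sigma\right)$ replaces the factor carried by a bridge cycle $\left(a_{1},\ldots,a_{r},b_{1},\ldots,b_{s}\right)$ by the second-order cumulant $\kappa_{r,s}\left(x_{a_{1}},\ldots,x_{a_{r}},y_{b_{1}-p},\ldots,y_{b_{s}-p}\right)$, and is recorded once for each of the two annular orientations, the second being expressed through $\sigma\mapsto\sigma_{\mathrm{op}}$ of Definition~\ref{definition: opposite} in the variables $y_{q}^{t},\ldots,y_{1}^{t}$; this is the source of the two separate $S_{\mathrm{ann-nc}}\left(p,q\right)$ sums.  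The hatted elements carry the second-order data $\alpha_{\left({\cal U},\pi\right)}$ and $\kappa_{\left({\cal U},\pi\right)}$ for $\left({\cal U},\pi\right)\in{\cal PS}^{\prime}\left(p,q\right)$, with one such pair lying over the hatted element $\hat{\pi}$ for every admissible choice of the two cycles of $\pi$ to be merged (Lemma~\ref{lemma: technical}, parts~\ref{item: outside faces} and \ref{item: partial order}).  With these conventions, (\ref{formula: moment cumulant}) together with its consequence (\ref{formula: general moment cumulant}) and the cycle-wise first-order relation assert precisely that $M\left(\rho\right)=\sum_{\sigma\preceq\rho}K\left(\sigma\right)$ for every $\rho\in S_{\mathrm{nc-sd}}\left(p,q\right)$, with $M\left(1_{p,q}\right)=\alpha_{p,q}$, so that $\kappa_{p,q}$ is the generalized cumulant attached to the top of the poset.

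Then I would invert.  The defining property of the M\"{o}bius function gives $K\left(\rho\right)=\sum_{\sigma\preceq\rho}\mu\left(\sigma,\rho\right)M\left(\sigma\right)$; taking $\rho=1_{p,q}$, inserting the values of $\mu\left(\sigma,1_{p,q}\right)$ (and, for $\sigma\in S_{\mathrm{disc-nc}}\left(p,q\right)$, of $\mu\left(\hat{\sigma},1_{p,q}\right)$) from Proposition~\ref{proposition: mobius}, and sorting the $\sigma$ into $S_{\mathrm{disc-nc}}\left(p,q\right)$, $S_{\mathrm{ann-nc}}\left(p,q\right)$, and $\hat{S}_{\mathrm{disc-nc}}\left(p,q\right)$ yields the four sums of (\ref{formula: cumulant moment}): the coefficient $2\left[\mu\left(\pi,1\right)+\mu\left(\hat{\pi},1\right)\right]$ on a disc element is its combined contribution as an element of $S_{\mathrm{disc-nc}}\left(p,q\right)$ and, through the merged-block bookkeeping, as the disc datum of ${\cal PS}^{\prime}$ pairs, the two $\mu\left(\pi,1\right)\alpha_{\pi}$ sums over $S_{\mathrm{ann-nc}}\left(p,q\right)$ are the two orientations, and the $\mu\left(\hat{\pi},1\right)\alpha_{\left(U,\pi\right)}$ sum is the hatted part; reading the same identity in the other direction recovers (\ref{formula: moment cumulant}).

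I expect the main obstacle to be checking that the conventions above really do give $M\left(\rho\right)=\sum_{\sigma\preceq\rho}K\left(\sigma\right)$ on all of $S_{\mathrm{nc-sd}}\left(p,q\right)$, and not only at $\rho=1_{p,q}$: establishing it for $\rho\in S_{\mathrm{ann-nc}}\left(p,q\right)$ and for hatted $\rho$ is what forces the ``bridge carries a second-order factor'' and orientation-doubling rules, and hence the overall factor $2$ and the doubled second case of the M\"{o}bius function in Proposition~\ref{proposition: mobius}; one must also confirm that summing $\mu\left(\hat{\pi},1\right)$ over the several ${\cal PS}^{\prime}$ pairs lying over a common hatted element reproduces the stated coefficient without overcounting.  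Each such verification reduces to the telescoping identity $\sum_{\sigma\in\left[\pi,\rho\right]}\mu\left(\sigma,\rho\right)=0$ for $\pi\prec\rho$, applied on suitable subintervals of $S_{\mathrm{nc-sd}}\left(p,q\right)$.
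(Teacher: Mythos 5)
Your overall instinct is right: the paper's proof, like yours, ultimately rests on the telescoping identity $\sum_{\sigma\in\left[\pi,1\right]}\mu\left(\sigma,1\right)=0$ applied on intervals of $S_{\mathrm{nc-sd}}\left(p,q\right)$, together with the triangularity of the two systems. But the device you build the proof around --- a single global M\"{o}bius pair $M,K$ on the poset with $M\left(\rho\right)=\sum_{\sigma\preceq\rho}K\left(\sigma\right)$ for \emph{every} $\rho$, followed by one application of abstract M\"{o}bius inversion at $\rho=1_{p,q}$ --- is not established, and with the natural assignments it cannot hold as stated. The correspondence between terms of (\ref{formula: moment cumulant}) and poset elements is many-to-one and non-uniform: each annular element is counted twice (two orientations), each hatted element is fibered over by several elements of ${\cal PS}^{\prime}\left(p,q\right)$ (one per admissible choice of merged cycles), and the unhatted disc elements contribute \emph{no} term to (\ref{formula: moment cumulant}) at all, yet their moments appear in (\ref{formula: cumulant moment}) with the coefficient $2\left[\mu\left(\pi,1\right)+\mu\left(\hat{\pi},1\right)\right]$. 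A quick consistency check shows the global formulation breaks: if $M\left(\hat{\pi}\right)$ were proportional to $\alpha_{\pi}$ with the constant forced by that coefficient, you would need $M\left(1_{p,q}\right)=2\alpha_{\tau_{p,q}}$ plus correction terms, not $\alpha_{p,q}$. You flag exactly this as ``the main obstacle,'' but resolving it is the entire content of the lemma, so the proposal as written has a genuine gap rather than a deferred routine verification.

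The paper avoids the global pair altogether. It first notes that each of (\ref{formula: moment cumulant}) and (\ref{formula: cumulant moment}) is triangular with leading coefficient $1$, so it suffices to prove one direction; it then substitutes the cumulant expansions (\ref{formula: free moment cumulant}) and (\ref{formula: general moment cumulant}) into the right-hand side of (\ref{formula: cumulant moment}) and collects the coefficient of each individual cumulant term. For a disc-noncrossing $\kappa_{\pi}$ the coefficient collapses to $\sum_{\rho\in\left[\pi,1\right]}\mu\left(\rho,1\right)+\sum_{\rho\in\left[\pi_{\mathrm{op}},1\right]}\mu\left(\rho,1\right)=0$ (this is where the factor $2$ and the identity $\mu\left(\rho,1\right)=\mu\left(\rho_{\mathrm{op}},1\right)$ are consumed), for an annular $\kappa_{\pi}$ it collapses to $\sum_{\rho\in\left[\pi,1\right]}\mu\left(\rho,1\right)=0$ using Lemma~\ref{lemma: technical} part~\ref{item: partial order} to match the ${\cal PS}^{\prime}$ contributions with the hatted part of the interval, and for $\kappa_{\left({\cal V},\rho\right)}$ it collapses to $\sum_{\hat{\rho}\in\left[\hat{\pi},1\right]}\mu\left(\hat{\rho},1\right)$, which is nonzero only for the top term $\kappa_{p,q}$. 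If you want to salvage your formulation, you would need to carry out precisely this bookkeeping to justify your claimed identity on hatted and annular $\rho$; at that point you have reproduced the paper's argument rather than shortcut it.
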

\begin{proof}
We note that \(\kappa_{p,q}\) appears with coefficient \(1\) in (\ref{formula: moment cumulant}) and \(\alpha_{p,q}\) appears wtih coefficient \(1\) in (\ref{formula: cumulant moment}).  By induction, the expression for \(\kappa_{p,q}\) is determined uniquely by (\ref{formula: moment cumulant}) and that for \(\alpha_{p,q}\) by (\ref{formula: cumulant moment}).  We may thus demonstrate the equivalence by assuming (\ref{formula: moment cumulant}) and showing (\ref{formula: cumulant moment}), from which the converse follows.  We will do so by induction on \(p+q\) (the case \(p=q=1\) is straightforward).  We expand each summand in the right-hand side of (\ref{formula: cumulant moment}) in cumulants according to (\ref{formula: free moment cumulant}) and (\ref{formula: general moment cumulant}).

If \(\pi\in S_{\mathrm{disc-nc}}\left(p,q\right)\), then \(\kappa_{\pi}\left(x_{1},\ldots,x_{p},y_{1},\ldots,y_{q}\right)\) appears in the expansion of each term \(\alpha_{\rho}\left(x_{1},\ldots,x_{p},y_{1},\ldots,y_{q}\right)\) for every \(\rho\in S_{\mathrm{disc-nc}}\left(p,q\right)\) with \(\rho\succeq\pi\) (which has coefficient \(2\left(\mu_{\mathrm{sd}}\left(\rho,1\right)+\mu_{\mathrm{sd}}\left(\hat{\rho},1\right)\right)\)), in the expansion of \(\alpha_{\rho}\left(x_{1},\ldots,x_{p},y_{1},\ldots,y_{q}\right)\) for every \(\rho\in S_{\mathrm{ann-nc}}\left(p,q\right)\) with \(\rho\succeq\pi\) (which has coefficient \(\mu_{\mathrm{sd}}\left(\rho,1\right)\)), and in the form \(\alpha_{\pi_{\mathrm{op}_{2}}}\left(x_{1},\ldots,x_{p},y_{q}^{t},\ldots,y_{1}^{t}\right)\) in the expansion of \(\alpha_{\rho}\left(x_{1},\ldots,x_{p},y_{q}^{t},\ldots,y_{1}^{t}\right)\) for every \(\rho\in S_{\mathrm{ann-nc}}\left(p,q\right)\) with \(\rho\succeq\pi_{\mathrm{op}_{2}}\) (which has coefficient \(\mu_{\mathrm{sd}}\left(\rho,1\right)\)).  It does not appear in the expansion of the \(\alpha_{\left({\cal U},\rho\right)}\).  Noting that, for \(\rho\in S_{\mathrm{sd}}\left(p,q\right)\), \(\mu_{\mathrm{sd}}\left(\rho,1\right)=\mu_{\mathrm{sd}}\left(\rho_{\mathrm{op}_{i}},1\right)\) (see the comment after Definition~\ref{definition: opposite}), the total coefficient is
\begin{multline*}
2\sum_{\substack{\rho\in S_{\mathrm{disc-nc}}\left(p,q\right)\\\rho\succeq\pi}}\left[\mu_{\mathrm{sd}}\left(\rho,1\right)+\mu_{\mathrm{sd}}\left(\hat{\rho},1\right)\right]+\sum_{\substack{\rho\in S_{\mathrm{ann-nc}}\left(p,q\right)\\\rho\succeq\pi}}\mu_{\mathrm{sd}}\left(\rho,1\right)\\+\sum_{\substack{\rho\in S_{\mathrm{ann-nc}}\\\rho_{\mathrm{op}_{2}\succeq\pi}}}\mu_{\mathrm{sd}}\left(\rho,1\right)=\sum_{\rho\in\left[\pi,1\right]}\mu_{\mathrm{sd}}\left(\rho,1\right)+\sum_{\rho\in\left[\pi_{\mathrm{op}_{2}},1\right]}\mu_{\mathrm{sd}}\left(\rho,1\right)=0\textrm{.}
\end{multline*}

If \(\pi\in S_{\mathrm{ann-nc}}\left(p,q\right)\), then \(\kappa_{\pi}\left(x_{1},\ldots,x_{p},y_{1},\ldots,y_{q}\right)\) appears in the expansion of \(\alpha_{\rho}\left(x_{1},\ldots,x_{p},y_{1},\ldots,y_{q}\right)\) for every \(\rho\in S_{\mathrm{ann-nc}}\left(p,q\right)\) with \(\rho\succeq\pi\) (with coefficient \(\mu_{\mathrm{sd}}\left(\rho,1\right)\)), and in the expansion of \(\alpha_{\left({\cal U},\rho\right)}\left(x_{1},\ldots,x_{p},y_{1},\ldots,y_{q}\right)\) for \(\left({\cal U},\rho\right)\) with \({\cal U}\succeq\Pi\left(\pi\right)\) (with coefficient \(\mu_{\mathrm{sd}}\left(\hat{\rho},1\right)\)).  The total coefficient is then
\[\sum_{\substack{\rho\in S_{\mathrm{ann-nc}}\left(p,q\right)\\\rho\succeq\pi}}\mu_{\mathrm{sd}}\left(\rho,1\right)+\sum_{\substack{\rho\in S_{\mathrm{disc-nc}}\left(p,q\right)\\\hat{\rho}\succeq\pi}}\mu_{\mathrm{sd}}\left(\hat{\rho},1\right)=\sum_{\rho\in\left[\pi,1\right]}\mu_{\mathrm{sd}}\left(\rho,1\right)=0\textrm{.}\]
Likewise the sum of the coefficients on \(\kappa_{\pi}\left(x_{1},\ldots,x_{p},y_{q}^{t},\ldots,y_{1}^{t}\right)\) vanishes.

If \(\left({\cal U},\pi\right)\in\mathit{PS}^{\prime}\left(p,q\right)\), then \(\kappa_{\left({\cal U},\pi\right)}\left(x_{1},\ldots,x_{p},y_{1},\ldots,y_{q}\right)\) appears in the expansion of \(\alpha_{\left({\cal V},\rho\right)}\left(x_{1},\ldots,x_{p},y_{1},\ldots,y_{q}\right)\) for all \(\left({\cal V},\rho\right)\in\mathit{PS}^{\prime}\left(p,q\right)\) with \({\cal U}\succeq{\cal V}\) (which has coefficient \(\mu\left(\hat{\rho},1\right)\)).  For any \(\rho\in S_{\mathrm{disc-nc}}\left(p,q\right)\) with \(\rho\succeq\pi\), there is exactly one such \(\left({\cal V},\rho\right)\), with nontrivial block of \({\cal V}\) containing the two cycles of \(\rho\) which contain the two cycles of \(\pi\) in \(U\).  The total coefficient is
\[\sum_{\substack{\rho\in S_{\mathrm{disc-nc}}\left(p,q\right)\\\rho\succeq\pi}}\mu\left(\hat{\rho},1\right)=\sum_{\hat{\rho}\in\left[\hat{\pi},1\right]}\mu\left(\hat{\rho},1\right)=\left\{\begin{array}{ll}1\textrm{,}&\hat{\rho}=1\\0\textrm{,}&\textrm{otherwise}\end{array}\right.\textrm{.}\]

By the comment at the beginning of the proof the converse follows.
\end{proof}

We will make use of the following folklore lemma, which characterises the disc-noncrossing permutations on one cycle which do not connect any element to its neighbour and the annular noncrossing permutations which in addition do not have any single-element cycles:
\begin{lemma}
\begin{enumerate}
	\item Let \(\tau\in S_{n}\) have one cycle, \(n\geq 2\).  If \(\pi\in S_{\mathrm{nc}}\left(\tau\right)\) does not have any cycle containing both \(a\) and \(\tau\left(a\right)\) for any \(a\in\left[n\right]\), then \(\pi\) has at least two single-element cycles.
\label{item: disc singlets}
	\item Let \(\tau\in S_{n}\) have two cycles.  If \(\pi\in S_{\mathrm{ann-nc}}\left(\tau\right)\) does not have any single-element cycles or any cycle containing both \(a\) and \(\tau\left(a\right)\) for \(a\in\left[p,q\right]\), then it is of the form
\[\pi=\left(a,b\right)\left(\tau\left(a\right),\tau^{-1}\left(b\right)\right)\left(\tau^{2}\left(a\right),\tau^{-2}\left(b\right)\right)\cdots\left(\tau^{-1}\left(a\right),\tau\left(b\right)\right)\]
where \(a\) and \(b\) are in different cycles of \(\tau\) (so the two cycles of \(\tau\) must have the same number of elements for such a \(\pi\) to exist).
\label{item: annular spoke}
\end{enumerate}
\label{lemma: singlets}
\end{lemma}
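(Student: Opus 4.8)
We treat the two parts separately; part~2 invokes part~1. For part~1 I would argue from the nesting structure of the noncrossing partition $\Pi(\pi)$: regard the cycles of $\pi$ as blocks on the cyclically ordered set $1,\dots,n$ and organize them into a rooted forest by the relation ``is directly nested in a gap of,'' adjoining a virtual root for the outer region. The first point is that every leaf of this forest is a singleton block: a leaf has nothing nested in any of its gaps, so all its gaps are empty, so it is a run of cyclically consecutive integers, and if it had two or more elements it would contain a pair $\{a,\tau(a)\}$, which is excluded. The second point is that the forest cannot be a single path: were it one, its unique topmost block $B_{1}$ together with its descendants would exhaust $[n]$, so $B_{1}$ would have at least two elements (as $n\ge 2$), whence — again by the no-neighbours hypothesis — all of its at least two gaps would be nonempty, forcing $B_{1}$ to have at least two children. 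A rooted forest that is not a path has at least two leaves, so $\pi$ has at least two singleton cycles. The only care here is recalling the standard facts on nesting of noncrossing partitions.

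For part~2, after relabelling assume $\tau=\tau_{p,q}$. The first step is a rigidity argument from cycle counting. Since $\pi\in S_{\mathrm{ann-nc}}(p,q)$ we have $\chi_{\tau}(\pi)=2$, that is, $\#(\pi)+\#(\mathrm{Kr}_{\tau}(\pi))=p+q$. The no-singletons hypothesis gives $\#(\pi)\le(p+q)/2$. A fixed point $a$ of $\mathrm{Kr}_{\tau}(\pi)$ satisfies $\pi(a)=\tau(a)$, which either puts $\{a,\tau(a)\}$ in a cycle of $\pi$ (if $\tau(a)\ne a$) or makes $a$ a singleton of $\pi$ (if $\tau(a)=a$); both are excluded, so $\mathrm{Kr}_{\tau}(\pi)$ is fixed-point free and $\#(\mathrm{Kr}_{\tau}(\pi))\le(p+q)/2$ as well. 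Hence equality holds throughout: $\pi$ and $\mathrm{Kr}_{\tau}(\pi)$ are fixed-point-free involutions and $p+q$ is even. Once it is known that $\pi$ has no \emph{disc} $2$-cycle (one whose two elements lie in the same cycle of $\tau$), Lemma~\ref{lemma: technical}(\ref{item: all bridges}) presents $\pi$ as a union of bridges met in opposite cyclic orders on the two discs; the no-neighbours hypothesis forces each bridge to be a transposition; $[p]$ meets each bridge once, so there are $p$ bridges, and likewise $q$, giving $p=q$; and reading the transpositions off in cyclic order produces exactly the permutation $(a,b)(\tau(a),\tau^{-1}(b))\cdots(\tau^{-1}(a),\tau(b))$ of the statement.

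The crux is thus excluding disc $2$-cycles. Suppose $(a,c)$ is a $2$-cycle of $\pi$ with $\{a,c\}\subseteq[p]$; then $\pi|_{[p]}\in S_{\mathrm{nc}}(\tau|_{[p]})$ (the Biane conditions for $\tau|_{[p]}$ are among the annular-nonstandard and annular-crossing conditions), and, $\pi$ being a fixed-point-free involution, each cycle of $\pi|_{[p]}$ is a singleton — an element lying in a bridge of $\pi$ — or a disc $2$-cycle. Among all disc $2$-cycles of $\pi$ inside $[p]$ and the arcs of $[p]$ they cut off, choose $(a,c)$ and an arc $J$ with $|J|$ minimal; since $(a,c)$ is not a $\tau$-neighbour, both $J$ and $J':=[p]\setminus(\{a,c\}\cup J)$ are nonempty, and minimality forces every element of $J$ to be a fixed point of $\pi|_{[p]}$, hence to lie in a bridge of $\pi$. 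Fix $u\in J$ with bridge $(u,v)$, $v\in[p+1,p+q]$. A direct computation of $\lambda_{u,v}$ shows its two arcs between $a$ and $c$ are $J'$ and $(J\setminus\{u\})\cup([p+1,p+q]\setminus\{v\})$. If some $w\in J'$ lies in a bridge $(w,w')$ of $\pi$, then $w$ and $w'$ sit in these two different arcs, so $(a,c)$, $(w,w')$ and $(u,v)$ realize annular-crossing condition~(\ref{item: ac3}), impossible for $\pi\in S_{\mathrm{ann-nc}}(p,q)$. Therefore $J'$, which is nonempty and a union of cycles of $\pi|_{[p]}$, consists only of disc $2$-cycles; picking one of them inside $J'$ together with its arc contained in $J'$ yields a strictly smaller nonempty set with the same property, and iterating produces an infinite strictly decreasing chain of finite nonempty sets — a contradiction. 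The symmetric argument excludes disc $2$-cycles inside $[p+1,p+q]$. The main obstacle is exactly this step: one must single out the right innermost configuration, track carefully which arc of $\lambda_{u,v}$ each point occupies, and translate the configuration into one of the Mingo--Nica annular-crossing conditions; the rigidity argument, the reading-off of the ladder, and all of part~1 are then routine.
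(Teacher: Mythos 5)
Your proof is correct, and on part~\ref{item: annular spoke} it takes a genuinely different route from the paper's. Part~\ref{item: disc singlets} is essentially the paper's argument in different packaging: the paper picks a minimal arc cut off by a non-singleton cycle and finds a singleton inside it, then repeats on the complementary arc; your nesting forest organizes the same innermost-interval idea, with leaves in the role of minimal arcs (and your ``not a path'' step amounts to observing that the outermost block would otherwise have to contain the cyclically adjacent pair $\left\{n,1\right\}$). For part~\ref{item: annular spoke} the paper never counts cycles: it shows directly that no cycle of $\pi$ meets one circle twice, by arguing that the arc between $a$ and $\tau^{k}\left(a\right)$ must contain an element $x$ bridging to the other circle (else the part~\ref{item: disc singlets} reasoning yields a forbidden singleton there), and then re-running that reasoning inside the disc $\lambda_{x,y}$ to force a forbidden singleton in the complementary arc; this kills disc cycles of every size at once and leaves only two-element bridges. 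You instead first extract rigidity from the Euler characteristic --- $\#\left(\pi\right)+\#\left(\mathrm{Kr}_{\tau}\left(\pi\right)\right)=p+q$ together with fixed-point-freeness of both $\pi$ and $\mathrm{Kr}_{\tau}\left(\pi\right)$ forces both to be pairings --- and then separately exclude disc $2$-cycles by a minimality-plus-descent argument through annular-crossing condition~\ref{item: ac3}. The rigidity step is a clean shortcut to ``every cycle is a pair'' (and gives the pairing structure of the Kreweras complement for free, which the paper only obtains implicitly), but you pay for it with the more laborious exclusion of disc $2$-cycles, where the paper's single $\lambda_{x,y}$ argument is shorter. Both proofs then read the spoke form off the all-bridges structure, you via Lemma~\ref{lemma: technical}, part~\ref{item: all bridges}, the paper by a direct induction on adjacent pairs in $\lambda_{a,b}$.
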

\begin{proof}
\ref{item: disc singlets}: If \(\pi\) has no cycles with more than one element, then we are done.  Otherwise, there is an orbit of \(\pi\) containing some \(a\) and \(\tau^{k}\left(a\right)\) for some minimal \(k\), \(2\leq k\leq n-2\).  Then any element in \(\tau\left(a\right),\ldots,\tau^{k-1}\left(a\right)\) may not share an orbit of \(\pi\) with any element outside this set (noncrossing condition).  If an orbit of such an element contains more than one element, we can repeat the argument with \(k^{\prime}<k\), so there must be a single-element cycle among \(\tau\left(a\right),\ldots,\tau^{k-1}\left(a\right)\).  We may also repeat the argument starting with \(\tau^{k}\left(a\right)\), to find another single-element cycle.

\ref{item: annular spoke}: No cycle of such a \(\pi\) contains two elements from the same cycle of \(\tau\): if \(a\) and \(\tau^{k}\left(a\right)\) share a cycle of \(\pi\), some element \(x\) of \(\tau\left(a\right),\ldots,\tau^{k-1}\left(a\right)\) must share an orbit of \(\pi\) with an element \(y\) of the other cycle of \(\tau\), since it may not be in a single-element cycle (as in part~\ref{item: disc singlets}).  Then \(\left.\pi\right|_{\left[n\right]\setminus\left\{x,y\right\}}\) is noncrossing on \(\lambda_{x,y}\), so there must be a single-element cycle of \(\pi\) among \(\tau^{k+1}\left(a\right),\tau^{k+2}\left(a\right),\ldots,\tau^{-1}\left(a\right)\) (by the reasoning in part~\ref{item: disc singlets}.).  Thus any cycle of \(\pi\) is of the form \(\left(a,b\right)\) where \(a\) and \(b\) are in different cycles of \(\tau\).  Because \(\left.\pi\right|_{\left[n\right]\setminus\left\{a,b\right\}}\) is noncrossing on \(\lambda_{a,b}\), \(\tau\left(a\right)\) and \(\tau^{-1}\left(b\right)\) cannot both be paired with any other elements, so by induction \(\pi\) must be of the form given.
\end{proof}

The following theorem shows that second-order freeness is equivalent to the vanishing of mixed free and second-order free cumulants:
\begin{theorem}
Subalgebras \(A_{1},\ldots,A_{m}\subseteq A\) of second-order real probability space \(A\) are real second-order free if and only if all mixed free cumulants and all mixed real second-order cumulants vanish.
\end{theorem}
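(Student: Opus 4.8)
\emph{Overview of the approach.} The plan is the usual two–way argument, in which the first–order content is already settled: by the result cited after (\ref{formula: free cumulant moment}) the vanishing of all mixed free cumulants $\kappa_n$ is equivalent to ordinary freeness, so throughout we may assume the $A_i$ are free and concentrate entirely on the second–order half. I will use without further comment that the $A_i$ are stable under the involution $t$, that $\varphi_1(z^t)=\varphi_1(z)$ and $(zw)^t=w^tz^t$, and that by multilinearity every argument may be replaced by its centring.

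\emph{Vanishing of mixed cumulants $\Rightarrow$ second–order real freeness.} Let $x_1,\dots,x_p$ and $y_1,\dots,y_q$ be centred and cyclically alternating, and expand $\alpha_{p,q}(x_1,\dots,y_q)=\varphi_2(x_1\cdots x_p,y_1\cdots y_q)$ using (\ref{formula: moment cumulant}). Each summand is a product of free and second–order cumulants indexed by the cycles (resp.\ blocks) of the underlying permutation; since all mixed cumulants vanish it is zero unless every such cycle is monochromatic, and since every argument and every $t$–transpose of an argument is centred, a singleton cycle contributes a factor $\kappa_1=\varphi_1=0$. Thus only permutations with monochromatic cycles and no singletons can contribute. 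On cyclically alternating data a monochromatic cycle can contain no two cyclically adjacent indices, so by Lemma~\ref{lemma: singlets}, part~\ref{item: annular spoke}, the only $\alpha\in S_{\mathrm{ann-nc}}(p,q)$ that survive (in either of the first two sums of (\ref{formula: moment cumulant})) are the ``spoke'' permutations, while by Lemma~\ref{lemma: singlets}, part~\ref{item: disc singlets}, every $(U,\alpha)\in\mathrm{PS}'(p,q)$ acquires a singleton cycle off the distinguished block $U$ and so drops out; spokes exist only when $p=q$, matching the case split in (\ref{formula: second-order freeness}). On a spoke every cycle is a transposition pairing some $x_i$ with some $y_j$, and $\kappa_2(x_i,y_j)=\varphi_1(x_iy_j)$ by centredness; each of the two sums contains exactly $p$ spokes, a factor $\varphi_1(x_iy_j)$ with $x_i,y_j$ in different subalgebras already vanishes by first–order freeness, so summing over spokes recovers, after rewriting the $y^t$–sum via $\varphi_1(z^t)=\varphi_1(z)$ and $(zw)^t=w^tz^t$, exactly the two sums on the right of (\ref{formula: second-order freeness}). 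Together with $\varphi_1(x_1\cdots x_p)=0$ this is second–order real freeness.

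\emph{Second–order real freeness $\Rightarrow$ vanishing of mixed cumulants.} Argue by induction on $p+q$, fixing a mixed $\kappa_{p,q}$. By multilinearity reduce to arguments each lying in a single $A_i$; by a product–expansion for $\kappa_{p,q}$ (merging cyclically adjacent arguments from a common subalgebra and applying the inductive hypothesis to the strictly smaller cumulants that appear) reduce to a cyclically alternating list; and by centring each argument, using that $\kappa_{p,q}$ with a scalar entry reduces to a lower cumulant of the remaining entries (in particular $\kappa_{1,1}$ with a scalar entry vanishes), reduce to centred, cyclically alternating arguments with $p+q\ge 2$. For these, expand $\varphi_2(x_1\cdots x_p,y_1\cdots y_q)$ by (\ref{formula: moment cumulant}): by the inductive hypothesis the only summands that need not vanish are $\kappa_{p,q}(x_1,\dots,y_q)$ itself together with the surviving spoke terms identified above, whose sum is the right–hand side of (\ref{formula: second-order freeness}); since $\varphi_2(x_1\cdots x_p,y_1\cdots y_q)$ equals that same right–hand side by freeness, cancellation yields $\kappa_{p,q}(x_1,\dots,y_q)=0$. (For the base case $p=q=1$, the two relations $\varphi_2(\mathring a,\mathring b)=2\kappa_{1,1}(\mathring a,\mathring b)+\kappa_{1,1}(\mathring a,\mathring b^{t})$ and its $b\mapsto b^t$ counterpart, both equal to $0$ by freeness, force $\kappa_{1,1}(\mathring a,\mathring b)=0$.)

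\emph{Where the difficulty lies.} The structural input -- first–order freeness and the no–singleton dichotomy of Lemma~\ref{lemma: singlets} -- is what keeps the surviving terms manageable; the genuine work is the bookkeeping in the second–order $\Rightarrow$ direction. One must set up the product–expansion for $\kappa_{p,q}$ and the reduction of cumulants with a scalar entry directly from (\ref{formula: cumulant moment}), and -- in both directions -- handle the ``doubling'' uniformly: the two annulus orientations (the sums $\kappa_\alpha(\dots,y_1,\dots,y_q)$ and $\kappa_\alpha(\dots,y_q^t,\dots,y_1^t)$) together with the $\mathrm{PS}'$ family, checking that once signs, cyclic shifts and the involution $t$ are tracked correctly the surviving contributions reassemble precisely into (\ref{formula: second-order freeness}).
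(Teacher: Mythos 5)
Your overall architecture matches the paper's: induction on $p+q$, centring, and Lemma~\ref{lemma: singlets} to isolate the spoke diagrams. The converse direction (vanishing of mixed cumulants $\Rightarrow$ second-order freeness) and the cyclically alternating case of the forward direction are essentially the paper's argument and are sound. The problem is the step you call ``a product-expansion for $\kappa_{p,q}$.'' This is not a lemma you can invoke --- it is the bulk of the proof, and you have only named it. The paper handles non-cyclically-alternating arguments by expanding $\alpha_{p,q-1}(a_1,\ldots,a_p,b_1,\ldots,b_{q-2},b_{q-1}b_q)$ via (\ref{formula: cumulant moment}) as in (\ref{formula: squished moments}), re-expanding each moment on the right back into cumulants of the $p+q$ unmerged arguments, and collecting coefficients. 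That requires constructing the poset embedding $\pi\mapsto\pi'$ of $S_{\mathrm{nc-sd}}\left(p,q-1\right)$ onto the $\pi'\in S_{\mathrm{nc-sd}}\left(p,q\right)$ with $\pi'\left(p+q-1\right)=p+q$, identifying for each $\rho$ the element $\tilde{\rho}$ that controls which merged terms contain a given cumulant, and verifying by M\"{o}bius summation that every cumulant other than $\kappa_{p,q}$ acquires total coefficient $0$ or $1$ so as to cancel against (\ref{formula: moment cumulant}). Saying one will ``apply the inductive hypothesis to the strictly smaller cumulants that appear'' presupposes exactly the knowledge of which cumulants appear and with what coefficients; deriving this ``directly from (\ref{formula: cumulant moment})'', as you propose, is the hard combinatorial content, not an afterthought. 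As written, the forward direction is established only for cyclically alternating arguments.

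A second, smaller error: your base case identity is wrong. Expanding (\ref{formula: moment cumulant}) at $p=q=1$ gives
\[\varphi_{2}\left(\mathring{a},\mathring{b}\right)=\kappa_{2}\left(\mathring{a},\mathring{b}\right)+\kappa_{2}\left(\mathring{a},\mathring{b}^{t}\right)+\kappa_{1,1}\left(\mathring{a},\mathring{b}\right)\textrm{,}\]
since $S_{\mathrm{ann-nc}}\left(1,1\right)=\left\{\left(1,2\right)\right\}$ contributes first-order cumulants $\kappa_{2}$, not copies of $\kappa_{1,1}$; your relation $\varphi_{2}(\mathring{a},\mathring{b})=2\kappa_{1,1}(\mathring{a},\mathring{b})+\kappa_{1,1}(\mathring{a},\mathring{b}^{t})$ does not follow from the moment--cumulant formula. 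The conclusion $\kappa_{1,1}(\mathring{a},\mathring{b})=0$ still holds, because the two $\kappa_{2}$ terms are mixed first-order cumulants and vanish by freeness while the left-hand side is $2\varphi_{1}(\mathring{a}\mathring{b})=0$, but the cancellation is a different one from the pair of linear equations you wrote down.
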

\begin{proof}
Assume first that the subalgebras are second-order free.  We demonstrate that mixed cumulants vanish by induction on \(p+q\).  In the base case, where \(p=q=1\) and \(a_{1}\) and \(b_{1}\) are second-order real free,
\[\kappa_{1,1}\left(a_{1},b_{1}\right)=-\alpha_{2}\left(a_{1},b_{1}\right)-\alpha_{2}\left(a_{1},b_{1}^{t}\right)+2\alpha_{1}\left(a_{1}\right)\alpha_{1}\left(b_{1}\right)+\alpha_{1,1}\left(a_{1},b_{1}\right)\textrm{.}\]
Since \(a_{1}\) is free from \(b_{1}\), \(\alpha_{2}\left(a_{1},b_{1}\right)=\alpha_{2}\left(a_{1},b_{1}^{t}\right)=\alpha_{1}\left(a_{1}\right)\alpha_{1}\left(b_{1}\right)\), and by the definition of second-order freeness, \(\alpha_{1,1}\left(a_{1},b_{1}\right)=0\), so \(\kappa_{1,1}\left(a_{1},b_{1}\right)=0\).

We now consider two cases separately: firstly the case where the \(a_{1},\ldots,a_{p}\) and \(b_{1},\ldots,b_{q}\) are either cyclically alternating or have only one term, and secondly the case where this does not hold.

We first assume that the \(a_{1},\ldots,a_{p}\) and \(b_{1},\ldots,b_{q}\) are each either cyclically alternating or consist of a single term.  Then
\begin{multline}
\alpha_{p,q}\left(\mathring{a}_{1},\ldots,\mathring{a}_{p},\mathring{b}_{1},\ldots,\mathring{b}_{q}\right)\\=\sum_{\substack{\left\{i_{1},\ldots,i_{k}\right\}\subseteq\left[p\right]\\\left\{j_{1},\ldots,j_{l}\right\}\subseteq\left[q\right]}}\left(-1\right)^{p+q-k-l}\varphi_{2}\left(a_{i_{1}}\cdots a_{i_{k}},b_{j_{1}}\cdots b_{j_{l}}\right)\\\prod_{i\in\left[p\right]\setminus\left\{i_{1},\ldots,i_{k}\right\}}\varphi_{1}\left(a_{i}\right)\prod_{j\in\left[q\right]\setminus\left\{j_{1},\ldots,j_{l}\right\}}\varphi_{1}\left(b_{j}\right)
\label{formula: centred}
\end{multline}
where we let \(i_{1}<\cdots<i_{k}\) and \(j_{1}<\cdots<j_{l}\).  Considering a given \(I=\left\{i_{1},\ldots,i_{k}\right\}\) and \(J=\left\{j_{1},\ldots,j_{l}\right\}\) and expanding the moment in cumulants, the summand is:
\begin{multline}
\sum_{\substack{\pi\in S_{\mathrm{ann-nc}}\left(p,q\right)\\\pi\left(i\right)=i,i\notin I}}\kappa_{\pi}\left(a_{1},\ldots,a_{p},b_{1},\ldots,b_{q}\right)\\+\sum_{\substack{\pi\in S_{\mathrm{ann-nc}}\left(p,q\right)\\\pi_{\mathrm{op}_{2}}\left(i\right)=i,i\notin I}}\kappa_{\pi}\left(a_{1},\ldots,a_{p},b_{q}^{t},\ldots,b_{1}^{t}\right)\\+\sum_{\substack{\left({\cal V},\pi\right)\in{\cal PS}^{\prime}\left(p,q\right)\\\left\{i\right\}\in{\cal V},i\notin I}}\kappa_{\left({\cal U},\pi\right)}\left(a_{1},\ldots,a_{p},b_{1},\ldots,b_{q}\right)\textrm{.}
\label{formula: cumulant singlets}
\end{multline}
A summand in (\ref{formula: cumulant singlets}) with \(r\) single-element blocks (of the permutation in the case of the first two sums and of the partition in the case of the third sum) will appear in the expansion of any summand in (\ref{formula: centred}) where \(\left[p+q\right]\setminus\left(I\cup J\right)\) is a subset of the cumulant's single-element blocks.  There are \(\binom{r}{s}\) such summands in (\ref{formula: centred}) with \(\left|\left[p+q\right]\setminus\left(I\cup J\right)\right|=s\), which appear with sign \(\left(-1\right)^{s}\).  The total coefficient on the cumulant summand from (\ref{formula: cumulant singlets}) is then \(\sum_{s=0}^{r}\left(-1\right)^{s}\binom{r}{s}=\left(1-1\right)^{r}\), which is \(0\) unless \(r=0\).  So (\ref{formula: centred}) is the sum over cumulants (in the uncentred variables) with no single-element blocks:
\begin{multline}
\alpha_{p,q}\left(\mathring{a}_{1},\ldots,\mathring{a}_{p},\mathring{b}_{1},\ldots,\mathring{b}_{q}\right)\\=\sum_{\substack{\pi\in S_{\mathrm{ann-nc}}\left(p,q\right)\\\pi\left(i\right)\neq i,i\in\left[p+q\right]}}\kappa_{\pi}\left(a_{1},\ldots,a_{p},b_{1},\ldots,b_{q}\right)\\+\sum_{\substack{\pi\in S_{\mathrm{ann-nc}}\left(p,q\right)\\\pi\left(i\right)\neq i,i\in\left[p+q\right]}}\kappa_{\pi}\left(a_{1},\ldots,a_{p},b_{q}^{t},\ldots,b_{1}^{t}\right)\\+\sum_{\substack{\left({\cal U},\pi\right)\in{\cal PS}^{\prime}\left(p,q\right)\\\left\{i\right\}\notin{\cal U},i\in\left[p+q\right]}}\kappa_{\left({\cal U},\pi\right)}\left(a_{1},\ldots,a_{p},b_{1},\ldots,b_{q}\right)\textrm{.}
\label{formula: centred cumulant expansion}
\end{multline}
In any term, if a cycle of \(\pi\) contains more than one element from the same cycle of \(\tau\), there must be a cycle of \(\pi\) containing two cyclically adjacent elements (Lemma~\ref{lemma: singlets}; at least one cycle of \(\tau\) has at least \(2\) elements since \(p+q\geq 1\)).  Such a term therefore contains a mixed free cumulant or free second-order cumulant, and hence vanishes, except possibly the term \(\kappa_{p,q}\left(a_{1},\ldots,a_{p},b_{1},\ldots,b_{q}\right)\) not covered by the induction hypothesis.  In (\ref{formula: second-order freeness}), the terms on the right-hand side are exactly those in (\ref{formula: moment cumulant}) in which \(\pi\) has no single-element cycles and no cycle contains cyclically adjacent elements (in both cases \(p=q\) and \(p\neq q\)).  Equating this expression for \(\alpha_{p,q}\left(\mathring{a}_{1},\ldots,\mathring{a}_{p},\mathring{b}_{1},\ldots,\mathring{b}_{q}\right)\) with (\ref{formula: centred cumulant expansion}), the only term which does not appear on both sides is \(\kappa_{p,q}\left(a_{1},\ldots,a_{p},b_{1},\ldots,b_{q}\right)\), which must therefore vanish.

If at least one of \(a_{1},\ldots,a_{p}\) and \(b_{1},\ldots,b_{q}\) is not cyclically alternating and consists of more than one term, then there is at least one term followed (cyclically) by a term from the same subalgebra.  Without loss of generality, we may assume these are \(b_{q-1}\) and \(b_{q}\).  Rearranging (\ref{formula: cumulant moment}) applied to \(a_{1},\ldots,a_{p}\) and \(b_{1},\ldots,b_{q-2},b_{q-1}b_{q}\):
\begin{multline}
\alpha_{p,q-1}\left(a_{1},\ldots,a_{p},b_{1},\ldots,b_{q-2},b_{q-1}b_{q}\right)\\=\kappa_{p,q-1}\left(a_{1},\ldots,a_{p},b_{1},\ldots,b_{q-2},b_{q-1}b_{q}\right)\\-2\sum_{\pi\in S_{\mathrm{disc-nc}}\left(p,q-1\right)}\left(\mu\left(\pi,1\right)+\mu\left(\hat{\pi},1\right)\right)\alpha_{\pi}\left(a_{1},\ldots,a_{p},b_{1},\ldots,b_{q-2},b_{q-1}b_{q}\right)\\-\sum_{\pi\in S_{\mathrm{ann-nc}}\left(p,q-1\right)}\mu\left(\pi,1\right)\alpha_{\pi}\left(a_{1},\ldots,a_{p},b_{1},\ldots,b_{q-2},b_{q-1}b_{q}\right)\\-\sum_{\pi\in S_{\mathrm{ann-nc}}\left(p,q-1\right)}\mu\left(\pi,1\right)\alpha_{\pi_{\mathrm{op}_{1}}}\left(a_{p}^{t},\ldots,a_{1}^{t},b_{1},\ldots,b_{q-2},b_{q-1}b_{q}\right)\\-\sum_{\substack{\left({\cal U},\pi\right)\in\mathit{SP}^{\prime}\left(p,q-1\right)\\\left({\cal U},\pi\right)\neq\left(1,\tau_{p,q-1}\right)}}\mu\left(\hat{\pi},1\right)\alpha_{\left({\cal U},\pi\right)}\left(a_{1},\ldots,a_{p},b_{1},\ldots,b_{q-2},b_{q-1}b_{q}\right)\textrm{.}
\label{formula: squished moments}
\end{multline}
By the induction hypothesis, the cumulant vanishes.  A moment on the right side with permutation \(\pi\) is equal to a moment of \(p+q\) terms with permutation \(\pi^{\prime}\) constructed by inserting \(p+q\) in the cycle following \(p+q-1\) (and constructing \({\cal U}^{\prime}\in{\cal P}\left(p+q\right)\) by adding \(p+q\) to the same block of \({\cal U}\) as \(p+q-1\)).  This can be inverted by taking the permutation \(\left.\pi^{\prime}\right|_{\left[p+q-1\right]}\) (and \(\left.{\cal U}^{\prime}\right|_{\left[p+q-1\right]}\)).  Noting that \(\pi^{\prime}\left(p+q-1\right)=p+q\) is equivalent to \(\mathrm{Kr}^{-1}\left(\pi^{\prime}\right)\) containing the cycle \(\left(p+q\right)\) and that \(\mathrm{Kr}^{-1}\left(\pi^{\prime}\right)=\mathrm{Kr}^{-1}\left(\pi\right)\cdot\left(p+q\right)\), this transformation is a poset bijection between \(S_{\mathrm{sd}}\left(p,q-1\right)\) and the \(\pi^{\prime}\in S_{\mathrm{sd}}\left(p,q\right)\) for which \(\pi^{\prime}\left(p+q-1\right)=p+q\).  We note that for \(\rho\in S_{\mathrm{sd}}\left(p,q\right)\), there is a \(\tilde{\rho}\in S_{\mathrm{sd}}\left(p,q-1\right)\) such that the \(\pi\in S_{\mathrm{sd}}\left(p,q-1\right)\) with \(\pi^{\prime}\succeq\rho\) are exactly those with \(\pi\succeq\tilde{\rho}\): we have
\[\rho\preceq\pi^{\prime}\Leftrightarrow\widehat{\mathrm{Kr}}^{-1}\left(\rho\right)\succeq\widehat{\mathrm{Kr}}^{-1}\left(\pi^{\prime}\right)\Leftrightarrow\left.\mathrm{Kr}^{-1}\left(\rho\right)\right|_{\left[p+q-1\right]}\cdot \left(p+q\right)\succeq\widehat{\mathrm{Kr}}^{-1}\left(\pi^{\prime}\right)\textrm{.}\]
(We interpret the permutation on the left-hand side of the last inequality as the element of \(S_{\mathrm{sd}}\left(p,q\right)\) covered by \(\widehat{\mathrm{Kr}}^{-1}\left(\rho\right)\).  We verify that this permutation is also noncrossing, since the new cycle contains only one element and therefore cannot introduce any of the nonstandard or crossing conditions; and that \(\mathrm{Kr}^{-1}\left(\pi^{\prime}\right)\) is still noncrossing on this new permutation, as it is noncrossing on each of the disjoint parts on \(\left[p+q-1\right]\) and \(\left\{p+q\right\}\).)  The inequality is then equivalent to
\begin{multline*}
\widehat{\mathrm{Kr}}\left(\left.\mathrm{Kr}^{-1}\left(\rho\right)\right|_{\left[p+q-1\right]}\cdot\left(p+q\right)\right)\preceq\pi^{\prime}\\\Leftrightarrow\tilde{\rho}:=\left.\widehat{\mathrm{Kr}}\left(\left.\mathrm{Kr}^{-1}\left(\rho\right)\right|_{\left[p+q-1\right]}\cdot\left(p+q\right)\right)\right|_{\left[p+q-1\right]}\preceq\pi\textrm{.}
\end{multline*}
Interpreting the moments on the right-hand side of (\ref{formula: squished moments}) as moments over \(\pi^{\prime}\), \(\left(\pi_{\mathrm{op}_{1}}\right)^{\prime}\), and \(\left({\cal U}^{\prime},\pi^{\prime}\right)\) as appropriate, we expand them in cumulants on \(p+q\) terms according to (\ref{formula: free moment cumulant}) and (\ref{formula: moment cumulant}) and collect the coefficient on each cumulant.  For \(\rho\in S_{\mathrm{disc-nc}}\left(p,q\right)\), the total coefficient on \(\kappa_{\rho}\left(a_{1},\ldots,a_{p},b_{1},\ldots,b_{q}\right)\) (including when it appears in the third sum on the right-hand side of (\ref{formula: squished moments}) as \(\kappa_{\rho_{\mathrm{op}_{1}}}\left(a_{p}^{t},\ldots,a_{1}^{t},b_{1},\ldots,b_{q}\right)\)) is
\begin{multline*}
-2\sum_{\substack{\pi\in S_{\mathrm{disc-nc}}\left(p,q-1\right)\\\pi\succeq\tilde{\rho}}}\left(\mu\left(\pi,1\right)+\mu\left(\hat{\pi},1\right)\right)-\sum_{\substack{\pi\in S_{\mathrm{ann-nc}}\left(p,q-1\right)\\\pi\succeq\tilde{\rho}}}\mu\left(\pi,1\right)\\-\sum_{\substack{\pi\in S_{\mathrm{ann-nc}}\left(p,q-1\right)\\\pi_{\mathrm{op}_{1}}\succeq\tilde{\rho}_{\mathrm{op}_{1}}}}\mu\left(\pi,1\right)=0\textrm{,}
\end{multline*}
for \(\rho\in S_{\mathrm{ann-nc}}\left(p,q\right)\), the total coefficient on \(\kappa_{\rho}\left(a_{1},\ldots,a_{p},b_{1},\ldots,b_{q}\right)\) is
\[-\sum_{\substack{\pi\in S_{\mathrm{ann-nc}}\left(p,q-1\right)\\\pi\succeq\tilde{\rho}}}\mu\left(\pi,1\right)-\sum_{\substack{\left({\cal U},\pi\right)\in\mathit{PS}^{\prime}\left(p,q-1\right)\\\left({\cal U},\pi\right)\neq\left(1,\tau_{p,q-1}\right)\\{\cal U}\succeq\Pi\left(\tilde{\rho}\right)}}\mu\left(\hat{\pi},1\right)=\mu\left(1,1\right)=1\]
(and likewise for \(\rho\) with \(\rho_{\mathrm{op}_{1}}\in S_{\mathrm{ann-nc}}\left(p,q\right)\)), and for \(\left({\cal V},\rho\right)\in\mathit{PS}^{\prime}\left(p,q\right)\) with \(\left({\cal V},\rho\right)\neq\left(1,\tau_{p,q}\right)\), the total coefficient on \(\kappa_{\left({\cal V},\rho\right)}\left(a_{1},\ldots,a_{p},b_{1},\ldots,b_{q}\right)\) is
\[-\sum_{\substack{\left({\cal U},\pi\right)\in\mathit{PS}^{\prime}\left(p,q-1\right)\\\left({\cal U},\pi\right)\neq\left(1,\tau_{p,q-1}\right)\\{\cal U}\succeq{\cal V}}}\mu\left(\hat{\pi},1\right)=\mu\left(1,1\right)=1\textrm{.}\]
Substituting this cumulant expansion of (\ref{formula: squished moments}) for \(\alpha_{p,q}\left(a_{1},\ldots,a_{p},b_{1},\ldots,b_{q}\right)\) in (\ref{formula: moment cumulant}), the only term that does not appear on both the left-hand and right-hand side is \(\kappa_{p,q}\left(a_{1},\ldots,a_{p},b_{1},\ldots,b_{q}\right)\), which therefore vanishes.

The converse of the theorem is much simpler.  If mixed cumulants vanish, then we may expand \(\alpha_{p,q}\left(\mathring{a}_{1},\ldots,\mathring{a}_{p},\mathring{b}_{1},\ldots,\mathring{b}_{q}\right)\) as (\ref{formula: moment cumulant}).  Of the first and second sums, over \(S_{\mathrm{ann-nc}}\left(p,q\right)\), all terms with single-element cycles vanish.  Of the terms in the third sum, over \(\left({\cal U},\pi\right)\in\mathit{PS}^{\prime}\left(p,q\right)\), each \(\pi\) has more than two single-element cycles (Lemma~\ref{lemma: singlets}), so not all such cycles may be contained in the nontrivial block of \({\cal U}\), and thus all such terms vanish.  The remaining terms from the first two sums are those characterized by part~\ref{item: annular spoke} of Lemma~\ref{lemma: singlets}, which gives us (\ref{formula: second-order freeness}).
\end{proof}

\section{Vertex cumulants}

\label{section: matrix}

\begin{definition}
Denote the M\"{o}bius function on \({\cal P}\left(I\right)\) by \(\mu_{{\cal P}}\) (\(I\) will be clear from context).  Then
\[\mu_{{\cal P}}\left({\cal U},{\cal V}\right)=\prod_{V\in{\cal V}}\left(-1\right)^{\#\left(\left.{\cal U}\right|_{V}\right)-1}\left(\#\left(\left.{\cal U}\right|_{V}\right)-1\right)!\]
(see \cite{MR2266879}, exercises to Chapter~10).
\end{definition}

\begin{definition}
\label{definition: partition geodesic}
For \({\cal U},{\cal V}\in{\cal P}\left(n\right)\), let \(\Gamma\left({\cal U},{\cal V}\right)\) be the set of partitions \({\cal W}\in{\cal P}\left(n\right)\) such that
\[\#\left({\cal U}\right)-\#\left({\cal U}\vee{\cal W}\right)=\#\left({\cal U}\vee{\cal V}\right)-\#\left({\cal U}\vee{\cal V}\vee{\cal W}\right)\textrm{.}\]
\end{definition}
See, e.g.\ \cite{2015arXiv151101087R} for a proof that for any \({\cal W}\), the left-hand side is greater than or equal to the right-hand side, so the \({\cal W}\in\Gamma\left({\cal U},{\cal V}\right)\) are those maximizing the right-hand side.

\begin{definition}
For random variables \(Y_{1},\ldots,Y_{n}\) and \({\cal U}\in{\cal P}\left(n\right)\), the (classical) moment over \({\cal U}\) is
\[a_{{\cal U}}\left(Y_{1},\ldots,Y_{n}\right):=\prod_{U\in{\cal U}}\mathbb{E}\left(\prod_{k\in U}Y_{k}\right)\textrm{.}\]

For \({\cal U}\in{\cal P}\left(n\right)\), we define the {\em classical cumulants} \(k_{{\cal U}}\) over \(n\) random variables \(Y_{1},\ldots,Y_{n}\) by
\[a_{{\cal U}}\left(Y_{1},\ldots,Y_{n}\right)=:\sum_{{\cal V}\preceq{\cal U}}k_{{\cal V}}\left(Y_{1},\ldots,Y_{n}\right)\]
or equivalently
\[k_{{\cal U}}\left(Y_{1},\ldots,Y_{n}\right):=\sum_{{\cal V}\preceq{\cal U}}\mu_{{\cal P}}\left({\cal V},{\cal U}\right)a_{{\cal V}}\left(Y_{1},\ldots,Y_{n}\right)\textrm{.}\]
For \(n=1,2,\ldots\),
\[k_{n}\left(Y_{1},\ldots,Y_{n}\right):=k_{1_{{\cal P}\left(n\right)}}\left(Y_{1},\ldots,Y_{n}\right)=\sum_{{\cal U}\in{\cal P}\left(n\right)}\mu_{{\cal P}}\left({\cal U},1\right)a_{{\cal U}}\left(Y_{1},\ldots,Y_{n}\right)\textrm{.}\]
\end{definition}

\begin{definition}
\label{definition: premaps}
Throughout, we will denote \(\delta:k\mapsto -k\).

For \(I\subseteq\left[n\right]\), we define \(\mathit{PM}\left(I\right)\) as the set of permutations \(\pi\) on the set \(\left\{k,-k:k\in I\right\}\) such that \(\pi\left(k\right)=-\pi^{-1}\left(-k\right)\) and such that no \(k\) is in the same cycle as \(-k\).  The cycles of such  \(\pi\) consist of pairs where each cycle in a pair may be obtained from the other by reversing the order and the signs on each of the integers.  See, e.g., \cite{MR1813436, 2012arXiv1204.6211R} for an explanation of how elements of \(\mathit{PM}\left(I\right)\) correspond to maps on unoriented surfaces and the topological interpretations of the related definitions.  We denote \(\mathit{PM}\left(\left[n\right]\right)\) by \(\mathit{PM}\left(n\right)\).

For \(\pi\in\mathit{PM}\left(I\right)\), we define \(\Pi\left(\pi\right)\) to be the partition, each of whose blocks corresponds to a pair of cycles described above, containing the absolute values of the integers in that pair of cycles.  To simplify notation, we denote the restriction of \(\pi\) to \(J\cup\left(-J\right)\) by \(\left.\pi\right|_{J}\) (where \(-J=\left\{-k:k\in J\right\}\)).

For \(\pi\in\mathit{PM}\left(I\right)\), we may choose one from each pair of cycles by choosng the cycle where the integer with the lowest absolute value appears with a positive sign.  We define \(\mathit{FD}\left(\pi\right)\subseteq I\) as the set of elements appearing in the cycles thus chosen.  We define \(\pi_{+}:=\left.\pi\right|_{\mathit{FD}\left(I\right)}\) and \(\pi_{-}:=\delta\pi_{+}^{-1}\delta\) (i.e.\ \(\pi_{+}\) is the product of the cycles chosen above, and \(\pi_{-}\) is the product of the cycles which were not chosen).  (When these permutations appear in an matrix expression, typically which cycle of each pair was chosen for \(\pi_{+}\) is arbitrary and does not change the value of the expression.)

For \(\pi,\rho\in\mathit{PM}\left(I\right)\) we define the Kreweras complement of \(\rho\) (relative to \(\pi\)) as
\[\mathrm{Kr}_{\pi}\left(\rho\right):=\pi_{-}^{-1}\rho^{-1}\pi_{+}\textrm{.}\]
For \(\pi\in S\left(I\right)\), we let \(\mathrm{Kr}_{\pi}\left(\rho\right):=\mathrm{Kr}_{\pi\delta\pi^{-1}\delta}\left(\rho\right)\).

We define the Euler characteristic of \(\rho\) (relative to \(\pi\)) by
\[\chi_{\pi}\left(\rho\right):=\#\left(\pi\right)/2+\#\left(\rho\right)/2+\#\left(\mathrm{Kr}_{\pi}\left(\rho\right)\right)/2-n\textrm{.}\]
We say that \(\rho\in\mathit{PM}\left(I\right)\) is noncrossing (on \(\pi\)) if \(\chi_{\pi}\left(\rho\right)=\#\left(\pi\right)\) (see \cite{MR3217665} for a proof that the left-hand side is less than or equal to the right-hand side).  We denote the set of such \(\rho\) by \(\mathit{PM}_{\mathrm{nc}}\left(\pi\right)\).

We define \(\mathit{PPM}\left(\tau\right)\subseteq{\cal P}\left(I\right)\times\mathit{PM}\left(I\right)\) as the ordered pairs \(\left({\cal U},\pi\right)\) with \({\cal U}\succeq\Pi\left(\pi\right)\).  For \(\tau\in S\left(I\right)\), we define \(\mathit{PPM}^{\prime}\left(\tau\right)\subseteq\mathit{PPM}\left(I\right)\) as the \(\left({\cal U},\pi\right)\) where \(\pi\in\mathit{PM}_{\mathrm{nc}}\left(\tau\right)\) and \({\cal U}\in\Gamma\left(\Pi\left(\tau\right),\Pi\left(\pi\right)\right)\).  (Such \({\cal U}\) can be thought of as those that join the smallest number of blocks of \(\Pi\left(\pi\right)\) for a resulting partition \(\Pi\left(\tau\right)\vee{\cal U}\).)  As usual we denote \(\mathit{PPM}\left(\tau_{r_{1},\ldots,r_{k}}\right)\) and \(\mathit{PPM}^{\prime}\left(\tau_{r_{1},\ldots,r_{k}}\right)\) by \(\mathit{PPM}\left(r_{1},\ldots,r_{k}\right)\) and \(\mathit{PPM}^{\prime}\left(r_{1},\ldots,r_{k}\right)\) respectively.

We extend the notation \(f_{\pi}\) from \(S\left(I\right)\) to \(\mathit{PM}\left(I\right)\), for the functions \(f\) for which \(f_{\pi}\) is defined.  We let \(X_{-k}:=X_{k}^{T}\).  Then for \(\pi\in\mathit{PM}\left(I\right)\), we let \(f_{\pi}:=f_{\pi^{+}}\).

We modify this notation to express classical cumulants of traces.  For
\[\pi_{+}=\left(c_{1},\ldots,c_{r_{1}}\right)\cdots\left(c_{r_{1}+\cdots+r_{m-1}+1},\ldots,c_{r_{1}+\cdots+r_{m}}\right)\]
we define
\[k_{\pi}\left(X_{1},\ldots,X_{n}\right)=k_{m}\left(\mathrm{tr}\left(X_{c_{1}}\cdots X_{c_{n_{1}}}\right),\ldots,\mathrm{tr}\left(X_{c_{n_{1}+\cdots+n_{r-1}+1}}\cdots X_{c_{n}}\right)\right)\textrm{.}\]

We extend this notation to \(\left({\cal U},\pi\right)\in\mathit{PPM}\left(n\right)\) by letting
\[f_{\left({\cal U},\pi\right)}\left(X_{1},\ldots,X_{n}\right)=\prod_{U\in{\cal U}}f_{\left.\pi\right|_{U}}\left(X_{1},\ldots,X_{n}\right)\textrm{.}\]
\end{definition}

The following result is shown in \cite{MR3217665}:
\begin{lemma}
\label{lemma: annular premaps}
For \(\tau:=\tau_{p,q}\) with two cycles, the \(\pi\in S_{\mathrm{nc}}\left(p,q\right)\) may be divided into three disjoint sets, which are respectively in bijection with \(S_{\mathrm{disc-nc}}\left(p,q\right)\), \(S_{\mathrm{ann-nc}}\left(p,q\right)\), and \(S_{\mathrm{ann-nc}}\left(p,q\right)\).  For each \(\pi\) in the union of these three sets we will construct a permutation \(\pi^{\prime}\in S_{\mathrm{nc}}\left(p,q\right)\) as described below.

In the first two cases, \(\pi\) never takes any element of \(\left[p+q\right]\) to \(-\left[p+q\right]\).  For such \(\pi\) we let \(\pi^{\prime}:=\left.\pi\right|_{\left[p+q\right]}\).  In the third case, \(\pi^{\prime}\) never takes any element of \(\left[p\right]\cup\left(-\left[p+1,p+q\right]\right)\) to an element of \(\left(-\left[p\right]\right)\cup\left[p+1,p+q\right]\).  Here we construct \(\pi^{\prime}\in S_{p+q}\) from \(\left.\pi\right|_{\left[p\right]\cup\left(-\left[p+1,p+q\right]\right)}\) by replacing elements \(-\left(p+1\right),\ldots,-\left(p+q\right)\) with \(p+q,\ldots,p-1\) respectively.

In each case, the integer partition whose parts are the sizes of blocks in \(\Pi\left(\mathrm{Kr}_{\tau}\left(\pi^{\prime}\right)\right)\) is the same as that whose parts are the sizes of the blocks of \(\Pi\left(\mathrm{Kr}_{\tau}\left(\pi\right)\right)\).
\end{lemma}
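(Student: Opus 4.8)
This is established in \cite{MR3217665}; the argument runs as follows, and the plan here is to translate everything through the topological dictionary between premaps and unoriented gluings of polygons (see \cite{MR1813436, 2012arXiv1204.6211R}). Under this dictionary the two cycles of $\tau_{p,q}$ are the boundary polygons, the premap $\pi$ is the gluing of their sides, and the noncrossing condition together with the conditions $\Pi(\tau)\vee\mathcal{U}=1$ and $\mathcal{U}\in\Gamma(\Pi(\tau),\Pi(\pi))$ defining $\mathrm{PPM}'$ translate into planarity and connectedness of the glued surface; so the task is a census of these gluings, organised by how the gluing interacts with orientation.

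First I would classify a noncrossing $\pi$ by how it interacts with the sign involution $k\mapsto -k$: either it preserves $[p+q]$ (the ``untwisted'' gluings), or it preserves the set $[p]\cup(-[p+1,p+q])$, in which the second polygon is attached with its orientation reversed. One shows these exhaust the possibilities, and that no other local configuration survives, using the Euler-characteristic inequality $\chi_\tau(\pi)\leq 2\#(\Pi(\tau))$ together with the local characterisations of Biane (Lemma~\ref{lemma: disc noncrossing}) and Mingo--Nica. In the untwisted case, set $\pi':=\pi^{-1}|_{[p+q]}$, an honest permutation of $[p+q]$; according to whether all of its cycles stay inside one cycle of $\tau$ or not, $\pi'$ lands in $S_{\mathrm{disc-nc}}(p,q)$ (the first set) or has a bridge and lands in $S_{\mathrm{ann-nc}}(p,q)$ (the second set). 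In the twisted case, the relabelling $-(p+j)\mapsto p+q+1-j$ of the vertices of the second polygon turns $\pi^{-1}$ into an honest permutation $\pi'$ of $[p+q]$, and the twist is precisely the assertion that $\pi'$ must link the two halves, so $\pi'\in S_{\mathrm{ann-nc}}(p,q)$, the third set. Along the way the partition $\mathcal{U}$ is recorded as the minimal coarsening of $\Pi(\pi)$ that the connectivity condition $\Pi(\tau)\vee\mathcal{U}=1$ forces, which the $\Gamma$-condition makes consistent.

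With the three maps in hand I would check bijectivity. Injectivity is immediate, since $\pi$ (and $\mathcal{U}$) is reconstructed from $\pi'$ by the premap mirror completion together with the recorded fused blocks. Surjectivity amounts to verifying that, for each $\pi'$ in the appropriate target set, the premap obtained by completion --- and, in the twisted case, by undoing the relabelling --- genuinely lies in $\mathrm{PM}_{\mathrm{nc}}(\tau_{p,q})$ with $\chi_\tau(\pi)=4$ and admits a valid $\mathcal{U}$; this is exactly where the forbidden patterns of Biane and Mingo--Nica enter, each forbidden pattern for $\pi'$ being matched with a drop of $\chi_\tau(\pi)$ below $4$. The equality of the integer partitions of $\Pi(\mathrm{Kr}_\tau(\pi'))$ and $\Pi(\mathrm{Kr}_\tau(\pi))$ then follows by pushing $\mathrm{Kr}_\tau(\pi)=(-\tau^{-1})\pi^{-1}\tau$ through the correspondence: in the untwisted cases $\pi$ is just $\pi'$ together with its mirror, so $\mathrm{Kr}_\tau(\pi)$ is the ordinary Kreweras complement of $(\pi')^{-1}$ mirrored and the cycle types agree; in the twisted case the orientation reversal on the second polygon is cancelled by the reversal built into $-\tau^{-1}$, again preserving cycle lengths. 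The main obstacle is the twisted case: one must check that the relabelling is a well-defined involution at the level of premaps, that it carries the twist-preserving noncrossing premaps onto all of $S_{\mathrm{ann-nc}}(p,q)$ rather than a proper subset, and that the $\mathcal{U}$-bookkeeping meshes with the two untwisted cases, so that the three sets are disjoint and together exhaust $\mathrm{PPM}'(\tau_{p,q})$.
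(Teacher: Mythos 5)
The paper offers no proof of this lemma: it is quoted from \cite{MR3217665}, with only the remark that the third family consists of the planar gluings in which the second disc is ``flipped over.''  Your organizing principle --- classify the planar gluings of the two polygons by whether the sign involution is respected, i.e.\ by whether $\pi$ stabilizes $\left[p+q\right]$ or stabilizes $\left[p\right]\cup\left(-\left[p+1,p+q\right]\right)$ --- is exactly the intended one, and the splitting of the untwisted case by the presence of a bridge correctly recovers the disc/annular dichotomy.  Two things, however, keep the sketch from being a proof.

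First, a concrete error: you repeatedly require $\chi_{\tau}\left(\pi\right)=4$ as the noncrossing condition (in the surjectivity check, and in matching forbidden patterns to ``a drop of $\chi_{\tau}\left(\pi\right)$ below $4$'').  The condition actually used in the paper (see the proof of Proposition~\ref{proposition: higher free cumulants}) is $\chi_{\tau}\left(\pi\right)=2\#\left(\Pi\left(\tau\right)\vee\Pi\left(\pi\right)\right)$: each connected component of the glued surface is a sphere.  For the second and third sets --- precisely the ones in bijection with $S_{\mathrm{ann-nc}}\left(p,q\right)$ --- the premap joins the two cycles of $\tau_{p,q}$, so $\#\left(\Pi\left(\tau\right)\vee\Pi\left(\pi\right)\right)=1$ and the correct value is $\chi_{\tau}\left(\pi\right)=2$; insisting on $4$ would make both of these sets empty and the surjectivity verification as you describe it would fail.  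Second, the one claim that carries the real content of the lemma --- that every $\pi\in\mathrm{PM}_{\mathrm{nc}}\left(\tau_{p,q}\right)$ stabilizes one of the two sets above, i.e.\ that no gluing which identifies two sides of the \emph{same} polygon with a reversal, or which flips only ``part'' of the second polygon, can attain the maximal Euler characteristic --- is asserted (``one shows these exhaust the possibilities'') but never argued.  Everything after that point is relabelling and bookkeeping; this exhaustiveness step, which amounts to showing that a non-orientable identification forces a cross-cap and hence $\chi_{\tau}\left(\pi\right)<2\#\left(\Pi\left(\tau\right)\vee\Pi\left(\pi\right)\right)$, is where the work lives, and it is the step a referee would ask you to supply.
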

See Figure~\ref{figure: moment cumulant} for the topological intuition.  The last case may be interpreted as the planar cases in which the second disc has been ``flipped over'' relative to the first.

\begin{definition}[Collins, \'{S}niady]
The {\em (real) Weingarten function} \(\mathrm{Wg}:{\cal P}\rightarrow\mathbb{R}\) is defined in \cite{MR2217291, MR2567222}.  For \(N>0\) and \({\cal U}_{1},{\cal U}_{2}\in{\cal P}_{2}\left(2n\right)\), we let \(\mathrm{Gr}\left({\cal U}_{1},{\cal U}_{2}\right)=N^{\#\left({\cal U}_{1}\vee{\cal U}_{2}\right)}\).  The Weingarten function \(\mathrm{Wg^{N}}:{\cal P}_{2}\left(2n\right)\rightarrow\mathbb{R}\) is given by
\[\mathrm{Wg}^{N}\left({\cal U}_{1},{\cal U}_{2}\right)=\left[\mathrm{Gr}\left({\cal U}_{1},{\cal U}_{2}\right)\right]_{{\cal U}_{1},{\cal U}_{2}}^{-1}\]
(the right-hand side being interpreted as a matrix inverse of a matrix with indices in \({\cal P}_{2}\left(2n\right)\)).  We will suppress \(n\) and \(N\) in the notation.  We define the Weingarten function \(\mathrm{Wg}:{\cal P}\left(n\right)\rightarrow\mathbb{R}\) to be equal to \(\mathrm{Wg}\left({\cal U}_{1},{\cal U}_{2}\right)\) where each block of \({\cal U}\) corresponds bijectively to a block of \({\cal U}_{1}\vee{\cal U}_{2}\) with exactly twice as many elements.

For large \(N\), \(\mathrm{Wg}\left({\cal U}\right)={\cal O}\left(N^{n-\#\left({\cal U}\right)}\right)\).  We define the normalized Weingarten function \(\mathrm{wg}\left({\cal U}\right):=N^{-n+\#\left({\cal U}\right)}\mathrm{Wg}\left({\cal U}\right)\).

It is possible to define a sort of cumulant of the Weingarten function for \({\cal U},{\cal V},{\cal W}\in{\cal P}\left(n\right)\) with \({\cal U}\preceq{\cal V}\preceq{\cal W}\):
\[\mathrm{wg}_{{\cal U},{\cal V},{\cal W}}:=\sum_{{\cal X}:{\cal V}\preceq{\cal X}\preceq{\cal W}}\mu_{{\cal P}}\left({\cal X},{\cal W}\right)\prod_{X\in{\cal X}}\mathrm{wg}\left(\left.{\cal U}\right|_{X}\right)\]
or equivalently (standard property of M\"{o}bius functions)
\[\mathrm{wg}\left({\cal U}\right)=\sum_{{\cal X}:{\cal V}\preceq{\cal X}\preceq{\cal W}}\prod_{X\in{\cal X}}\mathrm{wg}_{{\cal U},{\cal X},{\cal W}}\textrm{.}\]
In our uses the middle term \({\cal V}\) will typically be equal to the first term \({\cal U}\).  When this is the case, it may be omitted.

Asymptotically
\[\mathrm{wg}_{{\cal U},{\cal V}}=\gamma_{{\cal U},{\cal V}}N^{-2\left(\#\left({\cal U}\right)-\#\left({\cal V}\right)\right)}+{\cal O}\left(N^{-2\left(\#\left({\cal U}\right)-\#\left({\cal V}\right)\right)-1}\right)\textrm{.}\]
See \cite{MR2217291, MR1761777, 2015arXiv151101087R} for more detail.
\end{definition}

The following monomial integration formula is from \cite{MR2217291}; see also \cite{MR2567222}, and \cite{2015arXiv151101087R} for this formulation.
\begin{theorem}[Collins, \'{S}niady]
Let \(O:\Omega\rightarrow M_{N\times N}\) be an \(N\times N\) be a random matrix whose probability measure is a Haar measure (from here on referred to as a Haar-distributed matrix) on the \(N\times N\) orthogonal matrices.  Let \(i^{+},i^{-}:\left[n\right]\rightarrow\left[N\right]\).  Then
\[\mathbb{E}\left(O_{i^{+}\left(1\right),i^{-}\left(1\right)}\cdots O_{i^{+}\left(2n\right),i^{-}\left(2n\right)}\right)=\sum_{\substack{{\cal U}_{+},{\cal U}_{-}\in{\cal P}_{2}\left(2n\right)\\i^{\pm}\left(r\right)=i^{\pm}\left(s\right):\left\{r,s\right\}\in{\cal U}_{\pm}}}\mathrm{Wg}\left({\cal U}_{+},{\cal U}_{-}\right)\textrm{.}\]
\label{theorem: Weingarten}
\end{theorem}

\begin{definition}[Capitaine, Casalis]
If \(X_{1},\ldots,X_{n}:\Omega\rightarrow M_{N\times N}\left(\mathbb{C}\right)\) are random matrices, then the (normalized) matrix cumulants \(c_{\rho}\), \(\rho\in\mathit{PM}\left(n\right)\) are defined by the equations, for \(\pi\in\mathit{PM}\left(n\right)\):
\begin{equation}
\label{formula: matrix moment-cumulant}
\mathbb{E}\left(\mathrm{tr}_{\pi}\left(X_{1},\ldots,X_{n}\right)\right)=:\sum_{\rho\in\mathit{PM}\left(n\right)}N^{\chi_{\pi}\left(\rho\right)-\#\left(\pi\right)}c_{\rho}\left(X_{1},\ldots,X_{n}\right)
\end{equation}
or equivalently,
\begin{multline}
\label{formula: matrix cumulant-moment}
c_{\rho}\left(X_{1},\ldots,X_{n}\right)\\:=\sum_{\pi\in\mathit{PM}\left(n\right)}N^{\chi_{\rho}\left(\pi\right)-\#\left(\rho\right)}\mathrm{wg}\left(\Pi\left(\mathrm{Kr}_{\rho}\left(\pi\right)\right)\right)\mathbb{E}\left(\mathrm{tr}_{\pi}\left(X_{1},\ldots,X_{n}\right)\right)\textrm{.}
\end{multline}
(We will use the term {\em matrix moment} to refer to expressions of the form of the left-hand side of (\ref{formula: matrix moment-cumulant}), which is slightly more general than the usual usage.)

As usual \(c_{r_{1},\ldots,r_{s}}:=c_{\tau_{r_{1},\ldots,r_{s}}}\).  For \(\left({\cal U},\pi\right)\in\mathit{PPM}\left(n\right)\) we define
\[c_{\left({\cal U},\pi\right)}=\left(X_{1},\ldots,X_{n}\right):=\prod_{U\in{\cal U}}c_{\left.\pi\right|_{U}}\left(X_{1},\ldots,X_{n}\right)\textrm{.}\]
\end{definition}

For reference, for \(\pi\in\mathit{PM}\left(n\right)\), we write an expression for the corresponding matrix cumulant with both the Weingarten functions and the matrix moments expanded in cumulants:
\begin{multline}
c_{\pi}\left(X_{1},\ldots,X_{n}\right)\\=\sum_{\substack{\left({\cal V},\rho\right)\in\mathrm{PPM}\left(n\right)\\{\cal W}\succeq\Pi\left(\mathrm{Kr}_{\pi}\left(\rho\right)\right)}}N^{\chi_{\pi}\left(\rho\right)-2\#\left(\Pi\left(\pi\right)\right)}\mathrm{wg}_{\Pi\left(\mathrm{Kr}_{\pi}\left(\rho\right)\right),{\cal W}}k_{\left({\cal V},\rho\right)}\left(X_{1},\ldots,X_{n}\right)\textrm{.}
\label{formula: expanded cumulant}
\end{multline}

We provide a sketch of the proof of the following theorem from \cite{MR2337139} in the notation of this paper in order to highlight the connection of the matrix cumulants with topological expansion.  See also \cite{MR2240781, MR2483727, 2015arXiv151101087R, 2012arXiv1204.6211R, MR3455672} for more detail.  Combinatorial details such as the bijection \({\cal P}_{2}\left(\pm\left[n\right]\right)\rightarrow\mathit{PM}\left(n\right)\) and the correspondence between \({\cal U}\vee{\cal V}\) and \({\cal U}\delta{\cal V}\) may be filled in by the reader, or see \cite{2015arXiv151101087R, MR3455672}.  (Roughly speaking, if, in the top part of Figure~\ref{figure: orthogonally invariant}, we fix a pairing such as the one shown in solid lines, this fixes the topological gluing.  We may calculate the contribution of such a gluing by letting the other pairing (an example is shown by the dotted lines) vary over all pairings, and summing.  The resulting value is the matrix cumulant associated to that gluing.  If subsets of the matrices are independent, they may be treated as different ``colours'' with which the edges associated to the matrices are coloured, and only gluings compatible with the colouring need be considered.  The contribution is the product over the contribution of each colour.  See Example~\ref{example: gluing})

\begin{theorem}[Capitaine, Casalis]
Formulas (\ref{formula: matrix moment-cumulant}) and (\ref{formula: matrix cumulant-moment}) are equivalent.

Let \(s>0\) be an integer, and let \(w:\left[n\right]\rightarrow\left[s\right]\) be a word in the set of ``colours'' \(\left[s\right]\), and denote the partition of \(\left[n\right]\) whose blocks are (nontrivial) preimages of integers in \(\left[s\right]\) by \(\ker\left(w\right):=\left\{w^{-1}\left(k\right)\neq\emptyset:k\in\left[s\right]\right\}\).  Let \(X_{1},\ldots,X_{n}\) be orthogonally invariant random matrices such that \(X_{i}\) is independent from \(X_{j}\) whenever \(w\left(i\right)\neq w\left(j\right)\).  Then for \(\pi\in\mathit{PM}\left(n\right)\),
\[c_{\pi}\left(X_{1},\ldots,X_{n}\right)=\left\{\begin{array}{ll}\prod_{k\in\left[s\right]}c_{\left.\pi\right|_{w^{-1}\left(k\right)}}\left(X_{1},\ldots,X_{n}\right)&\Pi\left(\pi\right)\preceq\ker\left(w\right)\\0\textrm{,}&\Pi\left(\pi\right)\npreceq\ker\left(w\right)\end{array}\right.\textrm{.}\]
\label{theorem: matrix cumulants}
\end{theorem}
\begin{proof}[Sketch of proof]
We begin by writing a monomial integral formula (expected value of the product of arbitrarily indexed entries) for the random matrices in terms of the matrix cumulants as given by (\ref{formula: matrix cumulant-moment}).  Let \(O\) be a Haar-distributed matrix and let \(i:\pm\left[n\right]\rightarrow\left[N\right]\) be defined so the desired indices of matrx \(X_{k}\) are \(i\left(k\right)i\left(-k\right)\).  Then:
\begin{multline*}
\mathbb{E}\left(X^{\left(1\right)}_{i\left(1\right)i\left(-1\right)}\cdots X^{\left(n\right)}_{i\left(n\right)i\left(-n\right)}\right)=\mathbb{E}\left[\left(OX_{1}O^{T}\right)_{i\left(1\right)i\left(-1\right)}\cdots\left(OX_{n}O^{T}\right)_{i\left(n\right)i\left(-n\right)}\right]\\=\sum_{j:\pm\left[n\right]\rightarrow\left[N\right]}\mathbb{E}\left[\prod_{k\in\pm\left[n\right]}O_{i\left(k\right)j\left(k\right)}\right]\mathbb{E}\left[\prod_{k\in\left[n\right]}X^{\left(k\right)}_{j\left(k\right)j\left(-k\right)}\right]\textrm{.}
\end{multline*}
Applying Theorem~\ref{theorem: Weingarten} to the first expected value:
\[\sum_{j:\pm\left[n\right]\rightarrow\left[N\right]}\sum_{\substack{{\cal U},{\cal V}\in{\cal P}_{2}\left(\pm\left[n\right]\right)\\i\left(r\right)=i\left(s\right):\left\{r,s\right\}\in{\cal U}\\j\left(r\right)=j\left(s\right):\left\{r,s\right\}\in{\cal V}}}\mathrm{Wg}\left({\cal U},{\cal V}\right)\mathbb{E}\left[\prod_{k\in\left[n\right]}X^{\left(k\right)}_{j\left(k\right)j\left(-k\right)}\right]\textrm{.}\]

We collect terms corresponding to a fixed \({\cal U}\), while we consider all possible pairings \({\cal V}\) on the other set of indices and sum over the resulting terms).  We note first that summing over all \(j\) satisfying the constraints given by a fixed \({\cal V}\) gives a product of traces of \(X_{k}\):
\[\sum_{\substack{j:\pm\left[n\right]\rightarrow\left[N\right]\\j\left(r\right)=j\left(s\right):\left\{r,s\right\}\in{\cal V}}}\mathbb{E}\left[\prod_{k\in\left[n\right]}X^{\left(k\right)}_{j\left(k\right)j\left(-k\right)}\right]=\mathbb{E}\left(\mathrm{Tr}_{{\cal V}\delta}\left(X_{1},\ldots,X_{n}\right)\right)\]
where we are using \({\cal V}\) to denote the unique \(\Pi^{-1}\left({\cal V}\right)\in S\left(\pm\left[n\right]\right)\).  Secondly, \(\mathrm{Wg}\left({\cal U},{\cal V}\right)=\mathrm{Wg}\left(\Pi\left(\mathrm{Kr}_{{\cal U}\delta}\left({\cal V}\delta\right)\right)\right)\).  Thus, the contribution for a fixed \({\cal U}\) is
\[\prod_{\left\{r,s\right\}\in{\cal U}}\delta_{i\left(r\right),i\left(s\right)}N^{\#\left(\rho\right)/2-n}c_{\rho}\left(X_{1},\ldots,X_{n}\right)\]
so for \(\pi\in\mathit{PM}\left(n\right)\) the matrix moment is
\[\mathbb{E}\left(\mathrm{tr}_{\pi}\left(X_{1},\ldots,X_{n}\right)\right)=N^{-\#\left(\pi\right)/2}\sum_{\substack{i:\pm\left[n\right]\rightarrow\left[N\right]\\i\circ\delta=i\circ\pi}}\mathbb{E}\left(X^{\left(1\right)}_{i\left(1\right),i\left(-1\right)}\cdots X^{\left(n\right)}_{i\left(n\right),i\left(-n\right)}\right)\textrm{,}\]
from which (\ref{formula: matrix moment-cumulant}) follows, since for a given \(\rho\in\mathit{PM}\left(n\right)\) there are \(N^{\#\left(\mathrm{Kr}_{\rho}\left(\pi\right)\right)}\) possible \(i\).

Since there are \(\left|\mathit{PM}\left(n\right)\right|\) matrix moments which may be calculated from the \(\left|\mathit{PM}\left(n\right)\right|\) cumulants, (\ref{formula: matrix moment-cumulant}) also implies (\ref{formula: matrix cumulant-moment}).

To show the multi-matrix formula, we construct \(s\) independent Haar-distributed matrices \(O_{1},\ldots,O_{s}\) and repeat the above calculations conjugating matrix \(X_{k}\) by \(O_{w\left(k\right)}\).  By the hypothesized independence, the monomial integration formula may be written as the product of the corresponding monomial integration formulas for each colour \(k\in\left[s\right]\).  Since the proposed expression for the cumulants then satisfies (\ref{formula: matrix moment-cumulant}), they must be equal to the cumulants as given by (\ref{formula: matrix cumulant-moment}).
\end{proof}

Considering \(\rho\) as representing a surface gluing, each matched pair of cycles of \(\rho\) represents a vertex containing \(X_{k}\) matrices (Figure~\ref{figure: orthogonally invariant}).  However, the contribution of the vertices is not multiplicative.  In the following definition we consider quantities representing the extent to which the cumulants \(c_{\rho}\) deviate from being multiplicative, with a construction parallel to the construction of the classical cumulants.

\begin{definition}
For \(n=1,2,\ldots\) and \(\left({\cal U},\pi\right)\in\mathit{PPM}\left(n\right)\), we define the vertex cumulants \(K_{\left({\cal U},\pi\right)}\) to be multilinear functions of \(n\) random matrices which are multiplicative over the blocks of \({\cal U}\) (i.e.
\[K_{\left({\cal U},\pi\right)}\left(X_{1},\ldots,X_{n}\right)=\prod_{U\in{\cal U}}K_{\left(\left\{U\right\},\left.\pi\right|_{U}\right)}\left(X_{1},\ldots,X_{n}\right)\]
where each \(K_{\left(\left\{U\right\},\left.\pi\right|_{U}\right)}\) is a function of only the \(X_{k}\) with \(k\in U\)) satisfying
\begin{equation}
c_{\pi}\left(X_{1},\ldots,X_{n}\right)=:\sum_{\left({\cal U},\pi\right)\in\mathit{PPM}\left(n\right)}K_{\left({\cal U},\pi\right)}\left(X_{1},\ldots,X_{n}\right)
\label{formula: matrix-vertex}
\end{equation}
or equivalently, for \(\left({\cal U},\pi\right)\in\mathit{PPM}\left(n\right)\):
\begin{equation}
K_{\left({\cal U},\pi\right)}:=\sum_{\substack{\left({\cal V},\pi\right)\in\mathit{PPM}\left(n\right)\\{\cal V}\preceq{\cal U}}}\mu_{{\cal P}}\left({\cal V},{\cal U}\right)c_{\left({\cal V},\pi\right)}\left(X_{1},\ldots,X_{n}\right)
\label{formula: vertex-matrix}
\end{equation}
(the equivalence of (\ref{formula: matrix-vertex}) and (\ref{formula: vertex-matrix}) follows from a standard properties of M\"{o}bius functions).

If \({\cal U}\) is omitted, we will take it to be equal to \(1\in{\cal P}\left(n\right)\):
\[K_{\pi}\left(X_{1},\ldots,X_{n}\right)=\sum_{\left({\cal V},\pi\right)\in\mathit{PPM}\left(n\right)}\mu_{{\cal P}}\left({\cal V},1\right)c_{\left({\cal V},\pi\right)}\left(X_{1},\ldots,X_{n}\right)\textrm{.}\]
We will denote \(K_{\tau_{r_{1},\ldots,r_{k}}}\) by \(K_{r_{1},\ldots,r_{k}}\).
\label{definition: vertex cumulants}
\end{definition}

\begin{example}
\label{example: gluing}
The diagram shown in Figure~\ref{figure: orthogonally invariant} has traces indexed by \(\pi\in\mathit{PM}\left(7\right)\) with
\[\pi=\left(1,2,3\right)\left(-3,-2,-1\right)\left(4,5,6,7\right)\left(-7,-6,-5,-4\right)\textrm{.}\]
The gluing corresponds to the matrix cumulant \(c_{\rho}\left(X_{1},\ldots,X_{7}\right)\) with
\[\rho=\left(1,-4,5\right)\left(-5,4,-1\right)\left(2\right)\left(-2\right)\left(3\right)\left(-3\right)\left(6,-7\right)\left(7,-6\right)\]
(compare vertices in Figure~\ref{figure: vertices}).  The upper diagram shows how this cumulant corresponds to the contributions from the topological expansion of Haar-distributed orthogonal matrices where the pairing \({\cal U}\) on the first indices of \(O\) is (indexed as in Theorem~\ref{theorem: matrix cumulants}):
\[{\cal U}=\left\{\left\{1,-5\right\}\left\{-1,-4\right\}\left\{2,-2\right\}\left\{3,-3\right\}\left\{4,5\right\}\left\{6,7\right\}\left\{-6,-7\right\}\right\}\textrm{.}\]
Note that \({\cal U}=\Pi\left(\rho\delta\right)\).

In dotted lines, an example pairing \({\cal V}\) is shown on the second indices of the \(O\) matrices:
\[{\cal V}=\left\{\left\{1,-5\right\}\left\{-1,2\right\}\left\{-2,-3\right\}\left\{3,-6\right\}\left\{4,-7\right\}\left\{-4,7\right\}\left\{5,6\right\}\right\}\]
which corresponds to a trace of the \(X_{k}\) along permutation
\[\sigma:={\cal V}\delta=\left(1,2,-3,-6,5\right)\left(-5,6,3,-2,-1\right)\left(4,7\right)\left(-7,-4\right)\]
(compare the loops following the dotted lines through \(X_{k}\) vertices).

The Weingarten function depends on the partition
\[{\cal U}\vee{\cal V}=\left\{\left\{1,-5\right\},\left\{-1,2,-2,3,-3,4,-4,5,6,-6,7,-7\right\}\right\}\]
i.e.\ on integer partition \(6,1\) (compare the lengths of the cycles of permutation \(\chi_{\rho}\left(\sigma\right)\) as well as the lengths of the loops following alternately solid and dotted lines through edges containing \(O\) and \(O^{T}\)).

The value of the cumulant is
\begin{align*}
c_{\rho}&=N^{-2}\sum_{{\cal V}\in{\cal P}_{2}\left(\pm\left[7\right]\right)}\mathrm{Wg}\left({\cal U},{\cal V}\right)\mathbb{E}\left(\mathrm{Tr}_{{\cal V}\delta}\left(X_{1},\ldots,X_{7}\right)\right)
\\&=\sum_{\sigma\in\mathit{PM}\left(7\right)}N^{\chi_{\rho}\left(\sigma\right)-8}\mathrm{wg}\left(\Pi\left(\mathrm{Kr}_{\rho}\left(\sigma\right)\right)\right)\mathbb{E}\left(\mathrm{tr}_{\sigma}\left(X_{1},\ldots,X_{7}\right)\right)\textrm{.}
\end{align*}

The two-vertex cumulant between the top-left and bottom-right vertices in Figure~\ref{figure: vertices} is:
\begin{multline*}
K_{\left(1,5,-4\right)\left(6,-7\right)}\left(X_{1},\ldots,X_{7}\right)=K_{3,2}\left(X_{1},X_{5},X_{4}^{T},X_{6},X_{7}^{T}\right)\\=c_{\left(1,5,-4\right)\left(6,-7\right)}\left(X_{1},\ldots,X_{7}\right)-c_{\left(1,5,-4\right)}\left(X_{1},\ldots,X_{7}\right)c_{\left(6,-7\right)}\left(X_{1},\ldots,X_{7}\right)\\=c_{3,2}\left(X_{1},X_{5},X_{4}^{T},X_{6},X_{7}^{T}\right)-c_{3}\left(X_{1},X_{5},X_{4}^{T}\right)c_{2}\left(X_{6},X_{7}^{T}\right)\textrm{.}
\end{multline*}

We calculate
\[\mathrm{Kr}_{\pi}\left(\rho\right)=\pi_{-}^{-1}\rho^{-1}\pi_{+}=\left(1,2,3,5,-6,7,-4\right)\left(4,-7,6,-5,-3,-2,-1\right)\]
and \(\chi_{3,4}\left(\rho\right)=\#\left(\pi\right)/2+\#\left(\rho\right)/2+\#\left(\sigma\right)/2-n=2+4+1-7=0\) (the surface is a Klein bottle, since it is non-orientable) so the contribution of the surface gluing shown in Figure\ref{figure: orthogonally invariant} is \(N^{-2}c_{\rho}\left(X_{1},\ldots,X_{n}\right)\).
\end{example}

We present several lemmas.  Lemma~\ref{lemma: connected diagrams} provides an expression for vertex cumulants in terms of the cumulants of the Weingarten function and the matrices.  Lemma~\ref{lemma: vanishing cumulants} shows the vanishing of the vertex cumulants on independent, general position matrices.  Lemma~\ref{lemma: classical cumulant connected} gives an expression for the cumulant of traces of matrices in terms of the vertex cumulant, as a sum over connected diagrams.

\begin{lemma}
Let \(X_{1},\ldots,X_{n}:\Omega\rightarrow M_{N\times N}\left(\mathbb{C}\right)\) be random matrices with orthogonally invariant joint probability distribution, and \(\left({\cal U},\pi\right)\in PPM\left(n\right)\).  Then
\begin{multline}
K_{\left({\cal U},\pi\right)}\left(X_{1},\ldots,X_{n}\right)\\=\sum_{\substack{\left({\cal V},\rho\right)\in\mathit{PPM}\left(n\right)\\{\cal W}\succeq\Pi\left(\mathrm{Kr}_{\pi}\left(\rho\right)\right)\\\Pi\left(\pi\right)\vee{\cal V}\vee{\cal W}={\cal U}}}N^{\chi_{\pi}\left(\rho\right)-\#\left(\pi\right)}\mathrm{wg}_{\Pi\left(\mathrm{Kr}_{\pi}\left(\rho\right)\right),{\cal W}}k_{\left({\cal V},\rho\right)}\left(X_{1},\ldots,X_{n}\right)\textrm{,}
\label{formula: connected diagrams}
\end{multline}
i.e.\ the sum of the terms in the expansion (\ref{formula: expanded cumulant}) for which the join of the partition formed by the traces, the partition of the cumulants of the Weingarten function, and the partition of the classical cumulants is exactly the specified partition \({\cal U}\).
\label{lemma: connected diagrams}
\end{lemma}
\begin{proof}
We first verify that the quantities presented on the right-hand side of (\ref{formula: connected diagrams}) are multiplicative over the blocks of \({\cal U}\).  If we take the product over \(U\in{\cal U}\) of the conjectured expressions for \(K_{\left(\left\{{\cal U}\right\},\left.\pi\right|_{U}\right)}\left(X_{1},\ldots,X_{n}\right)\), we will have a sum over choices of \(\rho_{U}\in\mathit{PM}\left(U\right)\) for each \(U\in{\cal U}\), i.e.\ over \(\pi\in\mathit{PM}\left(n\right)\).  Noting that the exponent of \(N\) is additive over blocks of \({\cal U}\) and the two cumulants are multiplicative over the blocks of respectively \({\cal W}\) and \({\cal V}\) and therefore over the blocks of \({\cal U}\succeq{\cal W},{\cal V}\), the result will be the right-hand side of (\ref{formula: connected diagrams}) as desired.

Summing the right-hand side of (\ref{formula: connected diagrams}) over all \({\cal U}\succeq\Pi\left(\pi\right)\) gives us (\ref{formula: expanded cumulant}), so the conjectured expressions satisfy (\ref{formula: matrix-vertex}), demonstrating the lemma.
\end{proof}

\begin{lemma}
If \(X_{1},\ldots,X_{n}:\Omega\rightarrow M_{N\times N}\left(\mathbb{C}\right)\) are random matrices, and a subset of the \(X_{k}\) are independent from and orthogonally in general position to the others, then the cumulant vanishes:
\[K_{\pi}\left(X_{1},\ldots,X_{n}\right)=0\textrm{.}\]
\label{lemma: vanishing cumulants}
\end{lemma}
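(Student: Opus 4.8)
The plan is to deduce the vanishing directly from the multiplicativity of the matrix cumulants over independent orthogonally invariant ensembles in general position, which is the basic property of the matrix cumulants established in \cite{MR2240781, MR2337139, MR2483727} (cf.\ the topological reformulation in \cite{MR3217665, 2015arXiv151101087R}). Write \(\left[n\right]=A\sqcup B\), where the \(X_{k}\) with \(k\in A\) form the distinguished subset and those with \(k\in B\) are the rest; we may assume \(A,B\neq\emptyset\). The input I would use, in a form convenient here, is that for every \(\sigma\in\mathrm{PM}\left(n\right)\) and every \({\cal W}\in{\cal P}\left(n\right)\) with \({\cal W}\succeq\Pi\left(\sigma\right)\),
\[c_{\left({\cal W},\sigma\right)}\left(X_{1},\ldots,X_{n}\right)=c_{\left({\cal W}\wedge\left\{A,B\right\},\sigma\right)}\left(X_{1},\ldots,X_{n}\right)\textrm{,}\]
where the right-hand side is interpreted as \(0\) whenever \({\cal W}\wedge\left\{A,B\right\}\not\succeq\Pi\left(\sigma\right)\) --- equivalently, whenever some cycle pair of \(\sigma\) sitting inside one block of \({\cal W}\) already uses matrices from both \(A\) and \(B\). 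This packages together the two halves of multiplicativity: a face visiting the two ensembles (conjugated by independent Haar orthogonal matrices) forces those Haar matrices to be joined and so contributes nothing, while a face respecting the colouring contributes the same data whether or not the other colour is present.

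With this in hand, the first step is to transfer the identity to the vertex cumulants. Multiplicativity of \(K_{\left({\cal U},\pi\right)}\) over the blocks of \({\cal U}\) together with the defining relation (\ref{formula: matrix-vertex}) gives, for every \({\cal W}\succeq\Pi\left(\pi\right)\), the blockwise refinement
\[c_{\left({\cal W},\pi\right)}\left(X_{1},\ldots,X_{n}\right)=\sum_{\substack{{\cal U}\in{\cal P}\left(n\right)\\\Pi\left(\pi\right)\preceq{\cal U}\preceq{\cal W}}}K_{\left({\cal U},\pi\right)}\left(X_{1},\ldots,X_{n}\right)\textrm{,}\]
whose M\"{o}bius inverse is (\ref{formula: vertex-matrix}). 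Writing this at \({\cal W}\) and at \({\cal W}\wedge\left\{A,B\right\}\), substituting the matrix-cumulant identity above, subtracting, and noting that for \({\cal U}\preceq{\cal W}\) one has \({\cal U}\preceq{\cal W}\wedge\left\{A,B\right\}\) exactly when \({\cal U}\preceq\left\{A,B\right\}\), I get
\[\sum_{\substack{{\cal U}\in{\cal P}\left(n\right)\\\Pi\left(\pi\right)\preceq{\cal U}\preceq{\cal W},\ {\cal U}\not\preceq\left\{A,B\right\}}}K_{\left({\cal U},\pi\right)}\left(X_{1},\ldots,X_{n}\right)=0\qquad\textrm{for every }{\cal W}\succeq\Pi\left(\pi\right)\textrm{.}\]

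The second step is an induction on the partitions \({\cal U}\succeq\Pi\left(\pi\right)\) with \({\cal U}\not\preceq\left\{A,B\right\}\) --- the ``mixed'' partitions, which form a nonempty up-set in \(\left[\Pi\left(\pi\right),1\right]\) containing \(1_{{\cal P}\left(n\right)}\) (since \(A,B\neq\emptyset\)) --- ordered so that finer partitions are handled first. At a minimal mixed \({\cal W}\) the displayed identity has a single surviving term, so \(K_{\left({\cal W},\pi\right)}\left(X_{1},\ldots,X_{n}\right)=0\); in particular, if \(\Pi\left(\pi\right)\) itself is mixed this base case reads \(K_{\left(\Pi\left(\pi\right),\pi\right)}=c_{\left(\Pi\left(\pi\right),\pi\right)}=c_{\left(\Pi\left(\pi\right)\wedge\left\{A,B\right\},\pi\right)}=0\) (the last equality by the convention, since \(\Pi\left(\pi\right)\wedge\left\{A,B\right\}\not\succeq\Pi\left(\pi\right)\)). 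For a general mixed \({\cal W}\), every term of the identity with \({\cal U}\prec{\cal W}\) mixed has already been shown to vanish, hence \(K_{\left({\cal W},\pi\right)}\left(X_{1},\ldots,X_{n}\right)\) vanishes too. Taking \({\cal U}=1_{{\cal P}\left(n\right)}\) then gives \(K_{\pi}=K_{\left(1,\pi\right)}=0\), which is the assertion. (This mirrors the classical vanishing of cumulants over independent groups; alternatively one could group the sum in (\ref{formula: vertex-matrix}) by \({\cal V}\mapsto{\cal V}\wedge\left\{A,B\right\}\) and invoke the standard M\"{o}bius identity \(\sum_{{\cal V}\wedge\left\{A,B\right\}={\cal Z}}\mu_{{\cal P}}\left({\cal V},1\right)=0\) over the fibres.)

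The substance of the argument lies entirely in the multiplicativity input; once that is granted everything else is order-theoretic. The point I would treat most carefully is the bookkeeping for ``mixing'' faces: a cycle pair of \(\pi\) that by itself uses matrices from both \(A\) and \(B\) must be assigned matrix cumulant \(0\), and this has to be made consistent with the convention that \(c_{\left({\cal Z},\pi\right)}\) is \(0\) when \({\cal Z}\not\succeq\Pi\left(\pi\right)\), so that the single family of identities above subsumes both the case \(\Pi\left(\pi\right)\preceq\left\{A,B\right\}\) (no face mixes) and the case \(\Pi\left(\pi\right)\not\preceq\left\{A,B\right\}\) (some face does). Checking this compatibility is where I expect the only real, and still modest, obstacle.
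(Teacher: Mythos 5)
Your argument is correct, and it reaches the conclusion by a different route than the paper, although both rest on the same external input: the Capitaine--Casalis multiplicativity of matrix cumulants, i.e.\ that \(c_{\rho}\) vanishes when some cycle pair of \(\rho\) mixes the two independent families and factors as \(c_{\left.\rho\right|_{A}}c_{\left.\rho\right|_{B}}\) otherwise. The paper converts this product structure into a statement about vertex cumulants via generating functions: it packages the matrix cumulants of sub-collections of cycle pairs into \(f\left(x_{1},\ldots,x_{r}\right)\), identifies \(\log f\) as the generating function of the vertex cumulants (the coefficients \(\mu_{{\cal P}}\left({\cal V},1\right)=\left(-1\right)^{s-1}\left(s-1\right)!\) arising from the Taylor series of the logarithm), and reads off the vanishing of mixed terms from \(\log\left(f_{1}f_{2}\right)=\log f_{1}+\log f_{2}\). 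You instead establish the family of identities \(\sum K_{\left({\cal U},\pi\right)}=0\), summed over mixed \({\cal U}\) with \(\Pi\left(\pi\right)\preceq{\cal U}\preceq{\cal W}\), and kill the mixed vertex cumulants one at a time by induction up the partition lattice; this is the M\"{o}bius-inversion face of the same classical fact that mixed cumulants of independent quantities vanish. Your version is more elementary (no generating functions) and treats explicitly the degenerate case in which a single cycle pair of \(\pi\) already mixes \(A\) and \(B\), via the convention \(c_{\left({\cal Z},\pi\right)}=0\) for \({\cal Z}\not\succeq\Pi\left(\pi\right)\) --- a case the paper's proof handles only implicitly (every \(f_{I}\) containing the mixing cycle vanishes, so \(\log f\) is independent of that variable). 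The two points you flag as delicate both check out: your packaged identity \(c_{\left({\cal W},\sigma\right)}=c_{\left({\cal W}\wedge\left\{A,B\right\},\sigma\right)}\) is a faithful blockwise restatement of the cited result, and the equivalence of \({\cal U}\preceq{\cal W}\wedge\left\{A,B\right\}\) with \({\cal U}\preceq\left\{A,B\right\}\) for \({\cal U}\preceq{\cal W}\) is just the universal property of the meet, so the subtraction step and the induction are sound.
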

\begin{proof}
Fix \(\pi\in\mathit{PM}\left(n\right)\), and let \({\cal U}=\left\{U_{1},\ldots,U_{r}\right\}=\Pi\left(\pi\right)\).  We construct a generating function for the matrix cumulants restricted to collections of cycles of \(\pi\): for \(I=\left\{i_{1},\ldots,i_{s}\right\}\subseteq\left[r\right]\) we let
\[f_{I}:=c_{\left.\pi\right|_{U_{i_{1}}\cup\cdots\cup U_{i_{s}}}}\left(X_{1},\ldots,X_{n}\right)\]
and let
\[f\left(x_{1},\ldots,x_{r}\right)=1+\sum_{\substack{I=\left\{i_{1},\ldots,i_{s}\right\}\subseteq\left[r\right]\\I\neq\emptyset}}f_{I}x_{i_{1}}\cdots x_{i_{s}}\textrm{.}\]
We now show \(g\left(x_{1},\ldots,x_{r}\right):=\log f\left(x_{1},\ldots,x_{r}\right)\) is a generating function for the vertex cumulants: expanding in the Taylor series of \(\log\left(z\right)\) around \(1\):
\[g\left(x_{1},\ldots,x_{r}\right)=\sum_{k=1}^{\infty}\frac{\left(-1\right)^{k-1}}{k}\left(f\left(x_{1},\ldots,x_{r}\right)-1\right)^{k}\]
the coefficient of \(x_{1}\cdots x_{r}\) is the sum over partitions \({\cal V}=\left\{V_{1},\ldots,V_{s}\right\}\in{\cal P}\left(r\right)\) of (letting \({\cal V}^{\prime}:=\left\{\bigcup_{k\in V}U_{k}:V\in{\cal V}\right\}\)):
\[s!\frac{\left(-1\right)^{s-1}}{s}f_{V_{1}}\cdots f_{V_{s}}=\mu_{{\cal P}}\left({\cal V},1\right)c_{\left({\cal V}^{\prime},\pi\right)}\left(X_{1},\ldots,X_{n}\right)\textrm{.}\]
(\({\cal V}\) representing contributions from the \(k=s\) term of the logarithm in which the \(x_{i}\) whose subscripts are in the same block \(V_{i}\) are from the same \(\left(f\left(x_{1},\ldots,x_{r}\right)-1\right)\) factor: as the term from each factor is distinct, there are \(s!\) terms corresponding to partition \({\cal V}\) in this way). 

By Theorem~\ref{theorem: matrix cumulants}, if for some \(I\subseteq\left[n\right]\) the \(X_{i}\) for \(i\in I\) are independent and orthogonally in general position from the \(X_{i}\), \(i\notin I\), then for any \(\pi\in\mathit{PM}\left(n\right)\) such that \(\Pi\left(\pi\right)\) connects \(I\) and \(\left[n\right]\setminus I\),
\[c_{\rho}\left(X_{1},\ldots,X_{n}\right)=0\]
and if \(\Pi\left(\pi\right)\) which does not connect \(I\) and \(\left[n\right]\setminus I\)
\[c_{\pi}\left(X_{1},\ldots,X_{n}\right)=c_{\left.\pi\right|_{I}}\left(X_{1},\ldots,X_{n}\right)c_{\left.\pi\right|_{\left[n\right]\setminus I}}\left(X_{1},\ldots,X_{n}\right)\]
Thus, if \(f_{1}\left(x_{1},\ldots,x_{r}\right)\) is the generating function for the matrix cumulants of the \(X_{i}\) with \(i\in I\) (where the terms with an \(x_{i}\) with \(i\notin I\) vanish) and \(f_{2}\left(x_{1},\ldots,x_{r}\right)\) is the generating function for the matrix cumulants of the \(X_{i}\) with \(i\notin I\) (where, likewise, terms with \(x_{i}\) where \(i\in I\) vanish), then the generating function for the matrix cumulants of all the \(X_{1},\ldots,X_{r}\) is
\[f\left(x_{1},\ldots,x_{r}\right)=f_{1}\left(x_{1},\ldots,x_{r}\right)f_{2}\left(x_{1},\ldots,x_{r}\right)\textrm{.}\]
Then the generating function for the vertex cumulants is
\[\log\left(f_{1}\left(x_{1},\ldots,x_{r}\right)f_{2}\left(x_{1},\ldots,x_{r}\right)\right)=\log\left(f_{1}\left(x_{1},\ldots,x_{r}\right)\right)+\log\left(f_{2}\left(x_{1},\ldots,x_{r}\right)\right)\]
which has vanishing coefficients on any term with at least one \(x_{i}\) from each of \(I\) and \(\left[r\right]\setminus I\).
\end{proof}

\begin{lemma}
Let \(r_{1},\ldots,r_{m}\) be positive integers, \(n:=r_{1}+\cdots+r_{m}\), and let \(\tau:=\tau_{r_{1},\ldots,r_{m}}\).  Then
\begin{multline}
k_{m}\left(\mathrm{tr}\left(X_{1}\cdots X_{r_{1}}\right),\ldots,\mathrm{tr}\left(X_{r_{1}+\cdots+r_{m-1}+1}\cdots X_{n}\right)\right)\\=\sum_{\substack{\left({\cal U},\pi\right)\in\mathit{PPM}\left(n\right)\\\Pi\left(\tau\right)\vee{\cal U}=1}}N^{\chi_{\tau}\left(\pi\right)-2m}K_{\left({\cal U},\pi\right)}\left(X_{1},\ldots,X_{n}\right)\textrm{.}
\label{formula: moment vertex-cumulant}
\end{multline}
\label{lemma: classical cumulant connected}
\end{lemma}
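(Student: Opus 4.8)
The plan is to start from the definition of the classical cumulant $k_m$ as a Möbius sum of classical moments $a_{\mathcal{U}'}$, $\mathcal{U}'\in\mathcal{P}(m)$, rewrite each such moment first in terms of matrix cumulants and then in terms of vertex cumulants, and finally collapse the resulting Möbius sum.

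First I would record the order isomorphism between $\mathcal{P}(m)$ and the interval $[\Pi(\tau),1_{\mathcal{P}(n)}]$ in $\mathcal{P}(n)$: since $\tau=\tau_{r_1,\ldots,r_m}$ has $m$ cycles, a partition $\mathcal{U}'\in\mathcal{P}(m)$ determines $\widehat{\mathcal{U}'}\in\mathcal{P}(n)$ whose blocks are the unions of the cycles of $\tau$ grouped according to $\mathcal{U}'$, and $\mathcal{U}'\mapsto\widehat{\mathcal{U}'}$ is an order isomorphism carrying $1_{\mathcal{P}(m)}$ to $1_{\mathcal{P}(n)}$. Writing $W$ for a block of $\widehat{\mathcal{U}'}$, the classical moment $a_{\mathcal{U}'}$ of the traces $\mathrm{tr}(X_1\cdots X_{r_1}),\ldots,\mathrm{tr}(X_{r_1+\cdots+r_{m-1}+1}\cdots X_n)$ factors as $\prod_{W}\mathbb{E}(\mathrm{tr}_{\left.\tau\right|_W}(X_1,\ldots,X_n))$.

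Next, for each block $W$ I apply the defining relation of the matrix cumulants, realizing $\left.\tau\right|_W\in S(W)$ as $\mathrm{FD}$ of the premap on $W$ obtained by adjoining to each cycle of $\left.\tau\right|_W$ the cycle gotten by reversing its order and negating its entries, so that $\mathbb{E}(\mathrm{tr}_{\left.\tau\right|_W})=\sum_{\rho_W\in\mathrm{PM}(W)}N^{\chi_{\left.\tau\right|_W}(\rho_W)-2\#(\Pi(\left.\tau\right|_W))}c_{\rho_W}$. Multiplying over $W$ and re-indexing the product by $\rho:=\prod_W\rho_W\in\mathrm{PM}(n)$ (so $\Pi(\rho)\preceq\widehat{\mathcal{U}'}$), I need the exponent of $N$ to be additive over $W$, i.e.\ $\chi_\tau(\rho)=\sum_W\chi_{\left.\tau\right|_W}(\left.\rho\right|_W)$ and $\#(\Pi(\tau))=\sum_W\#(\Pi(\left.\tau\right|_W))$. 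This holds because $\tau$ preserves each $W$ and $\rho$ preserves each $W\cup(-W)$, hence so does $\mathrm{Kr}_\tau(\rho)=(-\tau^{-1})\rho^{-1}\tau$, which therefore restricts block-wise; and the product of the $c_{\rho_W}$ is exactly $c_{(\widehat{\mathcal{U}'},\rho)}$. I then expand $c_{(\widehat{\mathcal{U}'},\rho)}=\prod_W c_{\left.\rho\right|_W}$ by applying (\ref{formula: matrix-vertex}) to each factor and using the multiplicativity of $K$, obtaining $a_{\mathcal{U}'}(\text{traces})=\sum N^{\chi_\tau(\rho)-2\#(\Pi(\tau))}K_{(\mathcal{V},\rho)}(X_1,\ldots,X_n)$, the sum running over $\rho\in\mathrm{PM}(n)$ and $\mathcal{V}\in\mathcal{P}(n)$ with $\Pi(\rho)\preceq\mathcal{V}\preceq\widehat{\mathcal{U}'}$.

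The last step substitutes this into $k_m=\sum_{\mathcal{U}'\in\mathcal{P}(m)}\mu_{\mathcal{P}}(\mathcal{U}',1_{\mathcal{P}(m)})\,a_{\mathcal{U}'}(\text{traces})$ and exchanges the order of summation. For fixed $\rho$ and $\mathcal{V}\succeq\Pi(\rho)$, the constraints $\mathcal{V}\preceq\widehat{\mathcal{U}'}$ and $\Pi(\tau)\preceq\widehat{\mathcal{U}'}$ together say $\widehat{\mathcal{U}'}\succeq\mathcal{V}\vee\Pi(\tau)=:\widehat{\mathcal{W}'}$, i.e.\ $\mathcal{U}'\in[\mathcal{W}',1_{\mathcal{P}(m)}]$, so the inner sum is $\sum_{\mathcal{U}'\in[\mathcal{W}',1_{\mathcal{P}(m)}]}\mu_{\mathcal{P}}(\mathcal{U}',1_{\mathcal{P}(m)})$, which by the defining property of the Möbius function equals $1$ if $\mathcal{W}'=1_{\mathcal{P}(m)}$ — equivalently $\mathcal{V}\vee\Pi(\tau)=1_{\mathcal{P}(n)}$ — and $0$ otherwise. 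Renaming $(\mathcal{V},\rho)$ as $(\mathcal{U},\pi)$ and using $\#(\tau)=\#(\Pi(\tau))$ yields (\ref{formula: moment vertex-cumulant}). I expect the only points that genuinely require care to be the block-additivity of the Euler characteristic under the hypothesis $\Pi(\rho)\preceq\widehat{\mathcal{U}'}$ and the bookkeeping of the isomorphism $\mathcal{P}(m)\cong[\Pi(\tau),1_{\mathcal{P}(n)}]$; neither is a serious obstacle.
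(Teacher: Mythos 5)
Your proof is correct and follows essentially the same route as the paper's: expand the classical cumulant as a M\"{o}bius sum of moments $a_{(\mathcal{U},\tau)}$, rewrite each moment as a sum of vertex cumulants $K_{(\mathcal{V},\pi)}$ with $\mathcal{V}\preceq\mathcal{U}$, reverse the order of summation, and invoke the defining property of the M\"{o}bius function. The only difference is one of detail: the paper asserts the expansion of $a_{(\mathcal{U},\tau)}$ in vertex cumulants in a single step, whereas you derive it by passing through the matrix cumulants block by block and explicitly verifying the block-additivity of $\chi_{\tau}$, which is a worthwhile elaboration but not a different argument.
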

\begin{proof}
From the definitions:
\begin{align*}
&k_{\left(1,\tau\right)}\left(X_{1},\ldots,X_{n}\right)\\&=\sum_{{\cal U}\succeq\Pi\left(\tau\right)}\mu_{{\cal P}}\left({\cal U},1\right)a_{\left({\cal U},\tau\right)}\left(X_{1},\ldots,X_{n}\right)\\&=\sum_{{\cal U}\succeq\Pi\left(\tau\right)}\mu_{{\cal P}}\left({\cal U},1\right)\sum_{\substack{\left({\cal V},\pi\right)\in\mathit{PPM}\left(n\right)\\{\cal V}\preceq{\cal U}}}N^{\chi_{\tau}\left(\pi\right)-2m}K_{\left({\cal V},\pi\right)}\left(X_{1},\ldots,X_{n}\right)\textrm{.}
\end{align*}
Reversing the order of summation, we get
\[\sum_{\left({\cal V},\pi\right)\in\mathit{PPM}\left(n\right)}N^{\chi_{\tau}\left(\pi\right)-2m}K_{\left({\cal V},\pi\right)}\left(X_{1},\ldots,X_{n}\right)\sum_{{\cal U}\succeq\Pi\left(\tau\right)\vee{\cal V}}\mu_{{\cal P}}\left({\cal U},1\right)\textrm{.}\]
The result follows from the propertes of the M\"{o}bius function.
\end{proof}

\subsection{Asymptotics}

\label{subsection: asymptotics}

We will consider series of random matrices \(\left\{X_{\lambda}^{\left(N\right)}:\Omega\rightarrow M_{N\times N}\left(\mathbb{C}\right)\right\}_{\lambda\in\Lambda,N\in I}\) where \(I\subseteq\mathbb{N}\), and the asymptotic value of expressions when the dimension of the matrix \(N\rightarrow\infty\).  We will typically suppress the superscript \(N\) in the notation.

\begin{definition}
We say that random matrices \(\left\{X_{\lambda}:\Omega\rightarrow M_{N_{k}\times N_{k}}\left(\mathbb{C}\right)\right\}_{\lambda\in\Lambda}\) (for \(N=N_{1}<N_{2}<\ldots\)) have {\em limit distribution} \(\left\{x_{\lambda}\right\}_{\lambda\in\Lambda}\subseteq A\) (where \(\left(A,\varphi_{1}\right)\) is a noncommutative probability space) if, for any \(\lambda_{1},\ldots,\lambda_{n}\in\Lambda\),
\[\lim_{N\rightarrow\infty}\mathbb{E}\left(\mathrm{tr}\left(X_{\lambda_{1}}\cdots X_{\lambda_{n}}\right)\right)=\phi_{1}\left(x_{\lambda_{1}}\cdots x_{\lambda_{n}}\right)\]
while all higher cumulants of normalized traces of random matrices in the algebra generated by \(X_{\lambda}\) vanish as \(N\rightarrow\infty\).

The matrices also have a {\em real \(m\)th-order limit distribution} if \(\left(A,\varphi_{1},\ldots,\varphi_{m}\right)\) is an \(m\)th-order probability space; if there is an involution on \(\Lambda\), \(\lambda\mapsto-\lambda\), such that \(X_{-\lambda}=X_{\lambda}^{T}\); if for every \(m^{\prime}<m\) they have \(m^{\prime}\)th-order limit distribution \(\left(A,\varphi_{1},\ldots,\varphi_{m^{\prime}}\right)\); if, for any positive integers \(r_{1},\ldots,r_{m}\) and \(X_{\lambda_{1}},\ldots,X_{\lambda_{r_{1}+\cdots+r_{m}}}\) with \(\lambda_{1},\ldots,\lambda_{r_{1}+\cdots+r_{m}}\in\Lambda\):
\begin{multline*}
\lim_{N\rightarrow\infty}N^{2m-2}k_{m}\left(\mathrm{tr}\left(X_{\lambda_{1}}\cdots X_{\lambda_{r_{1}}}\right),\ldots,\mathrm{tr}\left(X_{\lambda_{r_{1}+\cdots+r_{m-1}+1}}\cdots X_{\lambda_{r_{1}+\cdots+r_{m}}}\right)\right)
\\=\varphi_{m}\left(x_{\lambda_{1}}\cdots x_{\lambda_{r_{1}}},\ldots,x_{\lambda_{r_{1}+\cdots+r_{m-1}+1}}\cdots x_{\lambda_{r_{1}+\cdots+r_{m}}}\right)\textrm{;}
\end{multline*}
and if for any \(m^{\prime}>m\) and \(X_{1},\ldots,X_{m^{\prime}}\) in the algebra generated by the \(X_{\lambda}\) all the limits
\[\lim_{N\rightarrow\infty}N^{m+m^{\prime}-2}k_{m^{\prime}}\left(\mathrm{tr}\left(X_{1}\right),\ldots,\mathrm{tr}\left(X_{m^{\prime}}\right)\right)=0\textrm{.}\]
\end{definition}

If random matrices have a higher-order limit distribution, the orders of the vertex cumulants decrease with the number of vertices in the following way:
\begin{proposition}
Let \(m\geq 2\), \(r_{1},\ldots,r_{m}\geq 0\), \(n:=r_{1}+\cdots+r_{m}\), and \(\tau:=\tau_{r_{1},\ldots,r_{m}}\).  If random matrices \(X_{1},\ldots,X_{n}:\Omega\rightarrow M_{N\times N}\left(\mathbb{C}\right)\) have an \(m\)th-order limit distribution \(x_{1},\ldots,x_{n}\), then
\begin{multline}
\lim_{N\rightarrow\infty}N^{2m-2}K_{r_{1},\ldots,r_{m}}\left(X_{1},\ldots,X_{m}\right)\\=\sum_{\substack{\left({\cal U},\pi\right)\in\mathit{PPM}^{\prime}\left(r_{1},\ldots,r_{m}\right)\\{\cal V}\succeq\Pi\left(\mathrm{Kr}_{\tau}\left(\pi\right)\right),{\cal V}\in\Gamma\left(\Pi\left(\tau\right)\vee{\cal U},\Pi\left(\mathrm{Kr}_{\tau}\left(\pi\right)\vee{\cal U}\right)\right)\\\Pi\left(\tau\right)\vee{\cal U}\vee{\cal V}=1}}\gamma_{\Pi\left(\mathrm{Kr}_{\tau}\left(\pi\right)\right),{\cal V}}\alpha_{\left({\cal U},\pi\right)}\left(x_{1},\ldots,x_{n}\right)\textrm{.}
\label{formula: higher free cumulant}
\end{multline}
Thus the limit of the vertex cumulant can be expressed as a polynomial in \(\varphi_{1},\ldots,\varphi_{m}\) applied to elements in the algebra generated by \(x_{1},\ldots,x_{n}\).

For \(m^{\prime}>m\)
\begin{equation}
K_{r_{1},\ldots,r_{m^{\prime}}}\left(X_{1},\ldots,X_{m^{\prime}}\right)={\cal O}\left(N^{2-m-m^{\prime}}\right)\textrm{.}
\label{formula: even higher free cumulants}
\end{equation}
\label{proposition: higher free cumulants}
\end{proposition}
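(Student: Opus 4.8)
The plan is to evaluate $K_{r_1,\ldots,r_m}(X_1,\ldots,X_n)$ through Lemma~\ref{lemma: connected diagrams}. Applying it with $\mathcal{U}=1$ and base equal to (the premap associated to) $\tau:=\tau_{r_1,\ldots,r_m}$, so that $\#(\Pi(\tau))=m$, it expresses $K_{r_1,\ldots,r_m}(X_1,\ldots,X_n)$ as a sum over triples $(\rho,\mathcal{V},\mathcal{W})$ — $\rho\in\mathrm{PM}(n)$, $\mathcal{V}\succeq\Pi(\rho)$, $\mathcal{W}\succeq\Pi(\mathrm{Kr}_\tau(\rho))$, and $\Pi(\tau)\vee\mathcal{V}\vee\mathcal{W}=1$ — of $N^{\chi_\tau(\rho)-2m}\,\mathrm{wg}_{\Pi(\mathrm{Kr}_\tau(\rho)),\mathcal{W}}\,k_{(\mathcal{V},\rho)}(X_1,\ldots,X_n)$, where $k_{(\mathcal{V},\rho)}=\prod_{V\in\mathcal{V}}k_{\#(\Pi(\rho)|_V)}(\mathrm{tr}(\cdots),\ldots,\mathrm{tr}(\cdots))$. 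Into each summand I would substitute the Weingarten asymptotics $\mathrm{wg}_{\mathcal{P},\mathcal{W}}=\gamma_{\mathcal{P},\mathcal{W}}N^{-2(\#(\mathcal{P})-\#(\mathcal{W}))}+O(N^{1-\#(\mathcal{P})})$ and, using the $m$th-order limit distribution hypothesis, the expansions $k_j(\mathrm{tr}(\cdots),\ldots)=N^{2-2j}(\varphi_j(\cdots)+o(1))$ for $j\le m$ and $k_j(\mathrm{tr}(\cdots),\ldots)=o(N^{2-m-j})$ for $j>m$. After this, each summand equals $N^{E(\rho,\mathcal{V},\mathcal{W})}$ times an explicit coefficient plus lower order, where $E$ is an affine function of $\#(\Pi(\rho))$, $\#(\Pi(\mathrm{Kr}_\tau(\rho)))$, $\#(\mathcal{W})$ and the block sizes $\#(\Pi(\rho)|_V)$, so the statement reduces to a count of powers of $N$.

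The heart of the proof is to show $E(\rho,\mathcal{V},\mathcal{W})\le 2-2m$ for every admissible triple, with the coefficient of $N^{2-2m}$ vanishing unless: (i) $\rho$ is noncrossing on $\tau$, i.e.\ $\chi_\tau(\rho)=2\#(\Pi(\tau))$; (ii) $\mathcal{W}$ performs only the coarsening of $\Pi(\mathrm{Kr}_\tau(\rho))$ forced by the connectivity constraint, so that the leading $\gamma_{\Pi(\mathrm{Kr}_\tau(\rho)),\mathcal{W}}$ term of the Weingarten cumulant survives — a geodesic condition in the sense of Definition~\ref{definition: partition geodesic}; (iii) $\mathcal{V}$ is likewise geodesic relative to $\Pi(\tau)$ and $\Pi(\rho)$; and (iv) no block of $\mathcal{V}$ meets more than $m$ cycles of $\rho$, so that each surviving factor of $k_{(\mathcal{V},\rho)}$ converges, after rescaling, to a $\varphi_j$ with $j\le m$. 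The inequality I would obtain by writing $\chi_\tau(\rho)=\#(\Pi(\tau))+\#(\Pi(\rho))+\#(\Pi(\mathrm{Kr}_\tau(\rho)))-n$, collecting it with the Weingarten exponent $-2(\#(\Pi(\mathrm{Kr}_\tau(\rho)))-\#(\mathcal{W}))$ and the cumulant exponents $\sum_V(2-2\#(\Pi(\rho)|_V))$, and then combining the three available Euler-characteristic estimates — the premap noncrossing inequality $\chi_\tau(\rho)\le 2\#(\Pi(\tau))$, the subadditivity $\#(\mathcal{P}\vee\mathcal{Q})\ge\#(\mathcal{P})+\#(\mathcal{Q})-n$, and the geodesic defect bound of Definition~\ref{definition: partition geodesic} — with the connectivity constraint: since $\Pi(\tau)$ has $m$ blocks and $\Pi(\tau)\vee\mathcal{V}\vee\mathcal{W}=1$, the partitions $\mathcal{V}$ and $\mathcal{W}$ must jointly merge blocks at least $m-1$ times, and each forced merge costs a factor $N^{-2}$, producing the $N^{2-2m}$. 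Arranging these several sources of loss so that they telescope precisely, rather than add slackly — a genus count of the same kind that underlies the first-order matrix-cumulant/free-cumulant correspondence and its second-order refinement (for $m=2$, via Lemma~\ref{lemma: annular premaps}) — is the step I expect to be the main obstacle. One must also note separately that a block of $\mathcal{V}$ meeting more than $m$ cycles of $\rho$ makes its summand $o(N^{2-2m})$, so that no $\varphi_j$ with $j>m$ is needed.

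It then remains to identify the equality configurations with the index set of (\ref{formula: higher free cumulant}) and read off the coefficient. Condition (i) identifies $\rho$ with a noncrossing premap $\pi\in\mathrm{PM}_{\mathrm{nc}}(\tau_{r_1,\ldots,r_m})$ (after the inversion that relates $k_{(\mathcal{V},\rho)}$ to the $\mathrm{tr}_{\pi^{-1}}$ appearing in the definition of the matrix cumulants), (iii) identifies $\mathcal{V}$ with the partition written $\mathcal{U}$ in the statement, and (ii) identifies $\mathcal{W}$ with the one written $\mathcal{V}$ there, the constraints becoming exactly those displayed in (\ref{formula: higher free cumulant}); the surviving coefficient is $\gamma_{\Pi(\mathrm{Kr}_\tau(\pi)),\mathcal{V}}$ times $\lim_N N^{2\#(\mathcal{U})-2\#(\Pi(\pi))}k_{(\mathcal{U},\pi)}=\alpha_{(\mathcal{U},\pi^{-1})}(x_1,\ldots,x_n)$, which gives (\ref{formula: higher free cumulant}). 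Since $\alpha_{(\mathcal{U},\pi^{-1})}$ is by construction a product of the $\varphi_j$, $j\le m$, applied to words in $x_1,\ldots,x_n$, the displayed limit is visibly a polynomial in $\varphi_1,\ldots,\varphi_m$, which is the last assertion of the first part.

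Finally, for (\ref{formula: even higher free cumulants}) I would run the identical reduction with $\tau=\tau_{r_1,\ldots,r_{m'}}$ and carry out the same count; the only change is that joining the $m'$ blocks of $\Pi(\tau)$ now demands at least $m'-1$ merges, and any merge that pushes a block of $\mathcal{V}$ past $m$ cycles of $\rho$ is governed only by the weaker estimate $k_j(\cdots)=o(N^{2-m-j})$ ($j>m$) supplied by the $m$th-order hypothesis. Tracking the resulting exponents shows that every summand is $O(N^{2-m-m'})$, while the configurations that would otherwise attain that order are only $o$ of it, so no universal leading coefficient — and hence no limiting formula — can be extracted, exactly as stated.
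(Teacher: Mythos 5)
Your proposal follows essentially the same route as the paper: expand via Lemma~\ref{lemma: connected diagrams}, insert the Weingarten cumulant asymptotics and the $m$th-order scaling of the classical cumulants of traces, bound the total exponent by $2-2m$ so that only noncrossing $\rho$ with geodesic $\mathcal{V}$ and $\mathcal{W}$ survive, and then read off the coefficient $\gamma_{\Pi(\mathrm{Kr}_\tau(\pi)),\mathcal{V}}\,\alpha_{({\cal U},\pi^{-1})}$. The ``telescoping'' step you flag as the main obstacle is handled in the paper exactly by the blockwise form of the geodesic inequality you cite (applied on each block $I\in\Pi(\pi)\vee\mathcal{V}$, combined with the per-cumulant order $O(N^{2-s-\min(s,m)})$), so your outline is sound and matches the paper's argument.
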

\begin{proof}
We first consider (\ref{formula: higher free cumulant}).  Consider a term associated to \(\rho\in\mathrm{PM}\left(n\right)\) in (\ref{formula: connected diagrams}) (we will switch to the notation of (\ref{formula: connected diagrams}) throughout this proof).

The exponent on \(N\) is
\[\chi_{\pi}\left(\rho\right)-2m\leq 2\#\left(\Pi\left(\pi\right)\vee\Pi\left(\rho\right)\right)-2m\textrm{.}\]

An \(s\)th classical cumulant of traces is \({\cal O}\left(N^{2-s-\min\left(s.m\right)}\right)\leq{\cal O}\left(N^{2\left(1-\min\left(s,m\right)\right)}\right)\); the exponent is never positive.  On a block \(I\in\Pi\left(\pi\right)\vee{\cal V}\), if one of the cumulants with indices in \(I\) has \(s\geq m=\#\left(\Pi\left(\pi\right)\right)\), then this cumulant and hence the product of all cumulants in \(I\) is \({\cal O}\left(N^{2\left(1-\#\left(\left.\Pi\left(\pi\right)\right|_{I}\vee\left.\Pi\left(\rho\right)\right|_{I}\right)\right)}\right)\).  If all cumulants with indices in \(I\) have \(s\leq m\), then by the comment after Definition~\ref{definition: partition geodesic}, we have \(\#\left(\left.{\cal V}\right|_{I}\right)-\#\left(\left.\Pi\left(\rho\right)\right|_{I}\right)\leq\#\left(\left.\Pi\left(\pi\right)\right|_{I}\vee\left.{\cal V}\right|_{I}\right)-\#\left(\left.\Pi\left(\pi\right)\right|_{I}\vee\left.\Pi\left(\rho\right)\right|_{I}\right)\), so again the product of cumulants is \({\cal O}\left(N^{2\left(1-\#\left(\left.\Pi\left(\pi\right)\right|_{I}\vee\left.\Pi\left(\rho\right)\right|_{I}\right)\right)}\right)\).  Multiplying over the blocks \(I\), we get
\[k_{\left({\cal V},\rho\right)}\left(X_{1},\ldots,X_{n}\right)={\cal O}\left(N^{-2\left(\#\left(\Pi\left(\pi\right)\vee\Pi\left(\rho\right)\right)-\#\left(\Pi\left(\pi\right)\vee{\cal V}\right)\right)}\right)\textrm{.}\]

Finally,
\begin{multline*}
\mathrm{wg}_{\Pi\left(\mathrm{Kr}_{\pi}\left(\rho\right)\right),{\cal W}}={\cal O}\left(N^{-2\#\left(\Pi\left(\mathrm{Kr}_{\pi}\left(\rho\right)\right)-\#\left({\cal W}\right)\right)}\right)\\={\cal O}\left(N^{-2\left(\#\left(\Pi\left(\pi\right)\vee{\cal V}\right)-\#\left(\Pi\left(\pi\right)\vee{\cal V}\vee{\cal W}\right)\right)}\right)\textrm{.}
\end{multline*}

Multiplying the three displayed formulas, the limit (\ref{formula: higher free cumulant}) exists.  Surviving terms are those where each factor is of maximal order.  Any classical cumulant on more than \(m\) terms must correspond to a block of \({\cal V}\) which contains at least two blocks \(I_{1},I_{2}\in\Pi\left(\rho\right)\) containing elements from the same block of \(\Pi\left(\pi\right)\vee\Pi\left(\rho\right)\).  If we construct \({\cal V}^{\prime}\) from \({\cal V}\) by removing the elements of \(I_{1}\) from their block and adding block \(I_{1}\), we find we have another term appearing in (\ref{formula: connected diagrams}) (since \(\Pi\left(\pi\right)\vee{\cal V}^{\prime}=\Pi\left(\pi\right)\vee{\cal V}\)) which is higher-order in \(N\), so no surviving term may contain a classical cumulant on more than \(m\) elements.  The first two conditions under the summation in (\ref{formula: higher free cumulant}) follow from the maximality.  Furthermore, since there are no classical cumulants on more than \(m\) terms, (\ref{formula: higher free cumulant}) is a polynomial in \(\phi_{1},\ldots,\phi_{m}\) as desired.

We now consider (\ref{formula: even higher free cumulants}).  Again, let \(I\in\Pi\left(\pi\right)\vee{\cal V}\) and let the blocks of \({\cal V}\) in \(I\) contain \(s_{1},\ldots,s_{t}\) blocks of \(\Pi\left(\rho\right)\) respectively.  By the comment after Definition~\ref{definition: partition geodesic},
\[\sum_{i=1}^{t}\left(1-\min\left(s_{i},m\right)\right)\leq 1-\min\left(\#\left(\left.\Pi\left(\pi\right)\vee\Pi\left(\rho\right)\right|_{I}\right),m\right)\]
(considering the case where each summand on the left-hand side is equal to \(1-s_{i}\) and the case where at least one summand is equal to \(1-m\)).  Summing over the blocks \(I\in\Pi\left(\pi\right)\vee{\cal V}\), we have
\[\sum_{V\in{\cal V}}\left(1-\min\left(\#\left(\left.\rho\right|_{V}\right),m\right)\right)\leq\#\left(\Pi\left(\pi\right)\vee{\cal V}\right)-\min\left(\#\left(\Pi\left(\pi\right)\vee\Pi\left(\rho\right)\right),m\right)\textrm{.}\]
The product of classical cumulants is then
\begin{multline*}
k_{\left({\cal V},\rho\right)}\left(X_{1},\ldots,X_{n}\right)={\cal O}\left(N^{2\#\left({\cal V}\right)-\#\left(\rho\right)-\sum_{V\in{\cal V}}\min\left(\#\left(\left.\rho\right|_{V}\right),m\right)}\right)\\={\cal O}\left(N^{-\left(2\#\left(\Pi\left(\pi\right)\vee{\cal V}\right)-\#\left(\Pi\left(\pi\right)\vee\Pi\left(\rho\right)\right)-\min\left(\#\left(\Pi\left(\pi\right)\vee\Pi\left(\rho\right)\right),m\right)\right)}\right)\textrm{.}
\end{multline*}
By the arguments above, if the \(\min\) function takes the value \(\#\left(\Pi\left(\pi\right)\vee\Pi\left(\rho\right)\right)\) the result follows, and if it take the value \(m\), then
\[K_{r_{1},\ldots,r_{m^{\prime}}}\left(X_{1},\ldots,X_{n}\right)={\cal O}\left(N^{2-2m+\left(m-\#\left(\Pi\left(\pi\right)\vee\Pi\left(\rho\right)\right)\right)}\right)\textrm{.}\]
Since \(\#\left(\Pi\left(\pi\right)\vee\Pi\left(\rho\right)\right)\leq\#\left(\pi\right)=m^{\prime}\), the result follows.
\end{proof}

We can define \(m\)th order free cumulants on \(m\)th order probability spaces by (\ref{formula: higher free cumulant}).  (We will see in Proposition~\ref{proposition: two-vertex limit} that this is consistent with the defintion of \(\kappa_{p,q}\).)  Since the higher-order free cumulants are the limits of vertex cumulants, they vanish when applied to the limits of matrices which are independent and orthogonally in general position.
\begin{definition}
For \(r_{1},\ldots,r_{m}\geq 1\), \(\tau:=\tau_{r_{1},\ldots,r_{m}}\), and \(n:=r_{1}+\cdots+r_{m}\), we define a function \(\kappa_{r_{1},\ldots,r_{m}}:A^{n}\rightarrow\mathbb{C}\)
\begin{multline*}
\kappa_{r_{1},\ldots,r_{m}}\left(x_{1},\ldots,x_{n}\right)\\:=\sum_{\substack{\left({\cal U},\pi\right)\in\mathit{PPM}^{\prime}\left(\tau\right)\\{\cal V}\succeq\Pi\left(\mathrm{Kr}_{\tau}\left(\pi\right)\right),{\cal V}\in\Gamma\left(\Pi\left(\tau\right)\vee{\cal U},\Pi\left(\mathrm{Kr}_{\tau}\left(\pi\right)\right)\right)\\\Pi\left(\tau\right)\vee{\cal U}\vee{\cal V}=1}}\gamma_{\Pi\left(\mathrm{Kr}_{\tau}\left(\pi\right)\right),{\cal V}}\alpha_{\left({\cal U},\pi\right)}\left(x_{1},\ldots,x_{n}\right)\textrm{.}
\end{multline*}
We extend the definition to \(\kappa_{\pi}\) for \(\pi\in\mathit{PM}\left(\tau\right)\) and \(\kappa_{\left({\cal U},\pi\right)}\) for \(\left({\cal U},\pi\right)\in\mathit{PPM}^{\prime}\) as in Definition~\ref{definition: premaps}.
\label{definition: higher-order cumulants}
\end{definition}

The higher-order moments may be expressed as sums of the higher-order cumulants:
\begin{proposition}
Let \(X_{1},\ldots,X_{n}:\Omega\rightarrow M_{N\times N}\left(\mathbb{C}\right)\) be random matrices with an \(m\)th order limit distribution \(x_{1},\ldots,x_{n}\).  Then for \(r_{1}+\cdots+r_{m}=n\), 
\[\alpha_{r_{1},\ldots,r_{m}}\left(x_{1},\ldots,x_{n}\right)=\sum_{\left({\cal U},\pi\right)\in\mathit{PPM}^{\prime}\left(n\right)}\kappa_{\left({\cal U},\pi\right)}\left(x_{1},\ldots,x_{n}\right)\textrm{.}\]
\label{proposition: limit cumulant}
\end{proposition}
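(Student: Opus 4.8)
The plan is to realise $\alpha_{r_{1},\ldots,r_{m}}$ as the limit of a rescaled classical cumulant of traces and then expand that cumulant through the vertex cumulants, tracking powers of $N$ exactly as in the proof of Proposition~\ref{proposition: higher free cumulants}.  Write $\tau:=\tau_{r_{1},\ldots,r_{m}}$, so that $\#\left(\tau\right)=m$.  By the definition of an $m$th order limit distribution,
\begin{multline*}
\alpha_{r_{1},\ldots,r_{m}}\left(x_{1},\ldots,x_{n}\right)=\varphi_{m}\left(x_{1}\cdots x_{r_{1}},\ldots,x_{r_{1}+\cdots+r_{m-1}+1}\cdots x_{n}\right)\\
=\lim_{N\rightarrow\infty}N^{2m-2}\,k_{m}\left(\mathrm{tr}\left(X_{1}\cdots X_{r_{1}}\right),\ldots,\mathrm{tr}\left(X_{r_{1}+\cdots+r_{m-1}+1}\cdots X_{n}\right)\right)\textrm{.}
\end{multline*}
Lemma~\ref{lemma: classical cumulant connected} expands the classical cumulant as a sum of $N^{\chi_{\tau}\left(\pi\right)-2m}K_{\left({\cal U},\pi\right)}\left(X_{1},\ldots,X_{n}\right)$ over $\left({\cal U},\pi\right)\in{\cal PPM}\left(n\right)$ with $\Pi\left(\tau\right)\vee{\cal U}=1$, and multiplying by $N^{2m-2}$ turns each summand into $N^{\chi_{\tau}\left(\pi\right)-2}K_{\left({\cal U},\pi\right)}\left(X_{1},\ldots,X_{n}\right)$.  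It therefore suffices to prove that, as $N\rightarrow\infty$, every summand with $\left({\cal U},\pi\right)\notin{\cal PPM}^{\prime}\left(n\right)$ tends to $0$, while every summand with $\left({\cal U},\pi\right)\in{\cal PPM}^{\prime}\left(n\right)$ tends to $\kappa_{\left({\cal U},\pi\right)}\left(x_{1},\ldots,x_{n}\right)$.

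To handle an individual summand I would expand $K_{\left({\cal U},\pi\right)}$ further by Lemma~\ref{lemma: connected diagrams}, so that $N^{2m-2}k_{m}\left(\ldots\right)$ becomes a sum over tuples $\left({\cal U},\pi,\rho,{\cal V},{\cal W}\right)$ --- with $\Pi\left(\tau\right)\vee{\cal U}=1$, $\rho\in\mathrm{PM}\left(n\right)$, ${\cal V}\succeq\Pi\left(\rho\right)$, ${\cal W}\succeq\Pi\left(\mathrm{Kr}_{\pi}\left(\rho\right)\right)$ and $\Pi\left(\pi\right)\vee{\cal V}\vee{\cal W}={\cal U}$ --- of a power of $N$ times $\mathrm{wg}_{\Pi\left(\mathrm{Kr}_{\pi}\left(\rho\right)\right),{\cal W}}$ times the product of classical cumulants of normalised traces $k_{\left({\cal V},\rho\right)}$.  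The orders of the last two factors are then bounded exactly as in the proof of Proposition~\ref{proposition: higher free cumulants}: using that an $m$th order limit distribution is in particular an $m^{\prime}$th order one for every $m^{\prime}\le m$, together with the inequality recorded after Definition~\ref{definition: partition geodesic}, one obtains $k_{\left({\cal V},\rho\right)}=O\left(N^{-2\left(\#\left(\Pi\left(\pi\right)\vee\Pi\left(\rho\right)\right)-\#\left(\Pi\left(\pi\right)\vee{\cal V}\right)\right)}\right)$ (of strictly smaller order if a cumulant of order exceeding $m$ enters) and $\mathrm{wg}_{\Pi\left(\mathrm{Kr}_{\pi}\left(\rho\right)\right),{\cal W}}=O\left(N^{-2\left(\#\left(\Pi\left(\pi\right)\vee{\cal V}\right)-\#\left({\cal U}\right)\right)}\right)$.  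Combining these with the premap Euler inequalities $\chi_{\tau}\left(\pi\right)\le 2\#\left(\Pi\left(\tau\right)\vee\Pi\left(\pi\right)\right)$ and $\chi_{\pi}\left(\rho\right)\le 2\#\left(\Pi\left(\pi\right)\vee\Pi\left(\rho\right)\right)$ (both from \cite{MR3217665}), the standard submodularity inequality $\#\left(\Pi\left(\tau\right)\vee\Pi\left(\pi\right)\right)-\#\left(\Pi\left(\tau\right)\vee{\cal U}\right)\le\#\left(\Pi\left(\pi\right)\right)-\#\left({\cal U}\right)$, and the constraint $\Pi\left(\tau\right)\vee{\cal U}=1$, one finds that the total exponent of $N$ in each summand is $\le 0$.

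Equality in this chain of inequalities pins down the surviving terms.  Necessarily $\chi_{\tau}\left(\pi\right)$ and $\chi_{\pi}\left(\rho\right)$ are maximal (so $\pi\in\mathrm{PM}_{\mathrm{nc}}\left(\tau\right)$ and $\rho\in\mathrm{PM}_{\mathrm{nc}}\left(\pi\right)$), ${\cal U}$ is geodesic for $\left(\Pi\left(\tau\right),\Pi\left(\pi\right)\right)$, and ${\cal V},{\cal W}$ are geodesic with every block of ${\cal V}$ carrying at most $m$ cycles of $\rho$.  The first two conditions say precisely $\left({\cal U},\pi\right)\in{\cal PPM}^{\prime}\left(n\right)$, and the conditions on $\left(\rho,{\cal V},{\cal W}\right)$ are exactly the index set of the sum defining $\kappa_{r_{1},\ldots,r_{m}}$, applied blockwise over the blocks of ${\cal U}$ (with $\left.\pi\right|_{U}$ in the role of the base).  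On a surviving term the explicit power of $N$ is $N^{0}$, $\mathrm{wg}_{\Pi\left(\mathrm{Kr}_{\pi}\left(\rho\right)\right),{\cal W}}\to\gamma_{\Pi\left(\mathrm{Kr}_{\pi}\left(\rho\right)\right),{\cal W}}$, and $k_{\left({\cal V},\rho\right)}\to\alpha_{\left({\cal V},\rho^{-1}\right)}\left(x_{1},\ldots,x_{n}\right)$ (only moments of order $\le m$ occur, so the limit exists and is a polynomial in $\varphi_{1},\ldots,\varphi_{m}$).  Fixing $\left({\cal U},\pi\right)\in{\cal PPM}^{\prime}\left(n\right)$ and summing the surviving inner contributions reproduces, block by block over ${\cal U}$, the definition $\kappa_{\left({\cal U},\pi\right)}=\prod_{U\in{\cal U}}\kappa_{\left.\pi\right|_{U}}$; summing over $\left({\cal U},\pi\right)\in{\cal PPM}^{\prime}\left(n\right)$ then gives the claimed identity.

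I expect the main obstacle to be exactly this power-of-$N$ accounting --- checking that the combined exponent coming from $N^{\chi_{\tau}\left(\pi\right)-2}$, the Weingarten cumulant, and the product of classical cumulants of traces is nonpositive for every tuple $\left({\cal U},\pi,\rho,{\cal V},{\cal W}\right)$ subject to $\Pi\left(\tau\right)\vee{\cal U}=1$, and that its vanishing locus is exactly ${\cal PPM}^{\prime}\left(n\right)$ together with the geodesic--noncrossing data underlying $\kappa$.  This is the premap analogue of the estimates carried out in the proof of Proposition~\ref{proposition: higher free cumulants}, the only genuinely new ingredient being the presence of the extra partition ${\cal U}$, which is controlled by the submodularity inequality above and the hypothesis $\Pi\left(\tau\right)\vee{\cal U}=1$.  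With those inequalities in place, the passage to the limit, the multilinearity of both sides, and the bookkeeping around the $\pi\leftrightarrow\pi^{-1}$ occurring in the definition of $\kappa_{\left({\cal U},\pi\right)}$ are routine.
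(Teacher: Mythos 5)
Your proposal is correct and follows essentially the same route as the paper: expand the classical cumulant of traces via Lemma~\ref{lemma: classical cumulant connected}, bound the exponent of \(N\) using the Euler-characteristic inequality together with the geodesic inequality noted after Definition~\ref{definition: partition geodesic}, and identify the surviving terms as exactly those indexed by \({\cal PPM}^{\prime}\left(n\right)\) with limits \(\kappa_{\left({\cal U},\pi\right)}\). The only difference is that where the paper simply cites Proposition~\ref{proposition: higher free cumulants} for the order \(O\left(N^{-2\left(\#\left(\Pi\left(\pi\right)\right)-\#\left({\cal U}\right)\right)}\right)\) of the product of vertex cumulants and for their limits, you re-derive that estimate by unpacking the vertex cumulants through Lemma~\ref{lemma: connected diagrams}; this is harmless but redundant.
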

\begin{proof}
Let \(\tau:=\tau_{r_{1},\ldots,r_{m}}\), so \(\alpha_{r_{1},\ldots,r_{m}}\left(x_{1},\ldots,x_{n}\right)=\lim_{N\rightarrow\infty}k_{\tau}\left(X_{1},\ldots,X_{n}\right)\).
We expand each cumulant as in Lemma~\ref{lemma: classical cumulant connected} and take the \(N\rightarrow\infty\) limit.  The exponent on \(N\) is at most \(2\#\left(\Pi\left(\tau\right)\vee\Pi\left(\pi\right)\right)-2m\).  By Proposition~\ref{proposition: higher free cumulants}, the product of vertex cumulants is \({\cal O}\left(N^{-2\left(\#\left(\Pi\left(\pi\right)\right)-\#\left({\cal V}\right)\right)}\right)\), and by the comment after Definition~\ref{definition: partition geodesic}, \(-2\left(\#\left(\Pi\left(\pi\right)\right)-\#\left({\cal V}\right)\right)\geq -2\left(\#\left(\Pi\left(\tau\right)\vee\Pi\left(\pi\right)\right)-\#\left(\Pi\left(\tau\right)\vee{\cal V}\right)\right)\).  Thus the limit of each summand exists, and the surviving terms are those where \(\chi_{\tau}\left(\pi\right)=2\#\left(\Pi\left(\tau\right)\vee\Pi\left(\pi\right)\right)\) (so \(\pi\in\mathit{PM}_{\mathrm{nc}}\left(\tau\right)\)) and \(\#\left(\Pi\left(\pi\right)\right)-\#\left({\cal V}\right)=\#\left(\Pi\left(\tau\right)\vee\Pi\left(\pi\right)\right)-\#\left(\Pi\left(\tau\right)\vee{\cal V}\right)\) (so \({\cal U}\in\Gamma\left(\Pi\left(\pi\right),\Pi\left(\tau\right)\vee\Pi\left(\pi\right)\right)\)).  By Definition~\ref{definition: higher-order cumulants}, the limit of the surviving terms is as given.
\end{proof}

\subsection{Second-order freeness}

\label{subsection: second-order}

The limits of two-vertex cumulants \(K_{p,q}\) are second-order free cumulants \(\kappa_{p,q}\).  The definition of \(\kappa_{p,q}\) given in Section~\ref{section: freeness} thus coincides with that given in Section~\ref{section: matrix}.

\begin{proposition}
If random matrices \(X_{1},\ldots,X_{p+q}:\Omega\rightarrow\mathbb{C}\) have a second-order limit distribution \(x_{1},\ldots,x_{p+q}\), then
\begin{multline*}
\lim_{N\rightarrow\infty}N^{2}K_{p,q}\left(X_{1},\ldots,X_{p};X_{p+1},\ldots,X_{p+q}\right)\\=\kappa_{p,q}\left(x_{1},\ldots,x_{p};x_{p+1},\ldots,x_{p+q}\right).
\end{multline*}
\label{proposition: two-vertex limit}
\end{proposition}
\begin{proof}
Let \(\tau:=\tau_{p,q}\).  A summand in (\ref{formula: higher free cumulant}) corresponds to a \(\pi\in\mathrm{PM}_{\mathrm{nc}}\left(\tau\right)\), which corresponds to a \(\pi^{\prime}\) according to Lemma~\ref{lemma: annular premaps}.

If \(\Pi\left(\pi^{\prime}\right)\) does not connect \(\left[p\right]\) and \(\left[p+1,p+q\right]\), then either \({\cal U}=\Pi\left(\pi^{\prime}\right)\) (in which case \({\cal V}\) connects two blocks of \(\Pi\left(\mathrm{Kr}_{\tau}\left(\pi^{\prime}\right)\right)\)), or \({\cal U}\) joins two blocks of \(\Pi\left(\pi^{\prime}\right)\) (in which case \({\cal V}=\Pi\left(\mathrm{Kr}_{\tau}\left(\pi^{\prime}\right)\right)\)).  The terms corresponding to the former are those in the first sum of (\ref{formula: cumulant moment}), if we sum over all such \({\cal V}\).  The terms corresponding to the latter are those in the fourth sum in (\ref{formula: cumulant moment}).

If \(\Pi\left(\pi\right)\) connects \(\left[p\right]\) and \(\left[p+1,p+q\right]\), then \({\cal U}=\Pi\left(\pi\right)\) and \({\cal V}=\Pi\left(\mathrm{Kr}_{\tau}\left(\pi\right)\right)\).  Such terms correspond to those in the second and third sums of (\ref{formula: cumulant moment}).
\end{proof}

We show an example of the types of terms that show up in asymptotic expansion of a two-vertex cumulant.  The limits of these terms appear as terms in the expression for the second-order cumulant.
\begin{example}
We consider the two-vertex cumulant
\[K_{3,5}\left(X_{1},X_{2},X_{3},X_{4},X_{5},X_{6},X_{7},X_{8}\right)\]
and the terms in its expansion corresponding to the four diagrams in Figure~\ref{figure: cumulant moment}.

The first diagram corresponds to
\[\pi=\left(1,3\right)\left(-3,-1\right)\left(2\right)\left(-2\right)\left(4,7\right)\left(-7,-4\right)\left(5\right)\left(-5\right)\left(6\right)\left(-6\right)\left(8\right)\left(-8\right)\]
which has \(\Pi\left(\mathrm{Kr}_{3,5}\left(\pi\right)\right)=\left\{\left\{1,2\right\},\left\{3\right\},\left\{4,5,6\right\},\left\{7,8\right\}\right\}\).  This diagram is not connected, so at least one of \({\cal U}\) and \({\cal V}\) must connect the cycles of \(\tau_{3,5}\).  We consider the term where \({\cal U}=\Pi\left(\pi\right)=\left\{\left\{1,3\right\},\left\{2\right\},\left\{4,7\right\},\left\{5\right\}\left\{6\right\},\left\{8\right\}\right\}\) and the unique nontrivial block of \({\cal V}\) contains the two cycles of the complement traced with dotted lines:
\[{\cal V}=\left\{\left\{1,2\right\},\left\{3,4,5,6\right\},\left\{7,8\right\}\right\}\textrm{.}\]
The contribution of the Weingarten function is
\[\mathrm{wg}_{\Pi\left(\mathrm{Kr}_{\pi}\left(\rho\right)\right),{\cal V}}=\mathrm{wg}\left(\left[2\right]\right)\left[\mathrm{wg}\left(\left[3,1\right]\right)-\mathrm{wg}\left(\left[3\right]\right)\cdot\mathrm{wg}\left(\left[1\right]\right)\right]\mathrm{wg}\left(\left[2\right]\right)\textrm{.}\]
Since \(\gamma_{\Pi\left(\mathrm{Kr}_{\pi}\left(\rho\right)\right),{\cal V}}=\left(-1\right)\cdot 30\cdot\left(-1\right)=30\), this quantity is \(30N^{-2}+{\cal O}\left(N^{-3}\right)\).
The contribution of the moments is
\[\mathbb{E}\left(\mathrm{tr}\left(X_{1}X_{3}\right)\mathrm{tr}\left(X_{2}\right)\mathrm{tr}\left(X_{4}X_{7}\right)\mathrm{tr}\left(X_{5}\right)\mathrm{tr}\left(X_{6}\right)\mathrm{tr}\left(X_{8}\right)\right)\]
which has \(N\rightarrow\infty\) limit
\[\alpha_{2}\left(x_{1},x_{3}\right)\alpha_{1}\left(x_{2}\right)\alpha_{2}\left(x_{4},x_{7}\right)\alpha_{1}\left(x_{5}\right)\alpha_{1}\left(x_{6}\right)\alpha_{1}\left(x_{8}\right)=\alpha_{\pi^{\prime}}\left(x_{1},\ldots,x_{8}\right)\textrm{.}\]
Because \(\chi_{\tau}\left(\pi\right)=4\), the limit (multiplied by \(N^{2}\)) survives, and in the limit its contribution is \(30\alpha_{\pi^{\prime}}\left(x_{1},\ldots,x_{8}\right)\).

The second diagram corresponds to
\[\pi=\left(1,4,6,3\right)\left(-3,-6,-4,-1\right)\left(2,7\right)\left(-7,-2\right)\left(5\right)\left(-5\right)\left(8\right)\left(-8\right)\]
with \(\chi_{3,5}\left(\pi\right)=2\).  An asymptotically nonvanishing term must have \({\cal U}=\Pi\left(\pi\right)\) and \({\cal V}=\Pi\left(\mathrm{Kr}_{\tau_{3,5}}\left(\pi\right)\right)\).  The limit of the cumulant of traces is
\begin{multline*}
\lim_{N\rightarrow\infty}k_{\left({\cal U},\pi\right)}\left(X_{1},\ldots,X_{8}\right)\\=\lim_{N\rightarrow\infty}\mathbb{E}\left(\mathrm{tr}\left(X_{1}X_{4}X_{6}X_{3}\right)\right)\mathbb{E}\left(\mathrm{tr}\left(X_{2}X_{7}\right)\right)\mathbb{E}\left(\mathrm{tr}\left(X_{5}\right)\right)\mathbb{E}\left(\mathrm{tr}\left(X_{8}\right)\right)\\=\alpha_{4}\left(x_{1},x_{4},x_{6},x_{3}\right)\alpha_{2}\left(x_{2},x_{7}\right)\alpha_{1}\left(x_{5}\right)\alpha_{1}\left(x_{8}\right)=\alpha_{\pi^{\prime}}\left(x_{1},\ldots,x_{8}\right)\textrm{.}
\end{multline*}
The limit of the Weingarten function is \(2\), so the limit of the term is \(2\alpha_{\pi^{\prime}}\left(x_{1},\ldots,x_{8}\right)\).

The third diagram corresponds to
\[\pi=\left(1,2,-6\right)\left(6,-2,-1\right)\left(3,-5\right)\left(5,-3\right)\left(4,7\right)\left(-7,-4\right)\left(8\right)\left(-8\right)\]
which has
\[\pi^{\prime}=\left(1,2,6\right)\left(3,7\right)\left(4\right)\left(5,8\right)\in S_{\mathrm{ann-nc}}\left(3,5\right)\textrm{.}\]
The contribution of the moment is
\begin{multline*}
\alpha_{\pi^{\prime}}\left(x_{1},x_{2},x_{3},x_{8}^{t},x_{7}^{t},x_{6}^{t},x_{5}^{t},x_{4}^{t}\right)\\=\alpha_{3}\left(x_{1},x_{2},x_{6}^{t}\right)\alpha_{2}\left(x_{3},x_{5}^{t}\right)\alpha_{2}\left(x_{4},x_{7}\right)\alpha_{1}\left(x_{8}\right)\textrm{.}
\end{multline*}
The limit of this term is \(2\alpha_{3}\left(x_{1},x_{2},x_{6}^{t}\right)\alpha_{2}\left(x_{3},x_{5}^{t}\right)\alpha_{2}\left(x_{4},x_{7}\right)\alpha_{1}\left(x_{8}\right)\).

The fourth diagram corresponds to
\[\pi=\left(1\right)\left(-1\right)\left(2,3\right)\left(-3,-2\right)\left(4,6,7\right)\left(-7,-6,-4\right)\left(5\right)\left(-5\right)\left(8\right)\left(-8\right)\]
Because this diagram is not connected, one of \({\cal U}\) and \({\cal V}\) must connect the two cycles of \(\tau_{3,5}\).  We consider the term where the unique nontrivial block of \({\cal U}\) contains the two lighter grey cycles of \(\pi\):
\[{\cal U}=\left\{\left\{1\right\},\left\{2,3,4,6,7\right\},\left\{5\right\},\left\{8\right\}\right\}\]
while \({\cal V}=\Pi\left(\mathrm{Kr}_{\tau_{3,5}}\left(\pi\right)\right)\).  The cumulant of traces is
\begin{multline*}
k_{\left({\cal U},\pi\right)}\left(X_{1},\ldots,X_{8}\right)\\=\mathbb{E}\left(\mathrm{tr}\left(X_{1}\right)\right)\mathrm{cov}\left(\mathrm{tr}\left(X_{2}X_{3}\right),\mathrm{tr}\left(X_{4}X_{6}X_{7}\right)\right)\mathbb{E}\left(\mathrm{tr}\left(X_{5}\right)\right)\mathbb{E}\left(\mathrm{tr}\left(X_{8}\right)\right)\\=\varphi_{1}\left(x_{1}\right)\varphi_{2}\left(x_{2}x_{3},x_{4}x_{6}x_{7}\right)\varphi_{1}\left(x_{5}\right)\varphi_{1}\left(x_{8}\right)\\=\alpha_{1}\left(x_{1}\right)\alpha_{2,3}\left(x_{2},x_{3},x_{4},x_{6},x_{7}\right)\alpha_{1}\left(x_{5}\right)\alpha_{1}\left(x_{8}\right)=\alpha_{\left({\cal U},\pi\right)}\left(x_{1},\ldots,x_{8}\right)
\end{multline*}
and the limit of the Weingarten function is \(-1\), so the limit of the term is
\[-\alpha_{1}\left(x_{1}\right)\alpha_{2,3}\left(x_{2},x_{3},x_{4},x_{6},x_{7}\right)\alpha_{1}\left(x_{5}\right)\alpha_{1}\left(x_{8}\right)=-\alpha_{\left({\cal U},\pi\right)}\left(x_{1},\ldots,x_{8}\right)\textrm{.}\]
\end{example}

\begin{figure}
\label{figure: cumulant moment}
\centering
\begin{tabular}{cc}
\input{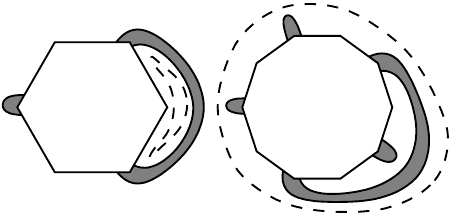_t}\\
\input{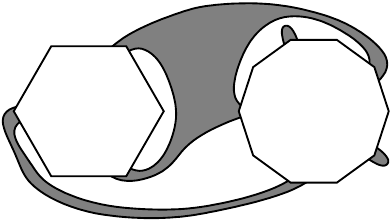_t}\\
\input{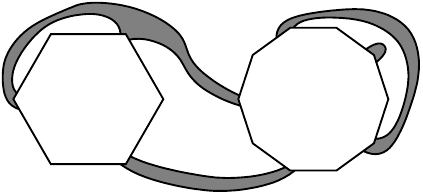_t}\\
\input{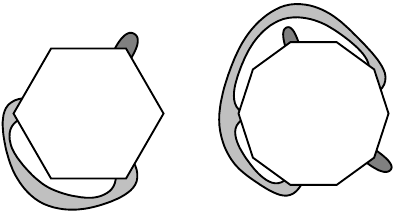_t}
\end{tabular}
\caption{Examples of the four types of diagrams contributing to the highest order of the two-vertex cumulant \(K_{3,5}\left(X_{1},\ldots,X_{8}\right)\).}
\end{figure}

We show an example of the types of terms which appear in the asymptotic expansion of a second-order moment in cumulants:
\begin{example}
\label{example: moment cumulant}
We consider random matrices \(X_{1},\ldots,X_{7}:\Omega\rightarrow M_{N\times N}\left(\mathbb{C}\right)\) with a second-order limit distribution \(x_{1},\ldots,x_{7}\in A\).  We consider the rescaled large \(N\) limit of the covariance of traces:
\begin{multline*}
\lim_{N\rightarrow\infty}N^{2}k_{2}\left(\mathrm{tr}\left(X_{1}X_{2}X_{3}X_{4}\right),\mathrm{tr}\left(X_{5}X_{6}X_{7}\right)\right)=\alpha_{4,3}\left(x_{1},x_{2},x_{3},x_{4};x_{5},x_{6},x_{7}\right)\\=\varphi_{2}\left(x_{1}x_{2}x_{3}x_{4},x_{5}x_{6}x_{7}\right)\textrm{.}
\end{multline*}
The terms in the expansion (\ref{formula: moment vertex-cumulant}) which survive as \(N\rightarrow\infty\) fall into three categories, corresponding to the three sums in (\ref{formula: moment cumulant}), represented by the three examples shown in Figure~\ref{figure: moment cumulant}.

The first diagram is a representative of the diagrams which are annular noncrossing on \(\tau_{p,q}=\left(1,2,3,4\right)\left(5,6,7\right)\).  It corresponds to
\[\pi=\left(1,3,5,6\right)\left(-6,-5,-3,-1\right)\left(2\right)\left(-2\right)\left(4,7\right)\left(-7,-4\right)\]
and \({\cal U}=\Pi\left(\pi\right)=\left\{\left\{1,3,5,6\right\},\left\{2\right\},\left\{4,7\right\}\right\}\), corresponding to term
\[K_{\left({\cal U},\pi\right)}=K_{4}\left(X_{1},X_{3},X_{5},X_{6}\right)K_{1}\left(X_{2}\right)K_{2}\left(X_{4},X_{7}\right)\textrm{.}\]
The \(N\rightarrow\infty\) limit is
\[\kappa_{4}\left(x_{1},x_{3},x_{5},x_{6}\right)\kappa_{1}\left(x_{2}\right)\kappa_{2}\left(x_{4},x_{7}\right)\\=\kappa_{\pi^{\prime}}\left(x_{1},\ldots,x_{7}\right)\]
a term in the free cumulant expansion of \(\alpha_{4,3}\left(x_{1},\ldots,x_{7}\right)\).

The second diagram is a representative of the third category of \(\pi\in S_{\mathrm{nc}}\left(p,q\right)\) in Lemma~\ref{lemma: annular premaps}.  Again, \({\cal U}=\Pi\left(\pi\right)\).  Here,
\[\pi=\left(1,3\right)\left(-3,-1\right)\left(2\right)\left(-2\right)\left(4,-6,-5\right)\left(5,6,-4\right)\left(7\right)\left(-7\right)\]
and \(\pi^{\prime}=\left(1,3\right)\left(2\right)\left(4,6,7\right)\left(5\right)\).  The permutation corresponds to term
\[K_{\left({\cal U},\pi\right)}=K_{2}\left(X_{1},X_{3}\right)K_{1}\left(X_{2}\right)K_{3}\left(X_{4},X_{6}^{T},X_{5}^{T}\right)K_{1}\left(X_{7}\right)\]
with limit
\[\kappa_{2}\left(x_{1},x_{3}\right)\kappa_{1}\left(x_{2}\right)\kappa_{3}\left(x_{4},x_{6}^{t},x_{5}^{t}\right)\kappa_{1}\left(x_{7}\right)=\kappa_{\pi^{\prime}}\left(x_{1},x_{2},x_{3},x_{4},x_{7}^{t},x_{6}^{t},x_{5}^{t}\right)\textrm{.}\]

The third term corresponds to \(\left({\cal U},\pi\right)\) with \({\cal U}=\left\{\left\{1\right\},\left\{2,4\right\},\left\{3,5,6\right\},\left\{7\right\}\right\}\) and \(\pi=\left(1\right)\left(-1\right)\left(2,4\right)\left(-4,-2\right)\left(3\right)\left(-3\right)\left(5,6\right)\left(-6,-5\right)\left(7\right)\left(-7\right)\).

This corresponds to term
\[N^{2}K_{\left({\cal U},\pi\right)}\left(X_{1},\ldots,X_{7}\right)=N^{2}K_{1}\left(X_{1}\right)K_{2}\left(X_{2},X_{4}\right)K_{1,2}\left(X_{3},X_{5},X_{6}\right)K_{1}\left(X_{7}\right)\]
which has \(N\rightarrow\infty\) limit
\[\kappa_{1}\left(x_{1}\right)\kappa_{2}\left(x_{2},x_{4}\right)\kappa_{1,2}\left(x_{3},x_{5},x_{6}\right)\kappa_{1}\left(x_{7}\right)=\kappa_{\left({\cal U},\pi^{\prime}\right)}\left(x_{1},\ldots,x_{7}\right)\textrm{.}\]
\end{example}

\begin{figure}
\label{figure: moment cumulant}
\centering
\begin{tabular}{cc}
\input{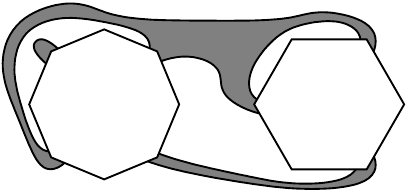_t}\\
\input{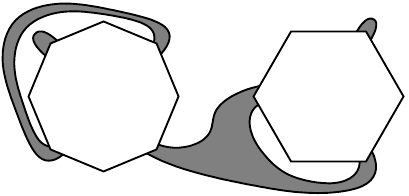_t}\\
\input{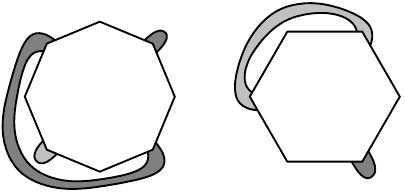_t}
\end{tabular}
\caption{Examples of the three types of diagrams contributing to the highest-order of the cumulant \(k_{2}\left(\mathrm{tr}\left(X_{1}X_{2}X_{3}X_{4}\right),\mathrm{tr}\left(X_{5}X_{6}X_{7}\right)\right)\).}
\end{figure}

\subsection{Higher-order freeness}

\label{subsection: higher-order}

\begin{proposition}
Let \(A\) be an \(m\)th order noncommutative probability space with subalgebras \(A_{1},\ldots,A_{C}\) closed under transposition.  Assume that the cumulants as defined in Proposition~\ref{proposition: higher free cumulants} which are mixed (i.e.\ are applied to elements from more than one of the algebras) vanish.

Let \(m>2\), let \(r_{1},\ldots,r_{m}>0\), and let \(n:=r_{1}+\cdots+r_{m}\).  Let \(x_{1},\ldots,x_{n}\) belong to algebras \(A_{1},\ldots,A_{C}\subseteq A\) such that \(x_{\tau_{r_{1},\ldots,r_{m}}\left(k\right)}\) is always from a different algebra from \(x_{k}\) (i.e.\ the \(x_{k}\) are cyclically alternating under \(\tau_{r_{1},\ldots,r_{m}}\)).  Then the value of \(\varphi_{m}\) applied to cyclically alternating products of centred terms can be calculated from the expansion in Proposition~\ref{proposition: limit cumulant}, excluding any terms in which \({\cal U}\) contains two terms adjacent in \(\tau\) or any singlet:
\[\alpha_{r_{1},\ldots,r_{m}}\left(x_{1},\ldots,x_{n}\right)=\sum_{\substack{\left({\cal U},\pi\right)\in\mathit{PPM}^{\prime}\left(r_{1},\ldots,r_{m}\right)\\\left.\pi\right|_{\left\{k,\tau\left(k\right)\right\}}=\left(k\right)\left(\tau\left(k\right)\right),k\in\left[n\right]\\\left\{k\right\}\neq{\cal U},k\in\left[n\right]}}\kappa_{\left({\cal U},\pi\right)}\left(x_{1},\ldots,x_{n}\right)\textrm{.}\]
\end{proposition}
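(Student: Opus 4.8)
The plan is to transcribe, for general order $m$, the argument used for the converse direction of the theorem of Section~\ref{section: freeness}: I would expand the moment in cumulants via Proposition~\ref{proposition: limit cumulant}, and then show that every summand \emph{not} indexed by the restricted set in the statement is forced to vanish, using the two standing hypotheses, namely that mixed cumulants vanish and that the elements $x_i$ are centred and cyclically alternating under $\tau:=\tau_{r_1,\ldots,r_m}$.

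Concretely, I would first write
\[\alpha_{r_1,\ldots,r_m}\left(x_1,\ldots,x_n\right)=\sum_{\left({\cal U},\pi\right)\in{\cal PPM}^{\prime}\left(n\right)}\kappa_{\left({\cal U},\pi\right)}\left(x_1,\ldots,x_n\right)\]
by Proposition~\ref{proposition: limit cumulant}. Next I would recall, from Definition~\ref{definition: premaps} and the proof of Proposition~\ref{proposition: higher free cumulants}, that $\kappa_{\left({\cal U},\pi\right)}=\prod_{U\in{\cal U}}\kappa_{\left.\pi\right|_U}$, that each factor $\kappa_{\left.\pi\right|_U}=\kappa_{\mathrm{FD}\left(\left.\pi\right|_U\right)}$ is a free cumulant whose order $\#\left(\Pi\left(\left.\pi\right|_U\right)\right)$ lies between $1$ and $m$, and --- the point I would want to spell out carefully --- that this factor takes as its arguments the entire family $\left\{x_j:j\in U\right\}$, reordered along the cycles of $\mathrm{FD}\left(\left.\pi\right|_U\right)$.

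With that in hand the vanishing is immediate. If some block of ${\cal U}$ is a singleton $\left\{k\right\}$, the corresponding factor is $\kappa_1\left(x_k\right)=\varphi_1\left(x_k\right)=0$ since $x_k$ is centred; and if some block $U\in{\cal U}$ contains a $\tau$-adjacent pair $k,\tau\left(k\right)$, then, the elements being cyclically alternating, the arguments $x_k$ and $x_{\tau\left(k\right)}$ of $\kappa_{\left.\pi\right|_U}$ lie in different subalgebras, so $\kappa_{\left.\pi\right|_U}$ is a mixed cumulant and vanishes by hypothesis. Hence only the $\left({\cal U},\pi\right)$ with no singleton block of ${\cal U}$ and no $\tau$-adjacent pair inside a block of ${\cal U}$ can contribute. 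To finish I would reconcile this with the index set written under the summation in the statement: since $\Pi\left(\pi\right)\preceq{\cal U}$, the stated condition $\left.\pi\right|_{\left\{k,\tau\left(k\right)\right\}}=\left(k\right)\left(\tau\left(k\right)\right)$ for all $k$ (no $\tau$-adjacent pair inside a block of $\pi$) is weaker than the absence of a $\tau$-adjacent pair inside a block of ${\cal U}$; the extra index pairs it admits are exactly those with some block of ${\cal U}$ carrying a $\tau$-adjacent pair, hence contribute $0$ by the previous step, so the stated sum agrees with the sum of surviving terms, which equals $\alpha_{r_1,\ldots,r_m}\left(x_1,\ldots,x_n\right)$.

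I expect the only genuinely delicate point to be the claim highlighted in the second paragraph: verifying, by an explicit unwinding of the $\mathrm{FD}$ and $\mathrm{Kr}$ bookkeeping in Definition~\ref{definition: premaps}, that \emph{every} $x_j$ with $j\in U$ really occurs as an argument of $\kappa_{\left.\pi\right|_U}$, so that the presence of a single $\tau$-adjacent pair inside $U$ already makes that factor a mixed cumulant irrespective of how $k$ and $\tau\left(k\right)$ are distributed among the cycles of $\mathrm{FD}\left(\left.\pi\right|_U\right)$ --- together with the small index-set reconciliation carried out in the last step. Everything else is a routine copy of the $m=2$ argument.
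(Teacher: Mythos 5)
Your argument is correct and is exactly the intended one: the paper's own proof of this proposition is the single line ``This follows immediately from the hypotheses,'' and your write-up merely makes explicit the two vanishing mechanisms (centred singleton blocks giving $\kappa_{1}\left(x_{k}\right)=\varphi_{1}\left(x_{k}\right)=0$, and blocks of ${\cal U}$ containing a $\tau$-adjacent pair giving mixed factors, using that the subalgebras are closed under transposition) together with the observation that the displayed index set, whose adjacency condition is placed on $\pi$ rather than on ${\cal U}$, differs from the set of genuinely surviving terms only by terms that vanish. No gaps.
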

\begin{proof}
This follows immediately from the hypotheses.
\end{proof}

The characterization of the surviving diagrams is more complicated than in the second-order case.  Some examples of possible diagrams in the third-order case are shown in Figure~\ref{figure: third-order}.  Such diagrams may have \(0\), \(1\), or \(2\) cycles of three elements (in Figure~\ref{figure: third-order}, first two diagrams, third diagram, and fourth diagram, respectively), while all other cycles must have exactly \(2\) elements.  If all cycles have \(2\) elements, and there are two cycles of \(\tau\) not connected directly by cycles of \(\pi\), then it is possible that the outside faces of those cycles of \(\tau\) are not the same cycles of \(\mathrm{Kr}_{\tau}\left(\pi\right)\) (as shown in the second diagram in Figure~\ref{figure: third-order}; see \cite{2020arXiv200309344R} for a more precise defnition of outside faces).

\begin{figure}
\label{figure: third-order}
\centering
\begin{tabular}{cc}
\input{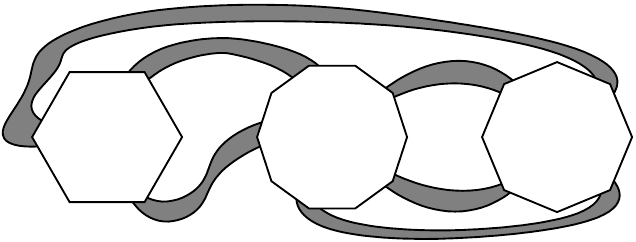_t}\\
\input{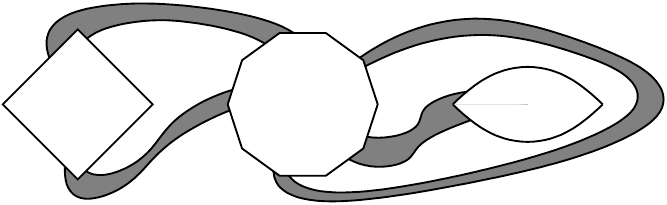_t}\\
\input{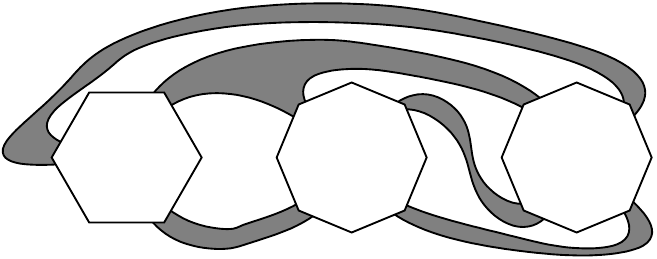_t}\\
\input{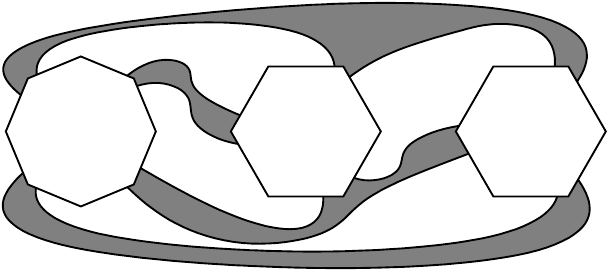_t}
\end{tabular}
\caption{Four examples of diagrams appearing in the third-order expansion.}
\end{figure}

The diagrams in Figure~\ref{figure: third-order} correspond, respectively, to terms:
\[\kappa_{2}\left(x_{1},x_{7}^{t}\right)\kappa_{2}\left(x_{2},x_{9}\right)\kappa_{2}\left(x_{3},x_{6}^{t}\right)\kappa_{2}\left(x_{4}^{t},x_{11}\right)\kappa_{2}\left(x_{5}^{t},x_{12}\right)\kappa_{2}\left(x_{8}^{t}\right)\]
\[\kappa_{2}\left(x_{1},x_{4}\right)\kappa_{2}\left(x_{2},x_{5}\right)\kappa_{2}\left(x_{3},x_{6}\right)\kappa_{2}\left(x_{7},x_{8}\right)\]
\[\kappa_{3}\left(x_{1},x_{10}^{t},x_{6}^{t}\right)\kappa_{2}\left(x_{2},x_{11}^{t}\right)\kappa_{2}\left(x_{3},x_{5}^{t}\right)\kappa_{2}\left(x_{4}^{t},x_{8}^{t}\right)\kappa_{2}\left(x_{7}^{t},x_{9}^{t}\right)\]
\[\kappa_{2}\left(x_{1},x_{6}\right)\kappa_{3}\left(x_{2},x_{10}^{t},x_{5}\right)\kappa_{2}\left(x_{3},x_{8}^{t}\right)\kappa_{3}\left(x_{4},x_{7},x_{8}^{t}\right)\textrm{.}\]

Higher-order diagrams are more complicated to characterize.  In fourth-order diagrams, the permutation need not be connected (consider a pair of spoke diagrams in which the partition connects a spoke from each diagram).  In fifth-order diagrams, it is possible for the permutation to have single-element cycles (consider a pair of spoke diagrams along with a noncrossing diagram on a single cycle whose two single-element cycles are connected by the partition to spokes in the two spoke diagrams).

\section{The quaternionic case}

\label{section: quaternion}

We give here the definition of quaternionic vertex cumulants.  Definitions of quaternionic second-order probability spaces and quaternionic second-order freeness can be found in \cite{2015arXiv150404612R}.  More details on the definition of quaternionic matrix cumulants and their use in computing random matrix quantities can be found in \cite{MR2337139, MR3455672}.  The analogous results for those given for the real case hold for the quaternionic case with appropriate changes to the proofs.

\begin{definition}
The quaternionic Weingarten function \(\mathrm{Wg}^{\mathrm{Sp}\left(N\right)}\) and its cumulants are defined in \cite{MR2217291}; more details and the definition of \(\mathrm{wg}^{\mathrm{Sp}\left(N\right)}\) are given in \cite{MR3455672}.  For large \(N\), the cumulant of the Weingarten function satisfies
\[\mathrm{wg}_{{\cal U},{\cal V}}^{\mathrm{Sp}\left(N\right)}=\gamma^{\mathbb{H}}_{{\cal U},{\cal V}}N^{-2\left(\#\left({\cal U}\right)-\#\left({\cal V}\right)\right)}+O\left(N^{-2\left(\#\left({\cal U}\right)-\#\left({\cal V}\right)\right)-1}\right)\]
where
\[\gamma^{\mathbb{H}}_{{\cal U},{\cal V}}:=\prod_{V\in{\cal V}}\left(-1\right)^{\left|V\right|-\#\left(\left.{\cal U}\right|_{V}\right)}\frac{2\left(2\left|V\right|+\#\left(\left.{\cal U}\right|_{V}\right)-3\right)!}{\left(2\left|V\right|\right)!}\prod_{U\in\left.{\cal U}\right|_{V}}\frac{\left(2\left|U\right|-1\right)!}{\left(\left|U\right|-1\right)!^{2}}\textrm{.}\]
\end{definition}

\begin{definition}
The quaternionic matrix cumulants are given by:
\begin{multline*}
c^{\mathrm{Sp}\left(N\right)}_{\rho}\left(X_{1},\ldots,X_{n}\right)\\:=\sum_{\pi\in\mathrm{PM}\left(n\right)}\left(2N\right)^{\chi_{\rho}\left(\pi\right)-2\#\left(\Pi\left(\rho\right)\right)}\mathrm{wg}^{\mathrm{Sp}\left(N\right)}\left(\Pi\left(\mathrm{Kr}_{\rho}\left(\pi\right)\right)\right)\mathbb{E}\left(\left(\mathrm{Re}\circ\mathrm{tr}\right)_{\pi}\left(X_{1},\ldots,X_{n}\right)\right)\textrm{.}
\end{multline*}
The quaternionic vertex cumulants are given by
\[K^{\mathrm{Sp}\left(N\right)}_{\left({\cal U},\pi\right)}\left(X_{1},\ldots,X_{n}\right)=\sum_{\substack{{\cal V}\in{\cal P}\left(n\right)\\{\cal V}\succeq\Pi\left(\pi\right)}}\mu_{{\cal P}}\left({\cal V},1\right)c^{\mathrm{Sp}\left(N\right)}_{\left({\cal V},\pi\right)}\left(X_{1},\ldots,X_{n}\right)\textrm{.}\]
Quaternionic higher-order free cumulants are given by
\begin{multline*}
\kappa^{\mathbb{H}}_{r_{1},\ldots,r_{m}}\left(x_{1},\ldots,x_{n}\right)\\:=\sum_{\substack{\left({\cal U},\pi\right)\in\mathit{PPM}^{\prime}\left(\tau\right)\\{\cal V}\succeq\Pi\left(\mathrm{Kr}_{\tau}\left(\pi\right)\right),{\cal V}\in\Gamma\left(\Pi\left(\tau\right)\vee{\cal U},\Pi\left(\mathrm{Kr}_{\tau}\left(\pi\right)\right)\right)\\\Pi\left(\tau\right)\vee{\cal U}\vee{\cal V}=1}}\gamma^{\mathbb{H}}_{\Pi\left(\mathrm{Kr}_{\tau}\left(\pi\right)\right),{\cal V}}\alpha_{\left({\cal U},\pi\right)}\left(x_{1},\ldots,x_{n}\right)\textrm{.}
\end{multline*}
\end{definition}

\bibliography{paper}
\bibliographystyle{plain}

\end{document}